\newtheorem{theorem}{Theorem}[section]
\newtheorem{lemma}[theorem]{Lemma}
\newtheorem{hypothesis}[theorem]{Hypothesis}
\newtheorem{remark}[theorem]{Remark}
\newcommand{\rd}{\mathrm{d}}
\newcommand{\p}{\varphi}
\newcommand{\RR}{\mathbb{R}}
\newcommand{\ve}{\varepsilon}
\newcommand*{\e}{\mathop{}\!\mathrm{e}}
\title{On the kinetic description of the objective molecular dynamics}
\author[R. D. James]{Richard D. James $^{\ddagger}$}
	\address{$^{\ddagger}$Department of Aerospace Engineering and Mechanics, University of Minnesota--Twin Cities, Minneapolis, MN 55455 USA. \href{mailto:james@umn.edu}{james@umn.edu} }
\author[K. Qi]{Kunlun Qi $^{\ddagger}$}
	\address{$^{\ddagger}$School of Mathematics, University of Minnesota--Twin Cities, Minneapolis, MN 55455 USA. \href{mailto:kqi@umn.edu}{kqi@umn.edu} }
\author[L. Wang]{Li Wang $^{\ddagger}$}
	\address{$^{\ddagger}$School of Mathematics, University of Minnesota--Twin Cities, Minneapolis, MN 55455 USA. \href{mailto:liwang@umn.edu}{liwang@umn.edu} }
\begin{document}

\begin{abstract}
    In this paper, we develop a multiscale hierarchy framework for objective molecular dynamics (OMD), a reduced order molecular dynamics with a certain symmetry, that connects it to the statistical kinetic equation, and the macroscopic hydrodynamic model. In the mesoscopic regime, we exploit two interaction scalings that lead, respectively, to either a mean-field type or to a Boltzmann-type equation. It turns out that, under the special symmetry of OMD, the mean-field scaling leads to a substantially simplified Vlasov equation that extinguishes the underlying molecular interaction rule, whereas the Boltzmann scaling yields a meaningful reduced model called the homo-energetic Boltzmann equation. At the macroscopic level, we derive the corresponding Euler and Navier-Stokes systems by conducting a detailed asymptotic analysis. The symmetry again significantly reduces the complexity of the resulting hydrodynamic systems.  
\end{abstract}

\maketitle

\tableofcontents
 
\section{Introduction}

\subsection{Objective molecular dynamics}

    Molecular Dynamics (MD) has been the building block for many physical and biological systems. However, even with the modern computational capacity, it is still onerous to simulate a large-scale molecular system. 
    Motivated by the observation that classical molecular dynamics (MD) with certain symmetric properties \cite{DJ2010_OMD} evolves within a smaller manifold, Objective Molecular Dynamics (OMD) aims to leverage the symmetry and invariance of atomic forces (e.g., \eqref{frame}-\eqref{permutation}). OMD can be seen as a specialized form of MD that significantly reduces the computational cost compared to conventional MD.
    It has been applied to the failure of carbon nanotubes under stretching \cite{DJ2010_OMD}, fluid flows with phase transformation \cite{PSJ_2023}, hypersonic flows \cite{pahlani2023objective3} and dislocation 
    motion in crystals \cite{pahlani2023objective1}. 
    
    To explain the idea, consider a structure consisting of $M$ molecules and each molecule consists of $N$ atoms, denoted as 
	\begin{align*}
	    \mathcal{S} := \{x_{i,k} \in \RR^3 : i = 1, \cdots, M, ~ k = 1, \cdots, N \}\,,
	\end{align*}
    where $x_{i,k}$ is the position of atom $k$ in molecule $i$. Then this structure is said to be an {\it objective molecular structure} if 
	\begin{align*}
	    \{ x_{i,k} + Q_{i,k} (x_{j,l}-x_{1,k}): j = 1, \cdots, M,~ l= 1, \cdots, N \} = \mathcal{S}, \qquad \text{for}~ i = 1, \cdots, M, ~ k = 1, \cdots, N.
	\end{align*}
    Here $Q_{i,k} \in O(3)$, where $O(3)$ is the orthogonal group. 
    Putting into words, this requirement means that atom $k$ in molecule $i$ (denoted as atom $(i,k)$ from here on), after re-orientation, sees exactly the same environment as atom $k$ in molecule $1$. Such a property is shared by many crystal structures and alloys. 
	
    The objective molecular structure, along with some invariances in the interactions between molecules, generates an invariant manifold of molecular dynamics. In particular, suppose that the force on atom $(i,k)$ is given by 
	\begin{align*}
	    f_{i,k} (\cdots, x_{j,1}, x_{j,2}, \cdots, x_{j,N}, x_{j+1,1}, x_{j+1,2}, \cdots, x_{j+1,N}, \cdots)\,,
	\end{align*}
	then it is subject to two fundamental invariances: 
	\begin{itemize}
	    \item[i)] Frame indifference. For $Q \in O(3)$, $c \in \RR^3$, 
	    \begin{equation}\label{frame}
        \begin{split}
         & f_{i,k} (\cdots, Qx_{j,1}+c,  \cdots, Qx_{j,N}+c, Qx_{j+1,1}+c,  \cdots, Qx_{j+1,N}+c, \cdots)
	        \\& \quad = Q f_{i,k} (\cdots, x_{j,1},  \cdots, x_{j,N}, x_{j+1,1}, \cdots, x_{j+1,N}, \cdots)\,.
        \end{split}
	    \end{equation}
	    \item[ii)] Permutation invariance. For all permutations $\Pi$, 
	    \begin{equation}\label{permutation}
        \begin{split}
          & f_{i,k} (\cdots, x_{\Pi(j,1)},  \cdots, x_{\Pi(j,N)}, x_{\Pi(j+1,1)},  \cdots, x_{\Pi(j+1,N)}, \cdots)
	        \\& \quad = f_{\Pi(i,k)} (\cdots, x_{j,1},  \cdots, x_{j,N}, x_{j+1,1}, \cdots, x_{j+1,N}, \cdots)\,.
        \end{split} 
	    \end{equation}
	\end{itemize}
	
	Denoting the isometry group in the affine space $\RR^3$ as
	\[
	(Q\,|\,c): \quad Q \in O(3), ~ c \in \RR^3
	\]
	with the product $(Q_1\,|\,c_1)(Q_2\,|\,c_2) = (Q_1 Q_2\,|\,c_1 + Q_1 c_2)$ and inverse $(Q\,|\,c)^{-1} = (Q^\top|-Q^\top c)$. Then its action on  $\RR^n$ can be written as 
	\begin{align*}
	    g(x) = Q x + c, \quad x \in \RR^3\,,
	\end{align*}
    or $g =: (Q\,|\,c)$ for short. Now if we assume $Q_1$, $\cdots$, $Q_{M}$ to be constant matrices while allow $c_1$, $\cdots$, $c_{M}$ to have the following time dependence 
    \begin{align*}
        c_i(t) = a_i t + b_i, \quad a_i, b_i \in \RR^3\,, \quad i = 1, \cdots, M\,,
    \end{align*}
    then it is obvious that for any $x(t)$, we have
    \begin{align*}
        \frac{\rd^2}{\rd t^2} g_i(x(t))= \frac{d^2}{\rd t^2} (Q_i x(t) + c_i(t))
        = Q_i \frac{\rd^2 x (t)}{\rd t^2}\,,
    \end{align*}
    which, together with the invariance above, implies the existence of a time-dependent invariant manifold of equations of molecular dynamics. 
    
    Building upon such an invariant manifold, the OMD works as follows. It divides $MN$ atoms into $N$ simulated particles---denoted as $(1,1),~ \cdots ,~ (1,N)$, conceptually the atoms in molecule $1$; and $(M-1)N$ non-simulated particles---denoted as $(2,1),~ \cdots ,~ (2,N), ~\cdots, (M,1), ~\cdots, ~(M, N)$, atoms in molecules $2, \cdots M$. Their positions have the relation
    \begin{align} \label{OMD-2}
        x_{i,k}(t) = g_i \big(x_{1,k}(t),t\big)\,, ~ g_i = \big(Q_i \,|\, c_i(t)\big),\qquad  i = 1, \cdots, M, \ k = 1, \cdots, N\,,
    \end{align}
    and $g_1 = \text{id}$. Then the simulated particles move according to the following rule:
    \begin{align} \label{OMD-1}
        m_{k} \frac{\rd^2}{\rd t^2} x_{1,k} = f_{1,k} (\cdots, x_{j,1}, x_{j,2}, \cdots, x_{j,N}, x_{j+1,1}, x_{j+1,2}, \cdots, x_{j+1,N}, \cdots)\,,
    \end{align}
    whereas the non-simulated particles update directly via \eqref{OMD-2}. The basic theorem of OMD states that each non-simulated atom satisfies exactly the equations of molecular dynamics for its forces. This way, the total number of degrees of freedom is substantially reduced and therefore leads to a much more efficient computational method. That is, if a cut-off for the atomic forces is introduced, only the simulated atoms, together with the non-simulated atoms within the cut-off, need to be tracked.
    Despite the positions of the non-simulated atoms being given by explicit formulas, the overall motion is typically highly chaotic.
	

With the number of particles getting large, a coarse-grained model, termed as a kinetic equation, is introduced to give a statistical description of the collective behavior of the many-particle system.

	 
\subsection{Motivation and previous results}

There is now a full-fledged theory on the derivation of the kinetic and hydrodynamic equations. 
 This theory focuses on particle interactions within a classically {\it unstructured} background. In this paper, our primary objective is to establish a multi-scale framework that emphasizes symmetry. Since OMD represents an invariant manifold of MD, it is important to know whether this manifold is in some sense inherited in reduced-order kinetic equations.  We aim to establish such a systematic connection to reduced-order kinetic equations and their corresponding macroscopic models.

\begin{figure}[ht]
	    \centering
	    \includegraphics[width=0.8\textwidth]{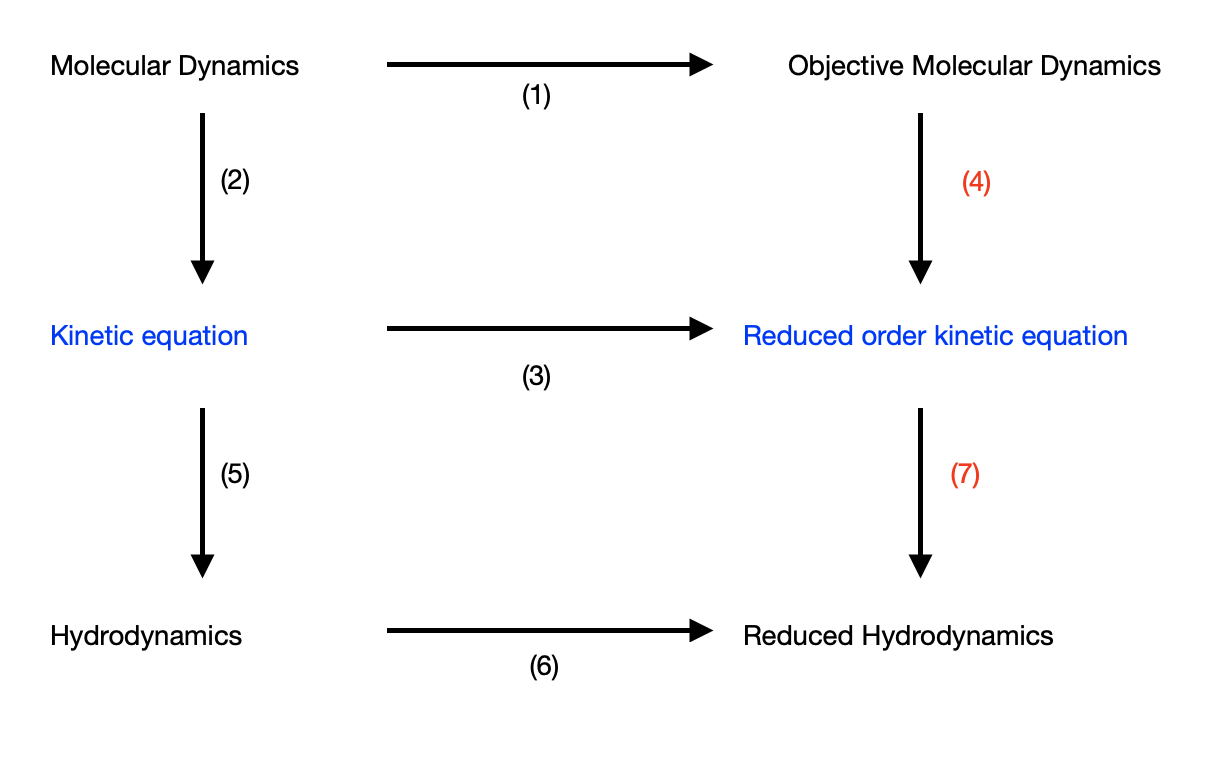}
	    \caption{Connections between different models in the multi-scale hierarchy.}
	    \label{fig:diagram}
\end{figure}

 To provide a more compelling representation of our motivation and results, we use Fig.~\ref{fig:diagram} for an illustration.
 
\begin{itemize}
    \item \textbf{Arrow (1)} has been explained above, see also \cite{JNV2019, James_ICM}.\\
    
    \item \textbf{Arrow (2)} is a well-established relationship in kinetic theory. Instead of tracking the detailed motions of each molecule in the dynamic system, which is computationally impractical due to the enormous number of particles, the kinetic equation allows us to analyze the system's behavior without considering individual particle motions.

   To achieve this, the BBGKY hierarchy (from the names of Bogolyubov, Born, Green, Kirkwood, Yvon) has been proven to be a useful methodology \cite{CIP}. Additionally, suitable scaling limits are employed to capture the essential properties of the microscopic regime. Two typical scalings, the \textit{mean-field limit} and the \textit{Boltzmann-Grad limit}, have led to the two different types of kinetic equations. 
   
   The mean-field limit, stemming from \cite{Braun_Hepp_1977}, assumes that the force on one particle is influenced by the entire range of other particles, although the strength of interaction weakens as the number of particles $N$ increases. As $N$ approaches infinity, a Mean-field/Vlasov-type equation emerges, where the particle distribution depends on its phase space density. For a comprehensive review on this topic, refer to \cite{Spohn2012large, Golse2003}.

    On the other hand, the Boltzmann-Grad limit arises when the particles are diluted enough that only binary interactions play a significant role, and each particle experiences a single collision within a given unit of time \cite{Villani02}.
    In this case, the Boltzmann equation is formally derived by Grad and Cercignani \cite{Grad1949, Grad1958, Cercignani1972}, with rigorous validation by Lanford \cite{Lanford1975} for the hard-sphere model over short times. Extensive studies have been conducted on smooth short-range potentials \cite{King1975, GST2013, PSS2014}, and for a recent review, see \cite{PS2021}.\\

    \item \textbf{Arrow (3)} has been discussed in \cite{DJ2010_OMD, James_ICM}, either via heuristic argument of symmetry in statistical physics language, or by looking for a special ansatz of solution that reduces the equation. 

    To be more specific, recall the classical Boltzmann equation:
	\begin{equation}\label{BE}
		\partial_{t} f(t,x,v) +v\cdot \nabla_x f(t,x,v)=\mathcal{Q}(f,f)(t,x,v), \quad t>0,  \ x\in\mathbb{R}^3, \ v\in\mathbb{R}^3,
	\end{equation}
    where
	\begin{equation}\label{Q}
	\mathcal{Q}(f,f)(t,x,v)=\int_{\mathbb{R}^3}\int_{\mathbb{S}^{2}} B(v-v_*,\sigma)\left[f(t,x,v')f(t,x,v_*')-f(t,x,v)f(t,x,v_*)\right] \,\rd\sigma \,\rd v_*\,,
	\end{equation}
    with the collision kernel $B(v-v_*,\sigma)$ that describes the intensity of collisions. Usually, $B$ can be separated as the kinetic part $\Phi$ and angular part $b$ in the case of the inverse power law:
    \begin{equation*}
		B(|v-v_*|,\sigma) = b(\cos\theta) \Phi(|v-v_*|), \quad \text{with} \ \cos\theta=\sigma \cdot \frac{v-v_*}{|v-v_*|},
    \end{equation*} 
    where kinetic collision part $\Phi(|v-v_*|)=|v-v_*|^{\gamma}$ includes hard potential $ (\gamma>0) $, Maxwellian molecule $ (\gamma =0) $ and soft potential $ (\gamma<0) $, and angular part $b(\cos\theta)$ is often regarded to satisfy the Grad's cutoff assumption, i.e.,
    $\int_{\mathbb{S}^{2}} b(\cos\theta) \,\rd \sigma < \infty$,
    see more details of the collision kernel $B$ in \cite{Villani02}. Here $(v', v'_*)$ and $(v, v_*)$ represent the velocity pairs before and after the collision, respectively. They satisfy the conservation of momentum and energy:
	\begin{equation*} 
		v' + v_{*}' = v + v_{*}, \quad  |v'|^{2} + |v_{*}'|^{2} = |v|^{2} + |v_{*}|^{2}.
	\end{equation*}
    This allows us to express $(v', v_*')$ in terms of $(v, v_*)$ using the following equations:
    \begin{equation*}
		v'=\frac{v+v_*}{2}+\frac{|v-v_*|}{2}\sigma, \quad  v_*'=\frac{v+v_*}{2}-\frac{|v-v_*|}{2}\sigma\,.
    \end{equation*}

    Now translating the OMD symmetry in kinetic language, it means that \cite[p.~155]{DJ2010_OMD} ``the probability of finding a velocity of the form $v + A(I+tA)^{-1}x$ at $x$ is the same as the probability of finding a velocity $v$ at $0$". Putting the words into a formula, we have 
    \begin{equation*}
        f(t,x,v+A(I+tA)^{-1}x) = f(t,0,v)\,,
    \end{equation*}
    which is equivalent to 
    \begin{equation}\label{vw}
        f(t,x,v) = f(t,0, v-A(I+tA)^{-1}x) =: g(t,w) \quad \text{with} \quad w = v - A(I+At)^{-1}x\,.
    \end{equation}
    Here  $A$ is an assignable 3x3 matrix.
    Then $g$, which depends on fewer variables, satisfies the homo-energetic Boltzmann equation, a reduced order model originally introduced by Galkin \cite{Galkin1958} and Truesdell \cite{Truesdell1956}: 
        \begin{equation}\label{g}
	\begin{split}
	\partial_{t} g(t,w) -  [A(I+tA)^{-1} w] \cdot \nabla_{w} g(t,w) = \mathcal{Q}(g,g)(t,w).
	\end{split}
    \end{equation}

   An alternative approach involves seeking the equi-dispersive solution to Eq.~\eqref{BE} \cite{JNV2019}. In other words, we look for the solution ansatz: if $f$ is the solution to Eq.~\eqref{BE} and
    \begin{equation}\label{ansatz}
        f(t,x,v)= g(t,w) \quad \text{with} \quad  w = v- \xi(t,x)\,,
    \end{equation}
    then $g$ satisfies 
    \begin{equation}\label{gg1}
        \partial_{t} g(t,w) - [\partial_t \xi + \xi\cdot \nabla_x \xi]\nabla_{w} g(t,w) - [(\nabla_x \xi) w] \cdot \nabla_{w} g(t,w) = \mathcal{Q}(g,g)(t,w).
    \end{equation}
    Clearly, by a direct calculation, when $\xi(t,x)$ is an affine function on $x$ such that
    \begin{equation}\label{xi}
		\xi(t,x) = L(t)x, \quad \text{with} \quad L(t):= A(I+At)^{-1}x,
	\end{equation}
    the Eq.~\eqref{gg1} can be reduced to Eq.~\eqref{g}.\\

    \item \textbf{Arrow (5)} is the process that leads from kinetic equations in the mesoscopic regime to continuum equations in the macroscopic regime. This concept can be traced back to Maxwell and Boltzmann, who initially founded the kinetic theory. The study of the hydrodynamic limit was subsequently formulated and addressed by Hilbert \cite{Hilbert1912}. It aims to derive the fluid dynamic system as particles undergo an increasing number of collisions, causing the Knudsen number to approach zero.
    
    The classical compressible Euler and Navier-Stokes equations can be formally derived from the scaled Boltzmann equation through the Hilbert \cite{Hilbert1912} and Chapman-Enskog expansions \cite{CC, enskog1917}. The asymptotic convergence of these derivations was rigorously justified by Caflish \cite{Caflisch1980} for the compressible Euler equations and by De Masi, Esposito, and Lebowitz \cite{MEL1989} for the incompressible Navier-Stokes equations.

    Another aspect of studying the hydrodynamic limit pertains to weak solutions, particularly proving that the renormalized solution of the Boltzmann equation converges to the weak solution of the Euler or Navier-Stokes equations. This has been partially achieved for incompressible models \cite{BGL1993, LM2001, Saint-Raymond2003, GS2004, GS2009, LM2010, JM2017}.

    Additionally, research on strong solutions near equilibrium is another avenue of exploration in the hydrodynamic limit. Nishida \cite{Nishida1978} established local-in-time convergence to the compressible Euler equations, while Bardos and Ukai \cite{BU1991}, as well as more recent work by Gallagher and Tristani \cite{GT2020}, derived solutions for the incompressible Navier-Stokes equations. For a comprehensive review of this topic, we refer to \cite{Cercignani00, Golse2003, Saint-Raymond_09} and the references cited therein.

    The combined transitions (2)+(5) form the central framework of Hilbert's Sixth Problem, aiming to establish a comprehensive depiction of gas dynamics across all levels of description. The objective is to comprehend macroscopic concepts such as viscosity and nonlinearity from a microscopic standpoint \cite{Saint-Raymond_09}.\\

    \item \textbf{Arrow (6)} is a heuristic derivation in the case of the macroscopic motion corresponding to an OMD simulation \cite{DJ2010_OMD, PSJ_2022}. The macroscopic velocity field of such a motion is
    $u(x,t) = A(I+ tA)^{-1}x$ and, by direct
    substitution, this is an exact solution of the compressible or incompressible Navier-Stokes equations. For the latter
    add the restriction of incompressibility, $\text{Tr}[(A(I + tA)^{-1}] =0$.  If thermodynamics is included with the former, the energy equation becomes an ODE for the temperature. 
    
\end{itemize}

As mentioned earlier, our primary focus is on the completion of the diagram by establishing connections (4) and (7). The challenges include:
\begin{itemize}
    \item[1)]  In the context of OMD, there exist two sets of particles: simulated and unsimulated. The simulated particles are updated based on Newton's second law, while the unsimulated ones undergo updates through a "copy and paste" mechanism. This fundamental distinction is the primary factor contributing to the significant speedup achieved by OMD. However, when deriving the corresponding kinetic system (i.e. route (4)), a crucial question arises: should these two sets of particles be treated differently? Using the previous notation $x_{i,k}$ with $i = 1$ and $k = 1, \cdots, N$ representing the simulated particles, and $i = 2, \cdots, M$ and $k = 1, \cdots, N$ representing the non-simulated particles, the question arises: should we allow $M$ to approach $\infty$ first, or $N$, or both simultaneously? Furthermore, is there a particular relationship between $N$ and $M$ that is crucial to achieving a meaningful limit?

   \item[2)] In theory, two scalings can be applied in route (4): mean-field scaling or Boltzmann-Grad scaling. 
   Conceptually, both mean-field scaling and Boltzmann-Grad scaling make sense in deriving the mesoscopic (a.k.a. kinetic) models from microscopic particle dynamics,  but they emphasize different kinds of interactions at the particle level:  
\begin{itemize}
    \item The mean-field limit highlights the long-range interactions between particles by assigning each particle an equal weight of influence, denoted as $\frac{1}{N}$, on any given particle.  As a result, a non-local (in $x$) model is anticipated in the mean-field limit; 
    \item The Boltzmann-Grad limit, on the other hand, emphasizes the local interaction. The rescaling of the interaction from $\nabla U(r)$ to $ \frac{1}{\varepsilon} \nabla U\left(\frac{r}{\varepsilon}\right)$ indeed implies that each particle is unaffected by others unless those particles fall within its influence range, the $\varepsilon$-neighborhood.
    \end{itemize} 
   However, in our work, due to the intrinsic symmetry and invariance of the potential in OMD, the expected nonlocal term accounting for interactions among particles in the mean-field limit vanishes. This results in an oversimplified model, as seen in Eq.~\eqref{PR}. On the contrary,  the interactions in the Boltzmann-Grad scaling can be retained, which then leads to the more physically reasonable homo-energetic Boltzmann equation. This circumstance is unexpected until one delves into the derivation process.

   \item[3)] Route (7) that we aim to establish is a lot more formal compared to route (5)+(6), which is accomplished only in a heuristic manner \cite{DJ2010_OMD, PSJ_2022}. This undertaking is challenging compared to the classical Boltzmann equation, due to the intricate handling required for the hyperbolic term on the left-hand side of the homo-energetic equation.
\end{itemize}

We want to emphasize that, despite obtaining seemingly similar results as one could from another route, namely (4) = (2) + (3) and (7) = (5) +(6), this equivalence is far from obvious. Assessing whether symmetry at the OMD level can be maintained at the kinetic and hydrodynamic levels is a non-trivial task. Unlike previous approaches where (2) + (3) or (5) + (7) is {\it assumed} to preserve such symmetry, our approach adopts a more formal derivation without making such assumptions. We systematically investigate how this symmetry is retained throughout the derivation.


	
\subsection{Basic set-up and our results}

Prior to discussing the kinetic formulation, we first lay out some preliminaries. As mentioned before, we denote $(x_{1,k}(t) , v_{1,k}(t))$ as the simulated particles' location and velocity, and $(x_{i,k}(t) , v_{i,k}(t) )$ for the associated non-simulated particles, $i = 2, \cdots, M$. In general terms, their relation is formalized by Eq.~\eqref{OMD-2}. Throughout this paper, we consider the simplest case of the time-dependent translation group \cite{James_ICM} in which case $Q_i = I$ and $c_i(t) = \sum_{l=1}^3 \nu_i^l(b_l + a_l t) = (I+tA) \nu_i$, where $\nu_i = (\nu_i^1, \nu_i^2, \nu_i^3) \in \mathbb Z^3,\, b_l=e_l, \,  a_l = A e_l$ for orthonormal basis $e_l$ and $i = 1, \cdots, M$, then the relation \eqref{OMD-2} reduces to 
\begin{align}\label{OMD-s}
	x_{i,k}(t) = x_{1,k}(t) + (I + tA) \nu_i\,.
\end{align}
Therefore,
 \begin{align*}
     v_{i,k}(t) = v_{1,k}(t) + A \nu_i\,,
 \end{align*}
 which immediately leads to 
 \begin{align*}
     v_{i,k}(t) - v_{1,k}(t) = A (I + t A)^{-1}(x_{i,k}(t) - x_{1,k}(t)) \,.
 \end{align*}
Consequently, define the transformation  
	\begin{align*}
	 w(t) := v(t) - A(I+tA)^{-1}x(t) \,,
	\end{align*}
     then a very important observation is that the simulated and non-simulated particles will be {\it indistinguishable} if written in terms of new variables $w_{1,k}$ and $w_{i,k}$:
	\begin{equation}\label{ww2}
		w_{i,k}(t) = v_{i,k}(t) - A(I+tA)^{-1}x_{i,k}(t) = v_{1,k}(t) - A(I+tA)^{-1}x_{1,k}(t) = w_{1,k}(t)\,,
	\end{equation}
 for $i = 1, \cdots, M$ and $k = 1, \cdots, N$.
 Therefore, for the brevity of notation, we will use the one single customary subscript $i = 1, \cdots, N$ to index all particles throughout the rest of the paper.
\footnote{Note that when $Q_i \neq I$, \eqref{OMD-s} becomes
\begin{align*}
	x_{i,k}(t) = Q_i x_{1,k}(t) + (I + tA) \nu_i\,,
\end{align*}
which then leads to the relation
\begin{align*}
  v_{i,k}(t) - A(I+tA)^{-1}x_{i,k}(t) = Q_iv_{1,k}(t) - A(I+tA)^{-1} Q_ix_{1,k}(t)  \,. 
\end{align*}
To proceed, it is essential to identify a suitable change of variable that unifies the dynamics of simulated and non-simulated particles. In this context, further investigation into the properties of $Q$ and $A$ and their relationship is required. We defer this exploration to future work.} Note that in the following derivation of kinetic limit, the number of atoms $N$ essentially goes to infinity, which leads the total number of particles $MN$ in the system to infinity as well.

    Since
	\begin{equation*}
	    w_{i}(t) : =v_{i}(t) - A(I+tA)^{-1}x_{i}(t)
	\end{equation*}
	and 
    \begin{equation*}
        \dot{x}_{i}(t) = v_i(t) = w_{i}(t) + A(I+tA)^{-1}x_{i}(t)\,,
    \end{equation*}
    a direct calculation shows that 
	\begin{equation*}
		\begin{split}
			\frac{\rd }{\rd t} [A(I+tA)^{-1}x_{i}(t)] =& A(I+tA)^{-1}[\dot{x}_{i}(t) - A(I+tA)^{-1}x_{i}(t)]\\
			=& A(I+tA)^{-1}[v_{i}(t) - A(I+tA)^{-1}x_{i}(t)]\\
			=& A(I+tA)^{-1} w_{i}(t)\,,
		\end{split}
	\end{equation*}
	and therefore 
	\begin{equation}\label{dw2}
	\begin{split}
	    \dot{w}_{i}(t) =& \dot{v}_{i}(t) - A(I+tA)^{-1} w_{i}(t)\\
	    =& - \sum_{\substack{j=1\\j \neq i}}^{N} \nabla_{x_i}U(|x_{i}(t) - x_{j}(t)|) - A(I+tA)^{-1} w_{i}(t)\,,
	\end{split}
	\end{equation}
	where we have used a specific form of the force:
    \begin{align} \label{U}
	f_{1,i} (\cdots, x_{j,1}, x_{j,2}, \cdots, x_{j+1,N}, \cdots) = - \sum_{j = 1}^N  \nabla U_{x_i}(|x_{j} - x_{i}|)\,.
	\end{align}

\begin{remark}
    It is important to note that while the radial condition of the potential function $f_{1,i}$ may not be necessary for OMD at the microscopic level (some symmetric condition such as permutation invariance Eq.~\eqref{permutation} is indispensable), we will focus exclusively on radial potential Eq.~\eqref{U} when deriving the kinetic equation for the remainder of this paper. This choice is mainly due to technical reasons. 
    
    More specifically, in Section \ref{sub:rigorous}, where we derive the mean-field limit, the radial potential is necessary for proving the well-posedness of the mean-field equation (Eq.~\eqref{P1UR}), following \cite{CCR2011}. Additionally, our derivation of the homo-energetic Boltzmann equation in Section \ref{sec:boltzmann} closely follows the classical approach using the Boltzmann-Grad limit described in \cite{GST2013}, where the radial condition is also required \cite[Assumption 1.2.1]{GST2013}. 
    
    Nevertheless, we emphasize that radial potentials are prevalent in many real-world applications across physics, biology, and materials science, radial potentials are ubiquitous. Examples include the well-known inverse power law \cite{Villani02} and Lennard-Jones potential \cite{PSJ_2022, PSJ_2023}. While non-radial interactions, such as those used to model flocking behavior \cite{motsch2011new}, have also been explored in the literature, these cases and interactions involving more than two particles will be addressed in future work.
\end{remark}

Finally, the dynamical system of OMD satisfied by the new variables $(x_i(t), w_i(t))$ is summarized as follows: for $i = 1,2,...,N$,
	\begin{equation}\label{simu_dynamic_xw}
		\left\{
		\begin{aligned}
			& \dot{x}_{i}(t) =
			w_{i}(t) + A(I+tA)^{-1}x_{i}(t),\\[6pt]
			& \dot{w}_{i}(t)
	    = - \sum_{\substack{j=1\\j \neq i}}^{N} \nabla_{x_i}U\left(\left|x_{i}(t) - x_{j}(t)\right|\right) - A(I+tA)^{-1} w_{i}(t).
		\end{aligned}
		\right.
	\end{equation}
 
\noindent\textbf{Result 1: From the microscopic regime to mesoscopic regime (Arrow (4))}\\
 
To obtain the corresponding kinetic equation from the fundamental dynamic system Eqs.~\eqref{simu_dynamic_xw}, it is crucial to apply an appropriate scaling operation. We follow the two classical scalings as follows: 
     \begin{itemize}
         \item \textbf{Mean-field type model}: it assumes that the contribution from each particle has the same weight $1/N$:
           \begin{equation}\label{simu_dynamic_xw_1N}
		\left\{
		\begin{aligned}
			& \dot{x}_{i}(t) =
			w_{i}(t) + A(I+tA)^{-1}x_{i}(t),\\[6pt]
			& \dot{w}_{i}(t)
	    = - \frac{1}{N}\sum_{\substack{j=1\\j \neq i}}^{N} \nabla_{x_i}U\left(\left|x_{i}(t) - x_{j}(t)\right|\right) - A(I+tA)^{-1} w_{i}(t)\,.
		\end{aligned}
		\right.
    \end{equation}
    
    By taking $N \rightarrow \infty$, we will obtain the mean-field type equation 
    \begin{multline}\label{P1UR}
        \frac{\partial g(t,x,w)}{\partial t} + w\cdot \nabla_{x}g(t,x,w) + [A(I+tA)^{-1} x] \cdot \nabla_{x} g(t,x,w) \\ -  [A(I+tA)^{-1} w] \cdot \nabla_{w} g(t,x,w) 
	=   \left[\nabla_{x}U*\rho_{g}\right](t,x) \cdot \nabla_{w}g(t,x,w),
    \end{multline}
    where $\rho_{g}(t,x) = \int_{\RR^{d}} g(t,x,w) \rd w$, and for the sake of rigorously justifying the limit, the potential $U \in C^1$ is assumed that $\nabla U$ is locally Lipschitz and $|\nabla U(x)| \leq C (1+|x|)$ for some constant $C>0$. Note that these requirements on $U$ are primarily for technical reasons, as they will be used  to prove the well-posedness of the mean-field type equation derived in Section \ref{sub:rigorous}.

    If further considering the homogeneity of $g$ in the space variable $x$ (cf.~ Eq.~\eqref{vw}), the Eq.~\eqref{P1UR} can be reduced to
    \begin{equation}\label{PR}
	\frac{\partial g(t,w)}{\partial t} - [A(I+tA)^{-1}w]\cdot \nabla_w g(t,w) = 0.
    \end{equation}

    The well-posedness of Eq.~\eqref{P1UR} and Eq.~\eqref{PR} has been established in Theorem~\ref{well-posed_P1UR}, and Theorem~\ref{conv} provides a rigorous connection between Eqs.~\eqref{simu_dynamic_xw_1N} and Eq.~\eqref{PR}.\\

    \item  \textbf{Boltzmann type model}: it emphasizes close neighbor interaction by rescaling the strength and range of the potential term from $\nabla U(r)$ to $\frac{1}{\varepsilon} \nabla U(\frac{r}{\varepsilon})$, i.e.,
    to derive the Boltzmann type model, for any given potential-induced force term $\nabla U(r)$, we rescale it by involving the parameter $\varepsilon$ in the following way: we add the factor $\frac{1}{\varepsilon}$ in the front to scale the strength and also add another $\frac{1}{\varepsilon}$ in $U$ to make it $\nabla U\left(\frac{r}{\varepsilon}\right)$ to scale the range of the potential:
    \begin{equation}\label{simu_dynamic_xwG}
	 \left\{
	 \begin{aligned}
	 	& \dot{x}_{i}(t) =
			w_{i}(t) + A(I+tA)^{-1}x_{i}(t),\\[6pt]
	 	& \dot{w}_{i}(t)
	     = - \frac{1}{\varepsilon} \sum_{\substack{j=1\\j \neq i}}^{N} \nabla_{x_i}U\left(\frac{|x_{i}(t) - x_{j}(t)|}{\varepsilon}\right) - A(I+tA)^{-1} w_{i}(t).
	 \end{aligned}
	 \right.
     \end{equation}

    By applying the Boltzmann-Grad limit, i.e., in the $d-$dimension, $(N-1) \varepsilon^{d-1} = O(1)  $ as $N \rightarrow \infty$ and $\varepsilon \rightarrow 0$, we have 
    \begin{equation}\label{BUR}
    \begin{split}
        \frac{\partial g(t,x,w)}{\partial t} + w \cdot \nabla_{x} g(t,&x,w) + [A(I+tA)^{-1} x] \cdot \nabla_{x} g(t,x,w) \\
        &-  [A(I+tA)^{-1} w] \cdot \nabla_{w} g(t,x,w) = \mathcal{Q}(g,g)(t,x,w)\,.
    \end{split}
    \end{equation}
    To achieve the desired form of collision operator $\mathcal{Q}(g,g)$ as in Eq.~\eqref{Q}, we assume that the potential $U \in C^2$ is a radial, non-negative, non-increasing function supported in a unit ball $\{ x \in \RR^d,\, 0<|x|<1 \}$ but unbounded near $|x| = 0$. Additionally, we require that both $U$ and $\nabla U$ vanish on the boundary of the unit ball, and satisfy the condition $|x|U''(|x|) + 2U'(|x|) \geq 0$ as specified in \cite[Assumption 1.2.1, Lemma 8.3.1]{GST2013}.
    The detailed derivation of Eq.~\eqref{BUR} from Eqs.~\eqref{simu_dynamic_xwG} via BBGKY hierarchy is laid out in Section~\ref{sub:formal-homo-boltzmann}, and the related properties are summarized in Section~\ref{sec:prop}.\\
    Similarly, if we further take the homogeneity of $g$ into account, Eq.~\eqref{BUR} becomes the so-called homo-energetic Boltzmann equation Eq.~\eqref{g}.
    \end{itemize}

It is worth noting that when the potential function $U(r)$ is a power function of $r$ (e.g., inverse power law $U(r) = \frac{1}{r^{\alpha}}$, where $\alpha > 1$ in $d=3$ dimension since $\alpha=1$ corresponds to the threshold case of Coulomb potential \cite{Villani02}), the two scaling strategies can be unified by extracting the scaling parameter $\ve$ out of $U(\frac{r}{\varepsilon})$. Additionally, our framework can also include another crucial scaling \cite[Eq.~(45)]{PS2021} that characterizes the weak interaction between molecules, specifically in the weak-coupling regime. This inclusion is anticipated to yield the homo-energetic Landau equation. For a detailed derivation of the Landau operator, we direct readers to \cite[Section 3.2]{PS2021}.\\

\noindent\textbf{Result 2: From the mesoscopic regime to macroscopic regime (Arrow (7))}\\

Another significant contribution of this paper is the derivation of the hydrodynamic equation from the kinetic equation, incorporating the structural properties inherited from OMD. Specifically, it bridges the gap identified as Arrow (7). As highlighted in \cite{James_ICM}, a specific family of unsteady macroscopic flows, associated with the simplest translation group \eqref{OMD-s}, inherently possesses a bulk velocity field $u(t,x) = A(I+tA)^{-1}x$ in Eulerian form. This velocity field naturally satisfies various steady and unsteady macroscopic fluid equations, leading us to anticipate that the conventional hydrodynamic systems governing the evolution of macroscopic quantities (density $\rho$ and temperature $\theta$ as defined in \eqref{rho} and \eqref{theta}, respectively) can be partially reduced.

Recall that $L(t) = A(I+tA)^{-1}$, we have:
\begin{itemize}
    \item By applying the Hilbert expansion, we derive a reduced Euler system from the homo-energetic Boltzmann equation Eq.~\eqref{g}:
    \begin{equation}\label{euler}
		\left\{
		\begin{aligned}
			\partial_{t}\rho(t) + \text{Tr}[L(t)] \rho(t)  &= 0, \\
			\partial_{t}\theta(t) + \frac{2}{3}\text{Tr}[L(t)]\theta(t) & = 0.
		\end{aligned}
		\right.
    \end{equation}
    Details are presented in Section~\ref{subsec:EL}.

    \item By applying the Chapman-Enskog expansion, we obtain the corresponding reduced Navier-Stokes system with $ O(\epsilon) $ correction terms from the homo-energetic Boltzmann equation Eq.~\eqref{g}:
    \begin{equation}\label{NS}
	\left\{
	\begin{aligned}
		\partial_{t}\rho(t) + \text{Tr}[L(t)] \rho(t)  &= 0, \\
		\partial_{t}\theta(t) + \frac{2}{3}\text{Tr}[L(t)]\theta(t) & = \epsilon \mu(\theta)\frac{1}{2}\Big( \text{Tr}[L^{2}(t)] + L(t):L(t) - \frac{2}{3} (\text{Tr}[L(t)])^2 \Big).
	\end{aligned}
	\right.
    \end{equation}
    where the viscosity $\mu$ is defined in \eqref{mu1}.
    See Section~\ref{subsec:NS} for more details.
\end{itemize}




\section{A Mean-field model for long-range interaction}
\label{sec:mean-field}

In this section, we focus on the derivation of a mesoscopic model from the mean-field scaling system described by Eqs.~\eqref{simu_dynamic_xw_1N}. This leads to the kinetic equation Eq.~\eqref{P1UR}, where the particle distribution function is influenced by an averaged force field. This equation can be further reduced to Eq.~\eqref{PR}. 

There are two approaches to complete the formalism of the mean-field limit on the single-particle phase space. One utilizes the concept of empirical measure and establishes the stability of the mean-field equation through Dobrushin's estimate \cite{Dobrushin, GMR2013}. The other approach, based on the BBGKY hierarchy, involves the $N$-particle distribution and demonstrates that it marginally satisfies the mean field equation. The former approach is simpler, while the latter is more flexible. In our presentation, we opt for the latter approach in the formal derivation as it can be applied to both scalings, and utilize the former approach for rigorous derivation. More details on the relation between these two approaches can be found in \cite{Golse2005, golse2016dynamics}.

\subsection{Derivation via BBGKY hierarchy}
\label{sub:derivation_mean_field}
Denote
\[
z_i : = (x_i, w_i)\,, \quad  Z_N = (z_1, \cdots, z_N) \in \Omega_N \,, \quad \Omega^{N} := \left\lbrace Z_N \in \RR^{6N} \ \big| \ x_{i} \neq  x_{j}, \ i\neq j \right\rbrace
\]
and let 
\[
P^{(N)}(t, Z_N) := P^{(N)}(t, z_{1},...,z_{N}) =P^{(N)} (t, x_{1}, w_{1},...,x_{N},w_{N})
\]
be the $N$-particle distribution function. Correspondingly, 
the $s-$marginal distribution  of $P^{(N)}$, denoted as $P^{(s)}(t, Z_s)$, is 
    \begin{equation} \label{0628}
        P^{(s)} (t, Z_s) := \int_{\RR^{6(N-s)}} P^{(N)}(t, Z_s, z_{s+1},...,z_{N}) \,\rd z_{s+1} ...\,\rd z_{N}\,, \quad
        Z_s = (z_1, z_2, \cdots, z_s)\,,
    \end{equation}
and then our goal is to derive the mean-field equation for the first marginal of the distribution $P^{(1)}(t, z_1)$.

Starting with the Liouville equation satisfied by $P^{(N)}(t,Z_N)$
	\begin{equation}\label{BBGKY0}
		\frac{\partial P^{(N)}(t,Z_N)}{\partial t} + \sum_{i=1}^{N} \left[ \dot{x}_i 
 \cdot \nabla_{x_i}  P^{(N)}  + \dot{w}_i \cdot  \nabla_{w_i}  P^{(N)} \right](t,Z_N) = 0\,,
	\end{equation}
and substituting Eqs.~\eqref{simu_dynamic_xw_1N} leads to 
    \begin{multline}\label{BBGKY00}
        \frac{\partial P^{(N)}}{\partial t} + \sum_{i=1}^N w_i\cdot \nabla_{x_i}P^{(N)} + \sum_{i=1}^N [A(I+tA)^{-1} x_i] \cdot \nabla_{x_i}P^{(N)} \\ 
        - \frac{1}{N} \sum_{i=1}^N \sum_{\substack{j=1\\j \neq i}}^{N} \nabla_{x_i}U\left(\left|x_{i} - x_{j}\right|\right) \cdot \nabla_{w_i}P^{(s)} 
        -\sum_{i=1}^N [A(I+tA)^{-1} w_i] \cdot \nabla_{w_i}P^{(N)} 
        = 0 \,.
    \end{multline}

Integrating Eq.~\eqref{BBGKY00} over the domain $\{z_{s+1},...,z_N\}$, we obtain the corresponding kinetic equation of the $s$-marginal distribution $P^{(s)}(t,Z_s)$,
    \begin{multline*}
        \frac{\partial P^{(s)}}{\partial t} + \underbrace{ \int_{\RR^{6(N-s)}} \left( \sum_{i=1}^N w_i\cdot \nabla_{x_i}P^{(N)} + \sum_{i=1}^N [A(I+tA)^{-1} x_i] \cdot \nabla_{x_i}P^{(N)} \right) \,\rd z_{s+1}...z_N }_{=:(I)} \\ 
        \underbrace{- \int_{\RR^{6(N-s)}} \left( \frac{1}{N} \sum_{i=1}^N \sum_{\substack{j=1\\j \neq i}}^{s} \nabla_{x_i}U\left(\left|x_{i} - x_{j}\right|\right) \cdot \nabla_{w_i}P^{(N)}-\sum_{i=1}^N [A(I+tA)^{-1} w_i] \cdot \nabla_{w_i}P^{(N)} \right) \,\rd z_{s+1}...z_N }_{=:(II)} \\
        = \underbrace{\int_{\RR^{6(N-s)}}  \frac{1}{N} \sum_{i=1}^N \sum_{\substack{j=s+1\\j \neq i}}^{N} \nabla_{x_i}U\left(\left|x_{i} - x_{j}\right|\right) \cdot \nabla_{w_i}P^{(N)}  \,\rd z_{s+1}...z_N }_{=:(III)}\,.
    \end{multline*}   

For term $(I)$, note that 
	\begin{equation*}
        \begin{split}
        (I) =& \sum_{i=1}^s w_i\cdot \nabla_{x_i}P^{(s)} + \sum_{i=1}^s \left[ A(I+tA)^{-1} x_i \right] \cdot \nabla_{x_i}P^{(s)} \\ 
         &\qquad \qquad + \sum_{i=s+1}^N \int_{\RR^{6(N-s)}} \left[ A(I+tA)^{-1} x_i \right] \cdot \nabla_{x_i}P^{(N)} \,\rd z_{s+1}...z_N \\
	=& \sum_{i=1}^s w_i\cdot \nabla_{x_i}P^{(s)} + \sum_{i=1}^s  [A(I+tA)^{-1} x_i] \cdot \nabla_{x_i} P^{(s)} - (N-s)\text{Tr}[A(I+tA)^{-1}] P^{(s)}\,.
        \end{split}
	\end{equation*}
	Similarly, term $(II)$ becomes
	\begin{equation*}
	    \begin{split}
	        (II) = &  - \frac{1}{N} \sum_{\substack{i,j=1\\i\neq j}}^{s} \nabla_{x_i}U\left(\left|x_{i} - x_{j}\right|\right)  \cdot \nabla_{w_i}P^{(s)} - \sum_{i=1}^s  [A(I+tA)^{-1} w_i] \cdot \nabla_{w_i} P^{(s)}   \\
	        & + (N-s) \text{Tr}[A(I+tA)^{-1}] P^{(s)}\,.
	    \end{split}
	\end{equation*}
Since particles are indistinguishable, term $(III)$ can be re-written as
	\begin{equation*}
	    \begin{split}
	        (III) = & \frac{N-s}{N} \sum_{i=1}^s \int_{\RR^{6}} \nabla_{x_i}U\left(\left|x_{i} - x_{s+1}\right|\right) \cdot \nabla_{w_i}P^{(s+1)}(t,Z_s,z_{s+1})\,\rd z_{s+1}\\
	        = & \frac{N-s}{N} \sum_{i=1}^s \nabla_{w_i} \cdot \int_{\RR^{6}} \left[  \nabla_{x_i}U\left(\left|x_{i} - x_{s+1}\right|\right)  P^{(s+1)}(t,Z_s,z_{s+1}) \right] \,\rd z_{s+1} \,.
	    \end{split}
	\end{equation*}
	
Combining the terms $(I)-(III)$ altogether, we arrive at the following equation for the marginal distribution $P^{(s)}$ 
	\begin{multline}\label{P1U}
	     \frac{\partial P^{(s)}}{\partial t} + \sum_{i=1}^s w_i\cdot \nabla_{x_i}P^{(s)} + \sum_{i=1}^s  [A(I+tA)^{-1} x_i] \cdot \nabla_{x_i} P^{(s)}\\
      - \sum_{i=1}^s  [A(I+tA)^{-1} w_i] \cdot \nabla_{w_i} P^{(s)} - \frac{1}{N}\sum_{\substack{i,j=1\\i\neq j}}^{s} \nabla_{x_i}U\left(\left|x_{i} - x_{j}\right|\right) \cdot \nabla_{w_i}P^{(s)}\\
      = \frac{N-s}{N} \sum_{i=1}^s \nabla_{w_i} \cdot \int_{\RR^{6}} \left[  \nabla_{x_i}U\left(\left|x_{i} - x_{s+1}\right|\right)  P^{(s+1)}(t,Z_s,z_{s+1}) \right] \,\rd z_{s+1} \,.
	\end{multline}
In particular, taking $s=1$ in Eq.~\eqref{P1U} above, it reduces to the two-particle case:
\begin{equation}\label{P2U}
	\begin{split}
	    \frac{\partial P^{(1)}}{\partial t}& + w_1\cdot \nabla_{x_1}P^{(1)} + [A(I+tA)^{-1} x_1] \cdot \nabla_{x_1} P^{(1)} -  [A(I+tA)^{-1} w_1] \cdot \nabla_{w_1} P^{(1)} \\
	    = &  \frac{N-s}{N} \nabla_{w_1} \cdot \int_{\RR^{6}} \left[  \nabla_{x_1}U\left(\left|x_{1} - x_{2}\right|\right)  P^{(2)}(t,z_1,z_2) \right] \,\rd z_{2}\,. 
	\end{split}
\end{equation}

To close the hierarchy above, we consider the following ``propagation of chaos'' assumption \cite{Villani02}: 
	\begin{equation*}
	    P^{(2)}(t,z_1,z_2) = P^{(1)}(t,x_1,w_1) P^{(1)}(t,x_2,w_2)\,,
	\end{equation*}
which says the two particles remain independent throughout the dynamics. Under this assumption, the right-hand side of Eq.~\eqref{P2U} becomes
\begin{equation}\label{G0}
\begin{split}
    &\frac{N-1}{N} \nabla_{w_1} \cdot \int_{\RR^{6}} \left[  \nabla_{x_1}U\left(\left|x_{1} - x_{2}\right|\right)  P^{(2)}(t,z_1,z_2) \right] \,\rd z_{2}\\
    =& \frac{N-1}{N}  \int_{\RR^{6}} \left[  \nabla_{x_1}U\left(\left|x_{1} - x_{2}\right|\right)  P^{(1)}(t,x_2,w_2) \nabla_{w_1}P^{(1)}(t,x_1,w_1)  \right] \,\rd x_{2} \,\rd w_2\\
    =& \frac{N-1}{N} \int_{\RR^{3}} \left[  \nabla_{x_1}U\left(\left|x_{1} - x_{2}\right|\right) \int_{\RR^{3}} P^{(1)}(t,x_2,w_2)\,\rd w_2  \right] \,\rd x_{2} \cdot \nabla_{w_1}P^{(1)}(t,x_1,w_1)\\
    =& \frac{N-1}{N} \nabla_{x_1}U*\rho_{P^{(1)}}(t,x_1) \cdot \nabla_{w_1}P^{(1)}(t,x_1,w_1).
\end{split}
\end{equation}
By sending $N \rightarrow \infty$ and re-naming $P^{(1)}(t,x_1,w_1)$ by $g(t,x,w)$, the Eq.~\eqref{P2U} is actually Eq.~\eqref{P1UR}. Furthermore, since molecules in different $x$ see the same environment,
the spatial dependence is removable at the kinetic level. Therefore, $g$ is a spatially homogeneous function, which then obeys the reduced mean-field equation Eq.~\eqref{PR}.



\subsection{Rigorous justification of the mean-field equation}
\label{sub:rigorous}

In this subsection, we underpin the well-posedness of Eq.~\eqref{P1UR}, and establish a rigorous path from OMD to the mean field equation. Our approach will follow that in \cite{CCR2011}.

First, we set up some notation. We denote $\mathcal{P}_1(\mathbb{R}^3 \times \mathbb{R}^3)$ as the space of probability measures on $\mathbb{R}^3 \times \mathbb{R}^3$ with a finite first moment. This space is equipped with the Monge-Kantorovich-Rubinstein distance $W_1$, defined as: for $V = (X, W) \in \RR^3 \times \RR^3$,
    \begin{equation*}
    W_1(\mu,\nu) := \sup \Big\{ \Big| \int_{\RR^3 \times \RR^3} \varphi(V) (\mu(V) - \nu(V)) \,\rd V  \Big|, \ \varphi\in \text{Lip}(\RR^3 \times \RR^3),\ \|\varphi\|_{\rm{Lip}}\leq 1 \Big\},
    \end{equation*}
where $\text{Lip}(\mathbb{R}^3 \times \mathbb{R}^3)$ denotes the set of Lipschitz functions on $\mathbb{R}^3 \times \mathbb{R}^3$, and $\|\cdot\|_{\text{Lip}}$ represents the associated norm. Additionally, we define $\mathcal{P}_c(\mathbb{R}^3 \times \mathbb{R}^3)$ as a subset of $\mathcal{P}_1(\mathbb{R}^3 \times \mathbb{R}^3)$ with compact support. We also introduce a metric space $\mathcal{G} := C\big([0,T], \mathcal{P}_c(\mathbb{R}^3 \times \mathbb{R}^3)\big)$ associated with the distance $\mathcal{W}_1$ defined as follows: for $g_t(V):=g(t,V) $ and $ h_t(V):=h(t,V)$ in $\mathcal{G}$,
\begin{align}\label{def0622}
    \mathcal W_1(g(\cdot, \cdot), h(\cdot, \cdot)) := \sup_{t\in [0,T]} W_1(g_t(\cdot), h_t(\cdot)). 
\end{align}

Compared to the classical mean-field equation in \cite{CCR2011}, the essential difference of Eq.~\eqref{P1UR} lies in the left-hand side, where the characteristic trajectory $(X, W):=(X(t), W(t))$ is written as follows
\begin{equation}\label{XW}
		\left\{
		\begin{aligned}
			& \frac{\rd}{\rd t} X =
			W + A(I+tA)^{-1}X,\\[6pt]
			& \frac{\rd}{\rd t} W
	    = -\nabla U*\rho_g(t,X) - A(I+tA)^{-1} W\,.
		\end{aligned}
		\right.
\end{equation}
In the rest of this subsection, we will take the simple shear as an example (see \cite[Theorem 3.1]{JNV2019}), in which case $A$ is rank-1 and 
traceless, i.e., 
\begin{equation}\label{simple_shear}
    L(t) = A(I+tA)^{-1} = 
    \begin{pmatrix}
    0 & K & 0\\
    0 & 0 & 0 \\
    0 & 0 & 0
\end{pmatrix}, 
\quad \text{with} \ K \neq 0.
\end{equation}

In fact, for the purpose of future extension, we consider a rather general field $\xi$ and an operator $\mathcal{H}$ that satisfy a certain class of hypotheses, instead of assuming a specific form. Specifically, we consider the following system:
\begin{equation}\label{XWG}
		\left\{
		\begin{aligned}
			& \frac{\rd}{\rd t} X =
			\xi(t,X,W),\\[6pt]
			& \frac{\rd}{\rd t} W = \mathcal{H}[g](t,X,W) = E[g](t,X) + \eta(t,W)\,.
		\end{aligned}
		\right.
\end{equation}

Here, we will present the sufficient hypotheses that ensure the fulfillment of $\xi$ and $\mathcal{H}$ for the specific case of Eq.~\eqref{XW}. These hypotheses guarantee the well-posedness of the mean-field equation Eq.\eqref{P1UR}.

\begin{hypothesis}\label{Hypo_xi}[Hypotheses on $\xi$]

\noindent (i) $\xi(t,x,w)$ is continuous on $[0,T] \times \RR^3 \times \RR^3$.\\
(ii) There exists a constant $C_{\xi} > 0$,
\begin{equation}\label{xi_growth}
    |\xi(t,x,w)| \leq C_{\xi} (1 + |x| + |w|), \quad \forall t,x,w \in [0,T] \times \RR^3 \times \RR^3\,.
\end{equation}
(iii) $\xi$ is locally Lipschitz in variables $x$ and $w$, i.e., for any compact set $D \subset \RR^3 \times \RR^3$, there is a constant $L_{\xi} = L_{\xi}(D) > 0$ such that,
\begin{equation*}
    |\xi(t, V_1) - \xi(t,V_2)| \leq L_{\xi} |V_1 - V_2|,\quad t \in [0,T], \quad V_1, V_2 \in D.
\end{equation*}
\end{hypothesis}

\begin{remark}
    Note that $\xi(t,x,w) = w + A(I+tA)^{-1}x $ satisfies the Hypothesis \ref{Hypo_xi} with $A(I+tA)^{-1} $ being a simple shear as in \eqref{simple_shear}. Indeed, we have, for all $ t \in [0,T]$, 
    \begin{equation*}
    \begin{split}
        |\xi(t,x,w)| = |w + A(I+tA)^{-1}x| \leq &\ |w| + K|x|\\
        \leq & \ C_{\xi}(1 + |w| + |x|),
    \end{split}
    \end{equation*}
    where $C_{\xi} = 1+K$. On the other hand, for all $t \in [0,T]$ and $ V_1, V_2 \in D$,
    \begin{equation*}
    \begin{split}
        |\xi(t, V_1) - \xi(t,V_2)| \leq & \ |w_1 - w_2| + K |x_1 - x_2|\\
        \leq &\ L_{\xi} |V_1 - V_2|\,,
    \end{split}
    \end{equation*}
    where $L_{\xi} = 2(1+K)$.
\end{remark}


\begin{hypothesis}\label{Hypo_H}[Hypotheses on $\mathcal{H}$]

\noindent (i) $\mathcal{H}[g](t,x,w)$ is continuous on $[0,T] \times \RR^3 \times \RR^3$.\\
(ii) For any $g(t,\cdot,\cdot) \in P_{c}(\RR^3 \times \RR^3)$ with support contained in a ball $B_R \subset \RR^3 \times \RR^3$ and  for all $t \in [0,T]$, there exists a constant $C_{\mathcal{H}} = C_{\mathcal{H}}(R, T) > 0$,
\begin{equation}\label{H_growth}
    \|\mathcal{H}[g](t,\cdot,\cdot)\|_{L^{\infty}(B_R)} \leq C_{\mathcal{H}}, \quad \forall t \in [0,T].
\end{equation}
(iii) For $g, h \in P_{1}(\RR^3 \times \RR^3)$ and any ball $B_R \subset \RR^3 \times \RR^3$,
\begin{equation}\label{H-hypo31}
    \|\mathcal{H}[g](\cdot,\cdot) - \mathcal{H}[h](\cdot,\cdot)\|_{L^{\infty}(B_R)} \leq {\rm{Lip}}_{R}\big[\mathcal{H}(\cdot,\cdot)\big] W_1\big(g(\cdot,\cdot),h(\cdot,\cdot)\big).
\end{equation}
Furthermore, if $g_t,h_t \in \mathcal{G}$ such that ${\rm{supp}}(g_t) \cup {\rm{supp}}(h_t) \subset B_{R_0}$ for all $t \in [0,T]$. Then for any ball $B_R \subset \RR^3 \times \RR^3$, there exists a constant $L_{\mathcal{H}} = L_{\mathcal{H}}(R,R_0)$ such that
\begin{equation}\label{H-hypo32}
    \max_{t \in [0,T]} \|\mathcal{H}[g](t,\cdot,\cdot) - \mathcal{H}[h](t,\cdot,\cdot)\|_{L^{\infty}(B_R)} \leq L_{\mathcal{H}} \mathcal{W}_1\big(g(\cdot,\cdot,\cdot),h(\cdot,\cdot,\cdot)\big),
\end{equation}
with
\begin{equation*}
    \max_{t \in [0,T]} {\rm{Lip}}_{R}\big[\mathcal{H}(t,\cdot,\cdot)\big] \leq L_{\mathcal{H}}.
\end{equation*}

\end{hypothesis}

\begin{remark}
    It can be illustrated that the particular operator $\mathcal{H}[g](t,x,w) = E[g](t,x) + \eta(t,w)$ in the designated model Eqs.~\eqref{XWG} satisfies the desired Hypothesis \ref{Hypo_H}, as long as $E[g](t,x)= -\nabla U*\rho_g(t,x)$ satisfies the Hypothesis~\ref{Hypo_E} as in Appendix~\ref{app:hypo}. Consequently, the Lipschitz constant ${\rm{Lip}}_{R}\big[\mathcal{H}(\cdot,\cdot)\big]$ in \eqref{H-hypo31} and $\max_{t \in [0,T]} {\rm{Lip}}_{R}\big[\mathcal{H}(t,\cdot,\cdot)\big]$ in \eqref{H-hypo32} will depend on the potential $U$.
\end{remark}

Finally, we can define the flow operator at time $t \in [0,T)$ of Eqs.~\eqref{XWG},
\begin{equation*}
    \mathcal{T}_{\xi,\mathcal{H}}^t: (X(0), W(0)) \in \RR^3 \times \RR^3 \mapsto  (X(t), W(t)) \in \RR^3 \times \RR^3\,.
\end{equation*}
Following the definition of the solution as in \cite[Definition 3.3]{CCR2011}, for an initial probability measure $g_0(x,w) \in \mathcal{P}_1(\RR^3 \times \RR^3)$, the function 
\begin{equation}\label{DefT}
    g(t,x,w): [0,T) \rightarrow \mathcal{P}_1(\RR^3 \times \RR^3), \quad t \mapsto g_t(x,w) := \mathcal{T}_{\xi,\mathcal{H}}^t \#g_0(x,w)
\end{equation}
is a measure-valued solution to Eq.~\eqref{P1UR} in the distributional sense, where $g(t,x,w)= g_t(x,w)= \mathcal{T}_{\xi,\mathcal{H}}^t \#g_0(x,w)$ is defined as
\begin{equation*}
    \int_{\RR^3 \times \RR^3} \zeta(x,w) g(t,x,w) \,\rd x \,\rd w = \int_{\RR^3 \times \RR^3} \zeta\left(\mathcal{T}_{\xi,\mathcal{H}}^t(x,w)\right) g_0(x,w) \,\rd x \,\rd w
\end{equation*}
for all $\zeta \in C_b(\RR^3 \times \RR^3)$.




\subsubsection{Well-posedness theorem of mean-field equation}

  Our main well-posedness theorem for Eq.~\eqref{P1UR} states as follows: 

    \begin{theorem}[Existence, uniqueness and stability]\label{well-posed_P1UR}

    Assume that the field $\xi(t,x,w)$ satisfies the Hypothesis~\ref{Hypo_xi} and the operator $\mathcal{H}(t,x,w)$ satisfies the Hypothesis~\ref{Hypo_H}.
    
    For any initial datum $g_0(x,w) \in \mathcal{P}_c(\RR^3 \times \RR^3)$, there exists a measure-valued solution $g_t(x,w) = g(t,x,w) \in C\big([0,+\infty), \mathcal{P}_c(\RR^3 \times \RR^3)\big)$ to Eq.~\eqref{P1UR}, and there is an increasing function $R = R(T)$ such that for all $T>0$,
	\begin{equation}\label{gincrease}
	    {\rm{supp}} \ g_t(\cdot,\cdot) \subset B_{R(T)} \subset \mathbb{R}^3 \times \mathbb{R}^3, \quad \forall \ t \in [0,T].
	\end{equation}
    This solution is unique among the family of solutions $C\big([0,+\infty), \mathcal{P}_c(\mathbb{R}^3 \times \mathbb{R}^3)\big)$ satisfying \eqref{gincrease}.
    
    Moreover, the solution depends continuously with respect to the initial data in the following sense. Assume that $g_0, h_0 \in \mathcal{P}_c(\mathbb{R}^3 \times \mathbb{R}^3)$ are two initial conditions, and $g_t,h_t$ are the corresponding solutions to Eq.~\eqref{P1UR}. Then,
    \begin{equation*}
    W_1(g_t(\cdot,\cdot), h_t(\cdot,\cdot)) \leq  \e^{2tL} W_1(g_0(\cdot,\cdot),h_0(\cdot,\cdot)), \quad \forall t \geq 0,
    \end{equation*}
    where $L = \max\{L_{V}, L_{\mathcal{H}} \}$ with $L_{V}$ in Lemma \ref{RCE}.
    
\end{theorem}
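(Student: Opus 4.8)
The plan is to realize the solution as the push-forward of the initial datum along the characteristic flow of \eqref{XWG}, exactly as in \eqref{DefT}, and to obtain this push-forward as the fixed point of a self-consistency map on the complete metric space $(\mathcal{G}, \mathcal{W}_1)$ restricted to curves with uniformly bounded support. Because $\mathcal{H}[g]$ depends on the unknown measure $g$ through $E[g] = -\nabla U * \rho_g$, this is a genuinely nonlinear problem, and the heart of the argument is a contraction estimate in the $\mathcal{W}_1$ metric, following the strategy of \cite{CCR2011}.

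First I would freeze an arbitrary curve $g \in \mathcal{G}$ and consider the frozen field $(\xi, \mathcal{H}[g])$. By the continuity and local-Lipschitz parts of Hypothesis~\ref{Hypo_xi} and Hypothesis~\ref{Hypo_H}, this field is continuous and locally Lipschitz in $(x,w)$, so the Cauchy--Lipschitz theorem gives a unique maximal solution of \eqref{XWG} from each initial point and hence a well-defined flow $\mathcal{T}^t_{\xi,\mathcal{H}[g]}$. The linear-growth bound \eqref{xi_growth} together with the $L^\infty$ bound \eqref{H_growth} feed into Gr\"onwall's inequality to rule out finite-time escape of the characteristics and to produce the increasing radius $R(T)$ in \eqref{gincrease}. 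This confines the dynamics to a fixed ball $B_{R(T)}$ on $[0,T]$ and identifies a closed, invariant subset of $\mathcal{G}$ with uniformly bounded support on which the fixed-point iteration can be run.

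Next I would define $\mathcal{F}\colon g \mapsto \big(t \mapsto \mathcal{T}^t_{\xi,\mathcal{H}[g]} \# g_0\big)$ and bound $\mathcal{W}_1(\mathcal{F}(g), \mathcal{F}(h))$. The core computation compares two characteristics issuing from the same initial point but driven by $\mathcal{H}[g]$ and $\mathcal{H}[h]$: differentiating $|X_g - X_h| + |W_g - W_h|$ and invoking the local Lipschitz bound on $\xi$ together with the $W_1$-Lipschitz bound \eqref{H-hypo32} on $\mathcal{H}$ yields, after Gr\"onwall, a pointwise control of the flow difference by $C(T)\,\mathcal{W}_1(g,h)$ with $C(T) \to 0$ as $T \to 0$. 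Combining this with the elementary push-forward inequality $W_1(\Phi \# g_0, \Psi \# g_0) \le \|\Phi - \Psi\|_{L^\infty(\mathrm{supp}\, g_0)}$ gives $\mathcal{W}_1(\mathcal{F}(g), \mathcal{F}(h)) \le C(T)\,\mathcal{W}_1(g,h)$. Choosing $T$ small makes $\mathcal{F}$ a contraction, so the Banach fixed-point theorem produces a unique local solution; the global support bound \eqref{gincrease} then lets me re-iterate on successive intervals to cover $[0,+\infty)$, and the same flow estimates confirm that the push-forward is a distributional solution in the sense of \eqref{DefT}.

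Finally, for the stability statement I would take two solutions $g_t, h_t$ with data $g_0, h_0$, couple the two characteristic systems through an optimal transport plan realizing $W_1(g_0,h_0)$, and estimate the evolution of the transport distance. Controlling it by the Lipschitz constant of the $g$-independent part of the vector field (the constant $L_P$) and that of the interaction operator (the constant $L_{\mathcal{H}}$ from Hypothesis~\ref{Hypo_H}) and applying Gr\"onwall delivers the factor $\e^{2tL}$ with $L = \max\{L_P, L_{\mathcal{H}}\}$. I expect the main obstacle to be the contraction estimate: one must simultaneously track the $g$-dependence of the flow through $\mathcal{H}[g]$ and close the Gr\"onwall loop using only the $W_1$-Lipschitz hypothesis \eqref{H-hypo32}, which is delicate because the interaction couples the position and velocity components and because the support must first be controlled uniformly before the support-dependent constant $L_{\mathcal{H}}(R,R_0)$ can be invoked at all.
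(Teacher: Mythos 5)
Your proposal is correct and follows essentially the same route as the paper: a Banach fixed-point argument for $g \mapsto \mathcal{T}^t_{\xi,\mathcal{H}[g]}\# g_0$ on $(\mathcal{G},\mathcal{W}_1)$ with uniformly bounded supports, contraction via a Gr\"onwall comparison of characteristics driven by $\mathcal{H}[g]$ versus $\mathcal{H}[h]$ combined with the push-forward inequality, and global extension using the support bound. The only cosmetic difference is in the stability step, where you couple the two flows through an optimal plan for $(g_0,h_0)$ while the paper splits by the triangle inequality into a same-datum/different-operator term and a same-operator/different-datum term (using the Lipschitz continuity of the flow map); both reduce to the same Gr\"onwall loop in $L_P$ and $L_{\mathcal{H}}$ and yield the factor $\e^{2tL}$.
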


\begin{proof}
    \textbf{(Existence and uniqueness):} Given any initial condition $g_0(x,w) \in \mathcal{P}_c(\RR^3 \times \RR^3)$ with support contained in a ball $B_{R_0} \subset \RR^3 \times \RR^3$ for some $R_0 > 0$, we prove the local existence and uniqueness of solutions by a fixed-point argument in a complete metric space $(\mathcal{G}, \mathcal{W}_1)$ defined in \eqref{def0622}, where the support of $g(t,x,w)$ is contained in $B_R$ for all $ t \in [0,T]$ with $R = 2R_0$, and $T > 0$ is fixed time that will be determined later on. 

   We now define an operator $\Gamma$ on the space $\mathcal{G}$ such that its fixed point is the solution to the mean-field equation Eq.~\eqref{P1UR}. For $g \in \mathcal{G}$, if the field $\xi(t,x,w)$ and operator $\mathcal{H}[\cdot](t,x,w)$ satisfy the Hypothesis~\ref{Hypo_xi} and ~\ref{Hypo_H}, respectively, we define:
    \begin{equation*}
        \Gamma[g](t,x,w) := \mathcal{T}^t_{\xi,\mathcal{H}[g]} \#g_0(x,w)\,.
    \end{equation*}
    Clearly, if $g$ is the solution to Eq.~\eqref{P1UR} with the initial condition $g_0(x,w)$, then $\Gamma[g]$ also solves the same initial value problem. This can be demonstrated using the method of characteristics.

    To invoke the fixed-point argument, we need to accomplish the following two tasks. 
    \begin{itemize}
    \item[(I)] Show that $\Gamma$ maps $g \in \mathcal{G}$ to the same space $\mathcal{G}$ under an appropriate choice of time $T_1$.That is, we need to show that $\mathcal{T}^t_{\xi,\mathcal{H}[g]} \#g_0(x,w)$ is a probability measure in $\mathcal{P}_1$ with compact support in $B_R$.
    
    Thanks to the Hypothesis~\ref{Hypo_xi} and~\ref{Hypo_H}  on $\xi(t,x,w)$ and $\mathcal{H}(t,x,w) $, using Lemma \ref{RCE}, we see that $|\frac{\rd }{\rd t} \mathcal{T}_{\xi,\mathcal{H}[g]}(V)| \leq C_V $ for all $V \in B_{R_0} \subset \RR^3 \times \RR^3$ with $C_V>0$ depending on $R_0$, $T$, $C_{\xi}$ and $C_{\mathcal{H}}$. Then as long as $T_1$ is selected such that $T_1 \leq \frac{R_0}{C_P}$, the support of $\Gamma[g](t,x,w) = \mathcal{T}^t_{\xi,\mathcal{H}[g]} \#g_0(x,w)$ is contained in $B_R$ with $R = 2R_0$. Meanwhile, $\Gamma[g](t,x,w) \in \mathcal{P}_1(\RR^3 \times \RR^3)$ is fulfilled by the mass conservation. Moreover, the time continuity of $\Gamma[g]$, i.e., $t \mapsto \Gamma[g](t,\cdot,\cdot)$ follows from Lemma \ref{con_t_g}.  
    This implies that the operator $\Gamma[g]$ mapping from the space $\mathcal{G}$ into itself is well-defined.\\

    \item[(II)] Demonstrate that $\Gamma$ is a contraction map in $\mathcal{G}$ for specific choices of $T_2$. This is to show that, for any $g, h \in \mathcal{G}$, the following inequality holds:
   \begin{equation*}
        \mathcal{W}_1 \big(\Gamma[g](\cdot,\cdot,\cdot), \Gamma[h](\cdot,\cdot,\cdot) \big) \leq C \mathcal{W}_1 \big(g(\cdot,\cdot,\cdot), h(\cdot,\cdot,\cdot)\big)\,,
    \end{equation*}
     where $0 < C < 1$ is a constant independent of $g$ and $h$. Note that, starting from the same $g_0 \in \mathcal{P}_c(\RR^3 \times \RR^3)$ with support contained in $B_R$, we have
    \begin{equation}\label{W1sup}
        \mathcal{W}_1 \big(\Gamma[g](\cdot,\cdot,\cdot), \Gamma[h](\cdot,\cdot,\cdot)\big) = \sup_{t \in [0,T_2]} W_1 \Big(\mathcal{T}^t_{\xi,\mathcal{H}[g]}\# g_0(\cdot,\cdot), \mathcal{T}^t_{\xi,\mathcal{H}[h]}\#g_0(\cdot,\cdot) \Big)
    \end{equation}
    

    
    Further, for $t \in [0,T_2]$, we have 
    \begin{equation}\label{WW}
        \begin{split}
            & W_1 \Big(\mathcal{T}^t_{\xi,\mathcal{H}[g]}\#g_0(\cdot,\cdot), \mathcal{T}^t_{\xi,\mathcal{H}[h]}\#g_0(\cdot,\cdot) \Big) 
            \\ \leq & \frac{\e^{tL_V}-1}{L_V} \left(  \sup_{\tau \in (0,T)} \|\mathcal{H}[g](\tau, \cdot,\cdot)  - \mathcal{H}[h](\tau, \cdot,\cdot) \|_{L^\infty(\text{supp}g_0)} \right)\\
            \leq & \frac{\e^{tL_V}-1}{L_V} L_{\mathcal{H}} \sup_{\tau \in [0,T_2]} W_1(g_\tau(\cdot,\cdot) - h_\tau(\cdot,\cdot)) \\
            = & \frac{\e^{tL_V}-1}{L_V} L_{\mathcal{H}} \mathcal{W}_1(g(\cdot,\cdot,\cdot),h(\cdot,\cdot,\cdot))
        \end{split}
    \end{equation}
    where the first inequality comes from Lemma \ref{DCE} when $\xi_1=\xi_2 = \xi$, while the second inequality utilizes Hypothesis \ref{Hypo_H}. 
   
    Taking the maximum over $t \in [0,T_2]$ in \eqref{WW}, we see that
    \begin{equation*}
         \mathcal{W}_1 \big(\Gamma[g](\cdot,\cdot,\cdot), \Gamma[h](\cdot,\cdot,\cdot)\big) \leq   \frac{\e^{T_2L_V}-1}{L_V} L_{\mathcal{H}} \mathcal{W}_1 \big(g(\cdot,\cdot,\cdot),h(\cdot,\cdot,\cdot)\big)\,.
    \end{equation*}
    Since $\lim_{T_2 \rightarrow 0 } \frac{\e^{T_2L_V}-1}{L_V} = 0 $, 
    we can choose $T_2$ small enough such that $\frac{\e^{T_2L_V}-1}{L_V} L_{\mathcal{H}} < 1$. This ensures that the mapping $\Gamma$ is contractive on $\mathcal{G}$.
\end{itemize}

    Combining the analysis above, we prove the existence of a unique fixed point of $\Gamma[g]$ in $\mathcal{G}$ by selecting $T = \min\{T_1, T_2\}$. This fixed point, denoted as $g(t,x,w)$, represents the unique solution to Eq.~\eqref{P1UR} within the local time interval $[0,T]$.

    Moreover, since the time $T$ is independent of the initial condition and mass conservation is ensured, we can extend the solution equally to any global time interval by repeating the same argument. This extension is valid as long as the support of the solution remains compact, which has been verified in Lemma \ref{XtWt}.
    
    \textbf{(Stability):} Following the previous argument, we can choose any fixed $T > 0$ and $R > 0$ such that the supports of $g_t(x,w)$ and $h_t(x,w)$ are contained in $B_R$ for $t \in [0,T]$. Then we have: 
    \begin{equation*}
        \begin{split}
            & W_1 (g_t(\cdot, \cdot), h_t(\cdot, \cdot)) 
            \\ = & W_1 \Big( \mathcal{T}^t_{\xi,\mathcal{H}[g]} \#g_0(\cdot,\cdot), \mathcal{T}^t_{\xi,\mathcal{H}[h]}\#h_0(\cdot,\cdot) \Big) \\[2pt]
            \leq & W_1 \Big( \mathcal{T}^t_{\xi,\mathcal{H}[g]} \#g_0(\cdot,\cdot), \mathcal{T}^t_{\xi,\mathcal{H}[h]} \#g_0(\cdot,\cdot) \Big) + W_1 \Big( \mathcal{T}^t_{\xi,\mathcal{H}[h]} \#g_0(\cdot,\cdot), \mathcal{T}^t_{\xi,\mathcal{H}[h]} \#h_0(\cdot,\cdot) \Big) \\[4pt]
            \leq & \| \mathcal{T}^t_{\xi,\mathcal{H}[g]} - \mathcal{T}^t_{\xi,\mathcal{H}[h]} \|_{L^{\infty}(\text{supp}\ g_0)} + {\rm{Lip}}_R\big[\mathcal{T}^t_{\xi,\mathcal{H}[h]}\big] W_1\big(g_0(\cdot,\cdot), h_0(\cdot,\cdot)\big) \\[2pt]
            \leq & \int_0^t \e^{(t-\tau)L_V}\|\mathcal{H}[g](\tau, \cdot,\cdot)  - \mathcal{H}[h](\tau, \cdot,\cdot) \|_{L^\infty(B_R)} \,\rd\tau + \e^{tL_V} W_1\big(g_0(\cdot,\cdot), h_0(\cdot,\cdot)\big) \\
            \leq & \int_0^t \e^{(t-\tau)L_V}  {\rm{Lip}}_{R}\big[\mathcal{H}\big] W_1\big(g_{\tau}(\cdot,\cdot),h_{\tau}(\cdot,\cdot)\big) \,\rd\tau + \e^{tL_V} W_1\big(g_0(\cdot,\cdot), h_0(\cdot,\cdot)\big) \,.
        \end{split}
    \end{equation*}
    Note that ${\rm{Lip}}_{R}\big[\mathcal{H}\big] \leq L_{\mathcal{H}}$ for all $t \in [0,T]$, we can choose $L = \max\{ L_V, L_{\mathcal{H}} \}$ such that, for all $t \in [0,T]$
    \begin{equation*}
        \e^{-tL} W_1 (g_t(\cdot, \cdot), h_t(\cdot, \cdot)) \leq L\int_0^t \e^{-\tau L_V} W_1\big(g_{\tau}(\cdot,\cdot),h_{\tau}(\cdot,\cdot)\big) \,\rd\tau + W_1\big(g_0(\cdot,\cdot)-h_0(\cdot,\cdot)\big)\,.
    \end{equation*}
    Then using Gronwall's inequality, we have the following estimate
    \begin{equation*}
        W_1 (g_t(\cdot, \cdot), h_t(\cdot, \cdot)) \leq \e^{2tL} W_1\big(g_0(\cdot,\cdot)-h_0(\cdot,\cdot)\big), \quad \forall t \in [0,T].
    \end{equation*}
    This completes the proof.

\end{proof}


\subsubsection{Proof of the mean-field limit}

As a consequence of the well-posedness established in Theorem \ref{well-posed_P1UR}, we are able to offer a theoretical justification for the mean-field limit, i.e., Eq.~\eqref{P1UR}. Apart from the derivation via the BBGKY hierarchy, an alternative approach to obtaining the mean field equation is by assuming that the solution $g$ represents an empirical measure of a collection of particles, characterized as follows.

\begin{lemma}
    Consider the following dynamical system 
    \begin{equation}\label{XWG_small}
		\left\{
		\begin{aligned}
			& \frac{\rd}{\rd t} x_i =
			\xi(t,x_i,w_i), \quad i = 1,...,N,\\[6pt]
			& \frac{\rd}{\rd t} w_i = \mathcal{H}[g^N_t](t,x_i,w_i), \quad i = 1,...,N,
		\end{aligned}
		\right.
    \end{equation}
    where $\xi(t,x,w)$ and $\mathcal{H}(t,x,w)$ satisfy the Hypothesis~\ref{Hypo_xi} and ~\ref{Hypo_H}, respectively. Let $g^N_t(x,w): [0,T] \mapsto \mathcal{P}_1(\RR^3 \times \RR^3)$ be a probability measure defined as
    \begin{equation}\label{delta_g_t}
        g_t^{N}(x,w) := \frac{1}{N} \sum_{i=1}^{N} \delta(x-x_{i}(t)) \delta(w-w_{i}(t)) .
    \end{equation}
    If $x_i, w_i: [0,T] \mapsto \RR^3$, for $i = 1,...,N$, is a solution to Eqs.~\eqref{XWG_small}, then $g_t^N(x,w)$ is the measure-valued solution to Eq.~\eqref{P1UR} with the initial condition
    \begin{equation}\label{delta_g_initial}
        g_0^N(x,w) := \frac{1}{N} \sum_{i=1}^{N} \delta(x-x_{i}(0)) \delta(w-w_{i}(0)) .
    \end{equation}
\end{lemma}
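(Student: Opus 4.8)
The plan is to verify directly that the empirical measure $g_t^N$ coincides with the push-forward $\mathcal{T}^t_{\xi,\mathcal{H}[g^N]}\#g_0^N$ demanded by the definition \eqref{DefT}, and equivalently that it satisfies the distributional formulation of Eq.~\eqref{P1UR}. The crucial observation is that the particle equations \eqref{XWG_small} are nothing but the characteristic system \eqref{XWG} driven by the \emph{self-consistent} mean field $\mathcal{H}[g^N_t]$: each trajectory $t\mapsto(x_i(t),w_i(t))$ is an integral curve of the same time-dependent vector field $(\xi,\mathcal{H}[g^N_t])$ that defines the flow operator $\mathcal{T}^t_{\xi,\mathcal{H}[g^N]}$. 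Hence the particle flow sends $(x_i(0),w_i(0))$ to $(x_i(t),w_i(t))=\mathcal{T}^t_{\xi,\mathcal{H}[g^N]}(x_i(0),w_i(0))$, and pushing the atomic measure $g_0^N$ in \eqref{delta_g_initial} forward under this flow reproduces $g_t^N$ in \eqref{delta_g_t} exactly.

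First I would fix a test function $\zeta\in C_b^1(\RR^3\times\RR^3)$ and compute, using \eqref{delta_g_t},
\[
\frac{\rd}{\rd t}\int_{\RR^3\times\RR^3}\zeta\, g_t^N\,\rd x\,\rd w = \frac{1}{N}\sum_{i=1}^N\frac{\rd}{\rd t}\zeta(x_i(t),w_i(t)) = \frac{1}{N}\sum_{i=1}^N\big[\nabla_x\zeta\cdot\dot{x}_i+\nabla_w\zeta\cdot\dot{w}_i\big].
\]
Substituting \eqref{XWG_small} and re-expressing the resulting sum as an integral against $g_t^N$ gives
\[
\frac{\rd}{\rd t}\int_{\RR^3\times\RR^3}\zeta\, g_t^N\,\rd x\,\rd w = \int_{\RR^3\times\RR^3}\big[\xi(t,\cdot)\cdot\nabla_x\zeta + \mathcal{H}[g_t^N](t,\cdot)\cdot\nabla_w\zeta\big] g_t^N\,\rd x\,\rd w,
\]
which is precisely the weak form of Eq.~\eqref{P1UR}. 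Here I would record that for $\xi=w+A(I+tA)^{-1}x$ and $\mathcal{H}[g]=-\nabla U*\rho_g-A(I+tA)^{-1}w$ the phase-space field is divergence free, $\nabla_x\cdot\xi+\nabla_w\cdot\mathcal{H}[g]=\text{Tr}[A(I+tA)^{-1}]-\text{Tr}[A(I+tA)^{-1}]=0$, so that the non-divergence form of \eqref{P1UR} against which we test agrees with the conservative transport form carried by the empirical measure. Matching the value at $t=0$ against \eqref{delta_g_initial} then completes the identification of $g_t^N$ as a measure-valued solution.

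I expect the main obstacle to be making the self-consistent mean field $\mathcal{H}[g_t^N]$ legitimately act on the atomic measure $g_t^N$, rather than the weak-formulation bookkeeping, which is routine. Concretely, $E[g_t^N](t,x)=-\nabla U*\rho_{g_t^N}(t,x)=-\tfrac{1}{N}\sum_j\nabla U(x-x_j(t))$ must be well defined at the particle locations $x=x_i(t)$; this is where the regularity of $U$ entering Hypothesis~\ref{Hypo_E}, together with the convention on the self-interaction term $\nabla U(0)$, becomes essential, and it is also what reconciles \eqref{XWG_small} with the original $1/N$-scaled dynamics \eqref{simu_dynamic_xw_1N}. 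Once $\mathcal{H}[g_t^N]$ is seen to satisfy Hypothesis~\ref{Hypo_H} along the compactly supported particle configuration, the coupled system \eqref{XWG_small} possesses a well-defined flow and the characteristic identification above is rigorous, so that $g_t^N=\mathcal{T}^t_{\xi,\mathcal{H}[g^N]}\#g_0^N$ is the measure-valued solution of Eq.~\eqref{P1UR} in the sense of \eqref{DefT}.
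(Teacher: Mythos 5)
Your proposal is correct and follows essentially the same route as the paper's own derivation in Appendix B: differentiate the pairing of the empirical measure with a test function, substitute the particle dynamics, and recognize the $\frac{1}{N}$-sum of pairwise forces as the convolution $\nabla_x U * \rho_{g^N}$ evaluated along the particles, landing on the weak form of Eq.~\eqref{P1UR}. Your additional observations --- the push-forward identification $g_t^N=\mathcal{T}^t_{\xi,\mathcal{H}[g^N]}\#g_0^N$ in the sense of \eqref{DefT}, the divergence-free reconciliation of the conservative and non-divergence forms, and the $\nabla U(0)=0$ convention needed to match the self-interaction-free system \eqref{simu_dynamic_xw_1N} --- are points the paper leaves implicit, and they tighten rather than change the argument.
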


For the sake of completeness, the detailed derivation is provided in Appendix \ref{app:delta}. With the support of Theorem \ref{well-posed_P1UR}, we can rigorously justify the convergence of the measure-valued solution. 

\begin{theorem}[Convergence of the empirical measure] \label{conv}
   Under the Hypothesis~\ref{Hypo_xi} and~\ref{Hypo_H}, 
    for any initial datum $g_0 \in \mathcal{P}_c(\mathbb{R}^3 \times \mathbb{R}^3)$, consider a sequence of $g_0^N$ in the form of \eqref{delta_g_initial}  such that
    \begin{equation*}
        \lim_{N\rightarrow \infty} \mathcal{W}_1 \big(g_0^N(\cdot,\cdot), g_0(\cdot,\cdot)\big) = 0.
    \end{equation*}
    Let $g_t^N$ be given by \eqref{delta_g_t}, where $(x_i(t), w_i(t))$ solves Eqs.~\eqref{simu_dynamic_xw_1N} with initial conditions $(x_i(0), w_i(0))$. Then we have
    \begin{equation*}
        \lim_{N\rightarrow \infty} \mathcal{W}_1 \big(g_t^N(\cdot,\cdot), g_t(\cdot,\cdot)\big) = 0
    \end{equation*}
    for all $t \geq 0$, where $g_t(x,w)$ is the unique measure-valued solution to Eq.~\eqref{P1UR} with initial data $g_0(x,w)$.
\end{theorem}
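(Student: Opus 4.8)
The plan is to obtain the convergence as an almost immediate corollary of the stability estimate in Theorem~\ref{well-posed_P1UR}, once we identify the empirical measure $g_t^N$ as a genuine measure-valued solution of the mean-field equation Eq.~\eqref{P1UR}.

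First I would verify that the particle system Eqs.~\eqref{simu_dynamic_xw_1N} is precisely the abstract characteristic system Eqs.~\eqref{XWG_small} driven by the self-consistent empirical measure $g_t^N$. For the concrete operator $\mathcal{H}[g](t,x,w) = -\nabla U*\rho_g(t,x) - A(I+tA)^{-1}w$, the $x$-marginal of $g_t^N$ is $\rho_{g_t^N} = \frac1N\sum_j \delta(\cdot - x_j(t))$, so that $\nabla U*\rho_{g_t^N}(x_i) = \frac1N\sum_{j}\nabla U(|x_i-x_j|)$; the diagonal term $j=i$ contributes $\frac1N \nabla U(0) = 0$ for a $C^2$ radial potential, and the sum reduces to exactly the mean-field force in Eqs.~\eqref{simu_dynamic_xw_1N}. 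Hence the preceding Lemma applies and shows that $g_t^N$ defined in \eqref{delta_g_t} is the measure-valued solution of Eq.~\eqref{P1UR} with initial datum $g_0^N$ in \eqref{delta_g_initial}, while $g_t$ is the solution with datum $g_0$.

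Second, and this is the crux, I would secure a support bound that is uniform in $N$. Since $g_0^N \to g_0$ in $W_1$ and $g_0$ is compactly supported, one may take all the $g_0^N$ to be supported in a single ball $B_{R_0}$ (enlarging $R_0$ if needed). The characteristic flow $\mathcal{T}^t_{\xi,\mathcal{H}}$ obeys the growth control of Lemma~\ref{XtWt}, whose rate depends only on $C_\xi$, $C_{\mathcal H}$, $T$ and the initial radius $R_0$ --- never on $N$. Consequently the supports of $g_t^N$ and of $g_t$ are all contained in a fixed ball $B_{R(T)}$ for every $t\in[0,T]$, and the Lipschitz constant $L=\max\{L_P,L_{\mathcal H}\}$ entering the stability estimate may be chosen independently of $N$.

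Third, with this uniform control I would invoke the stability part of Theorem~\ref{well-posed_P1UR}, with the pair of initial data $(g_0, g_0^N)$, to get $W_1(g_t^N,g_t) \le \e^{2tL} W_1(g_0^N,g_0)$ for all $t\in[0,T]$. Taking the supremum over $t\in[0,T]$ and using that $t\mapsto \e^{2tL}$ is increasing yields $\mathcal{W}_1(g^N,g) \le \e^{2TL}\, W_1(g_0^N,g_0)$, whose right-hand side tends to $0$ as $N\to\infty$ by hypothesis. As $T>0$ is arbitrary, this gives the claimed convergence for all $t\ge 0$. The \emph{main obstacle} is precisely the second step: propagating a uniform-in-$N$ compact-support bound, since convergence in $W_1$ controls first moments but not supports; once that is settled --- either by the mild assumption that the approximating empirical measures are uniformly compactly supported, or by a quantitative appeal to the flow's growth estimate --- so that $R(T)$ and $L$ do not degenerate with $N$, the convergence follows in one line from stability.
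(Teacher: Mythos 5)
Your proposal is correct and follows exactly the route the paper intends: the paper gives no explicit proof of Theorem~\ref{conv}, but its stated logic is precisely yours --- the preceding lemma (proved in Appendix~\ref{app:delta}) identifies $g_t^N$ as a measure-valued solution of Eq.~\eqref{P1UR} with datum $g_0^N$, and then the stability estimate $W_1(g_t^N, g_t) \leq \e^{2tL} W_1(g_0^N, g_0)$ from Theorem~\ref{well-posed_P1UR} gives the convergence in one line. Your flagging of the uniform-in-$N$ support bound is a genuine subtlety the paper silently glosses over (as you note, $W_1$-convergence alone does not confine the supports of $g_0^N$ to a fixed ball, so one must either assume this or place the initial particles in $\mathrm{supp}\, g_0$, as in \cite{CCR2011}), and resolving it the way you propose is exactly what is needed to make the constant $L$ independent of $N$.
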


\section{A Boltzmann-type model for short-range interaction}
\label{sec:boltzmann}
	
In this section, we will derive the Boltzmann type equation based on the scaling in Eqs.~\eqref{simu_dynamic_xwG}.

	

\subsection{Derivation via BBGKY Hierarchy}
\label{sub:formal-homo-boltzmann}

As in the previous section, we begin with the Liouville equation Eq.~\eqref{BBGKY0}. However, in this case, we need to account for the range of interaction and define the marginals in the truncated domain $\RR^3 \setminus \{ Z_N,\ |x_i-x_j| \leq \ve, \ \text{for}\ i\neq j\}$. The marginals are now denoted as $\tilde{P}^{(s)} (t, Z_s)$ instead of $P^{(s)} (t, Z_s)$: 


\begin{equation*}
\begin{split}
    \tilde{P}^{(s)} (t, Z_s) := & \int_{\RR^{6(N-s)}} P^{(N)}(t, z_{1}, z_{2},...,z_{N}) \prod_{\substack{i\in [1,s] \\ j\in [s+1,N]}} \mathrm{1}_{|x_i-x_j|>\ve} \,\rd z_{s+1} ...\,\rd z_{N}\\
    = & \int_{\RR^{6(N-s)}} P^{(N)}(t, z_{1}, z_{2},...,z_{N})  \mathrm{1}_{X_N \in \mathcal{D}_N^s} \,\rd z_{s+1} ...\,\rd z_{N}\,,
\end{split}
\end{equation*}
where
\begin{equation*}
    \mathcal{D}_{N}^s:= \Big\{ (x_1,...,x_N) \in \RR^{3N} \Big| \ |x_i - x_j| > \ve,\ \forall (i,j) \in [1,s] \times [s+1,N] \Big\}\,,
\end{equation*}
and $Z_s$ is defined in \eqref{0628}. 
Our derivation follows the strategies outlined in King \cite{King1975} and Gallagher et al.\cite{GST2013}. When $s=1$, the truncated one-particle marginal $\tilde{P}^{(1)}$ is 
\begin{equation}\label{tP}
\begin{split}
    \tilde{P}^{(1)} (t, z_1) =& \int_{\RR^{6(N-1)}} P^{(N)}(t, z_{1}, z_{2},...,z_{N}) \prod_{j\geq 2}^{N} \mathrm{1}_{|x_1-x_j|>\ve} \,\rd z_{2} ...\,\rd z_{N}\\
    =& \int_{\RR^{6(N-1)}} P^{(N)}(t, z_{1}, z_{2},...,z_{N})  \mathrm{1}_{X_N \in \mathcal{D}_N^1} \,\rd z_{2} ...\,\rd z_{N}\,.
\end{split}
\end{equation}
Then our goal is to find the weak form satisfied by $\tilde{P}^{(1)}(t,z_1)$.  

To this end, we first derive a more general form satisfied by $s$-particle marginals $\tilde{P}^{(s)}(t, Z_s)$. Given a smooth and compactly supported function $\phi(t, Z_s)$ defined on $\RR_{+} \times \RR^{6s}$, we have, by considering Eqs.~\eqref{simu_dynamic_xwG} and starting from Eq.~\eqref{BBGKY0},
\begin{equation}\label{PN1}
\begin{split}
        \int_{\RR_{+} \times \RR^{6N}}& \Big( \frac{\partial P^{(N)}(t,Z_N)}{\partial t} + \sum_{i=1}^{N}  \left[ w_i\cdot \nabla_{x_i}  P^{(N)} + [A(I+tA)^{-1} x_i] \cdot \nabla_{x_i}P^{(N)} \right](t,Z_N)\\
		-  \sum_{i=1}^{N} & \Big[ \frac{1}{\ve}\sum_{\substack{j=1\\i\neq j}}^{N} G\left(\frac{|x_{i} - x_{j}|}{\varepsilon}\right)\cdot \nabla_{w_i} P^{(N)} + [A(I+tA)^{-1} w_i] \cdot \nabla_{w_i}P^{(N)} \Big](t,Z_N) \Big)\\ 
		&\times \phi(t,Z_s) \mathrm{1}_{X_N \in \mathcal{D}_N^s} \,\rd Z_N \,\rd t = 0 . 
\end{split}
\end{equation}
Then the equation satisfied by $\tilde{P}^{(s)}(t, Z_s)$ follows from integration by parts. More precisely, we denote $X_s := (x_1,...,x_s) \in \RR^{3s}$ and $W_s := (w_1,...,w_s) \in \RR^{3s}$.

(I) For the first term on the time derivative of $P^{(N)}$, we have

\begin{equation*}
\begin{split}
    &\int_{\RR_{+} \times \RR^{6N}} \frac{\partial P^{(N)}(t,Z_N)}{\partial t} \phi(t,Z_s) \mathrm{1}_{X_N \in \mathcal{D}_N^s} \,\rd Z_N \,\rd t \\
    = &- \int_{\RR^{6N}} P^{(N)}(0,Z_N) \phi(0,Z_s) \mathrm{1}_{X_N \in \mathcal{D}_N^s} \,\rd Z_N - \int_{\RR_{+} \times \RR^{6N}} P^{(N)}(t,Z_N)\frac{\partial \phi(t,Z_s)}{\partial t}\mathrm{1}_{X_N \in \mathcal{D}_N^s} \,\rd Z_N \,\rd t\\
    =& - \int_{\RR^{6s}} \tilde{P}^{(s)}(0,Z_s) \phi(0,Z_s)  \,\rd Z_s - \int_{\RR_{+} \times \RR^{6s}} \tilde{P}^{(s)}(t,Z_s)\frac{\partial \phi(t,Z_s)}{\partial t} \,\rd Z_s \,\rd t,
\end{split}
\end{equation*}
where we notice the definition of $\tilde{P}^{(s)}$ in the second equality.\\

(II) For the second term on the spatial derivative of $P^{(N)}$, we define, for any coupled index $(i,j) \in [1,N] \times [1,N] $,
\begin{equation*}
    \Sigma_{N}^{s}(i,j):= \Big\{ X_N \in \RR^{3N}, \ |x_i - x_j| = \ve \ \Big| \ \forall (k,l) \in [1,s]\times [s+1,N] /\{i,j\}, \ |x_k-x_l|>\ve \Big\}\,,
\end{equation*}
which is a smooth sub-manifold of $\{ X_N \in \RR^{3N},\ |x_i - x_j| = \ve \}$. If denoting $\rd \sigma_N^{i,j}$ as its surface measure and $n^{i,j}$ the outward normal vector to $\Sigma_N^s(i,j)$, we obtain via integration by parts,
\begin{equation*}
    \begin{split}
        &\sum_{i=1}^{N} \int_{\RR_{+} \times \RR^{6N}} w_i\cdot \nabla_{x_i}   P^{(N)}(t,Z_N)  \phi(t,Z_s) \mathrm{1}_{X_N \in \mathcal{D}_N^s} \,\rd Z_N \,\rd t \\
        =& - \sum_{i=1}^s\int_{\RR_{+} \times \RR^{6N}} w_i P^{(N)}(t,Z_N) \cdot \nabla_{x_i}\phi(t,Z_s) \mathrm{1}_{X_N \in \mathcal{D}_N^s} \,\rd Z_N \,\rd t\\
        &+ \sum_{\substack{i,j = 1\\ i \neq  j}}^N\int_{\RR_{+} \times \RR^{3N} \times \Sigma_N^s(i,j)}n^{i,j}\cdot W_N P^{(N)}(t,Z_N)\phi(t,Z_s)\,\rd \sigma_N^{i,j} \,\rd W_N \,\rd t \\
        =& - \sum_{i=1}^s\int_{\RR_{+} \times \RR^{6N}} w_i P^{(N)}(t,Z_N)  \cdot \nabla_{x_i}\phi(t,Z_s) \mathrm{1}_{X_N \in \mathcal{D}_N^s} \,\rd Z_N \,\rd t\\
        & +(N-s) \sum_{i=1}^s\int_{\RR_{+} \times \RR^{3N} \times \Sigma_N^s(i,j)} \frac{\nu^{i,s+1}}{\sqrt{2}}\cdot (w_{s+1} - w_i) P^{(N)}(t,Z_N)\phi(t,Z_s)\,\rd \sigma_N^{i,j} \,\rd W_N \,\rd t,
    \end{split}
\end{equation*}
where $\nu^{i,j} = \frac{x_i-x_j}{|x_i-x_j|}$ and we have used the fact that  $P^{(N)}$ satisfies the permutation invariance, i.e., $P^{(N)}(t,Z_{\sigma(N)}) = P^{(N)}(t, Z_N)$.

Similarly, 
\begin{equation*}
    \begin{split}
        &\sum_{i=1}^{N} \int_{\RR_{+} \times \RR^{6N}} [A(I+tA)^{-1} x_i] \cdot \nabla_{x_i} P^{(N)}(t,Z_N)  \phi(t,Z_s) \mathrm{1}_{X_N \in \mathcal{D}_N^s} \,\rd Z_N \,\rd t \\
        =& - \sum_{i=1}^s\int_{\RR_{+} \times \RR^{6s}} \tilde{P}^{(s)}(t,Z_s) [A(I+tA)^{-1} x_i] \cdot \nabla_{x_i}\phi(t,Z_s) \,\rd Z_s \,\rd t \\
         &-\sum_{i=1}^s\int_{\RR_{+} \times \RR^{6s}} \text{Tr}[A(I+tA)^{-1}]\tilde{P}^{(s)}(t,Z_s)  \phi(t,Z_s) \,\rd Z_s \,\rd t \\
        &+ (N-s) \sum_{i=1}^s\int_{\RR_{+} \times \RR^{3N} \times \Sigma_N^s(i,j)} \frac{\nu^{i,s+1}}{\sqrt{2}}\cdot [A(I+tA)^{-1}(x_{s+1} - x_i)] P^{(N)}(t,Z_N)\phi(t,Z_s)\,\rd \sigma_N^{i,j} \,\rd W_N \,\rd t.
    \end{split}
\end{equation*}

(III) For the third term including the potential, we split the sum into two parts:
\begin{equation*}
    \begin{split}
         &\frac{1}{\ve} \sum_{i=1}^{N} \sum_{\substack{j=1\\i\neq j}}^{N}\int_{\RR_{+} \times \RR^{6N}}   \nabla_{x_i}U\left(\frac{|x_{i} - x_{j}|}{\varepsilon}\right)\cdot \nabla_{w_i} P^{(N)}  (t,Z_N) 
		 \phi(t,Z_s) \mathrm{1}_{X_N \in \mathcal{D}_N^s} \,\rd Z_N \,\rd t\\
        =& \frac{1}{\ve}    \sum_{\substack{i,j=1\\i\neq j}}^{s}\int_{\RR_{+} \times \RR^{6N}}   \nabla_{x_i}U\left(\frac{|x_{i} - x_{j}|}{\varepsilon}\right)\cdot \nabla_{w_i} P^{(N)}  (t,Z_N) 
		 \phi(t,Z_s) \mathrm{1}_{X_N \in \mathcal{D}_N^s} \,\rd Z_N \,\rd t\\
        & + \frac{1}{\ve}    \sum_{\substack{i,j=s+1\\i\neq j}}^{N}\int_{\RR_{+} \times \RR^{6N}}   \nabla_{x_i}U\left(\frac{|x_{i} - x_{j}|}{\varepsilon}\right)\cdot \nabla_{w_i} P^{(s)}  (t,Z_s) 
		 \phi(t,Z_s) \mathrm{1}_{X_N \in \mathcal{D}_N^s} \,\rd Z_N \,\rd t,
    \end{split}
\end{equation*}
where the second term on the right-hand side vanishes due to the appearance of $\nabla_{w_i}\phi(t,Z_s)$ for $i=s+1,...,N$ after integration by parts. Therefore, it becomes
\begin{equation*}
    \begin{split}
         &\frac{1}{\ve} \sum_{i=1}^{N}    \sum_{\substack{j=1\\i\neq j}}^{N}\int_{\RR_{+} \times \RR^{6N}}   \nabla_{x_i}U\left(\frac{|x_{i} - x_{j}|}{\varepsilon}\right)\cdot \nabla_{w_i} P^{(N)}  (t,Z_N) 
		 \phi(t,Z_s) \mathrm{1}_{X_N \in \mathcal{D}_N^s} \,\rd Z_N \,\rd t\\
        =& -\frac{1}{\ve}  \sum_{\substack{i,j=1\\i\neq j}}^{s}\int_{\RR_{+} \times \RR^{6N}}   \nabla_{x_i}U\left(\frac{|x_{i} - x_{j}|}{\varepsilon}\right) \cdot \nabla_{w_i} 
		 \phi(t,Z_s) \tilde{P}^{(s)}(t,Z_s) \,\rd Z_s \,\rd t.
    \end{split}
\end{equation*}

(IV) For the fourth term on the derivative of $w$:
\begin{equation*}
\begin{split}
    &\sum_{i=1}^{N} \int_{\RR_{+} \times \RR^{6N}} [A(I+tA)^{-1} w_i] \cdot \nabla_{w_i} P^{(N)}(t,Z_N) \phi(t,Z_s) \mathrm{1}_{X_N \in \mathcal{D}_N^s} \,\rd Z_N \,\rd t  \\
    =&-\sum_{i=1}^s\int_{\RR_{+} \times \RR^{6s}} \tilde{P}^{(s)}(t,Z_s) [A(I+tA)^{-1} w_i] \cdot \nabla_{w_i}\phi(t,Z_s) \,\rd Z_s \,\rd t \\
    &\qquad\qquad\qquad\qquad\qquad\qquad-\sum_{i=1}^s\int_{\RR_{+} \times \RR^{6s}} \text{Tr}[A(I+tA)^{-1}]\tilde{P}^{(s)}(t,Z_s)  \phi(t,Z_s) \,\rd Z_s \,\rd t,
\end{split}
\end{equation*}
where the integration by parts is applied in the equality above.

Combining the terms (I)-(IV) altogether, Eq.~\eqref{PN1} becomes
\begin{equation}\label{PN2}
\begin{split}
        &\int_{\RR_{+} \times \RR^{6s}} \tilde{P}^{(s)}(t,Z_s) \Big[ \partial_t \phi + \sum_{i=1}^s \left( w_i \nabla_{x_i}\phi + [A(I+tA)^{-1}x_i]\cdot \nabla_{x_i}\phi - [A(I+tA)^{-1}w_i] \cdot \nabla_{w_i}\phi \right) \\
        &\qquad \qquad - \frac{1}{\ve}\sum_{\substack{j=1\\i\neq j}}^{s} \nabla_{x_i}U\left(\frac{|x_{i} - x_{j}|}{\varepsilon}\right)\cdot  \nabla_{w_i}\phi  \Big](t,Z_s) \,\rd Z_s \,\rd t \\
        =& - \int_{\RR^{6s}} \tilde{P}^{(s)}(0,Z_s) \phi(0,Z_s)\,\rd Z_s \\
       & - (N-s) \sum_{i=1}^s \int_{\RR_{+} \times \RR^{3N} \times \Sigma_N^s(i,s+1)} \frac{\nu^{i,s+1}}{\sqrt{2}}\cdot (w_{s+1} - w_i) P^{(N)}(t,Z_N)\phi(t,Z_s)\,\rd \sigma_N^{i,s+1} \,\rd V_N \,\rd t\\
        &+ (N-s) \sum_{i=1}^s \int_{\RR_{+} \times \RR^{3N} \times \Sigma_N^s(i,s+1)} \frac{\nu^{i,s+1}}{\sqrt{2}}\cdot [A(I+tA)^{-1}(x_{s+1} - x_i)] P^{(N)}(t,Z_N)\phi(t,Z_s)\,\rd \sigma_N^{i,s+1} \,\rd V_N \,\rd t \\
        =& - \int_{\RR^{6s}} \tilde{P}^{(s)}(0,Z_s) \phi(0,Z_s)\,\rd Z_s - (N-s) \ve^{2} \int_{\RR_{+} \times \RR^{6s} } \mathcal{Q}_{s,s+1}(P^{(s+1)})(t,Z_s) \phi(t,Z_s) \,\rd Z_s \,\rd t\\
        & + (N-s) \ve^{3} \int_{\RR_{+} \times \RR^{6s} } \mathcal{Q}'_{s,s+1} (P^{(s+1)})(t,Z_s) \phi(t,Z_s) \,\rd Z_s \,\rd t, \\
\end{split}
\end{equation}
where the derivation of the term $\mathcal{Q}_{s,s+1}(P^{(s+1)})$ in the last equality directly follows \cite[Paragraph 9.3-9.4]{GST2013}, which neglects higher-order interactions except for those between binary particles:
\begin{multline*}
    \mathcal{Q}_{s,s+1}(\tilde{P}^{(s+1)})(t,Z_s):= \sum_{i=1}^s \int_{\RR^3} \int_{\mathbb{S}^{2}} b(w_i-w_{s+1},\omega_{i,s+1}) \times  \\
    \Big[ P^{(s+1)}(t,x_1,w_1,...,x_i,w'_i,...,x_s,w_s,w'_{s+1}) - P^{(s+1)}(t,Z_s,x_i,w_{s+1}) \Big] \,\rd \omega_{i,s+1} \,\rd w_{s+1},
\end{multline*}
and similarly,
\begin{multline*}
    \mathcal{Q}'_{s,s+1}(\tilde{P}^{(s+1)})(t,Z_s):= \sum_{i=1}^s \int_{\RR^3} \int_{\mathbb{S}^{2}} b\left(A(I+tA)^{-1}\frac{x_i-x_{s+1}}{|x_i-x_{s+1}|}, \omega_{i,s+1}\right) \times  \\
    \Big[ P^{(s+1)}(t,x_1,w_1,...,x_i,w'_i,...,x_s,w_s,w'_{s+1}) - P^{(s+1)}(t,Z_s,x_i,w_{s+1}) \Big] \,\rd \omega_{i,s+1} \,\rd w_{s+1}.
\end{multline*}
Here $(w'_i, w'_{s+1})$ is obtained from $(w_i, w_{s+1})$ by applying the inverse scattering operator $\sigma_{\ve}$ defined in \cite[Definition 8.2.1]{GST2013},
i.e.,
\begin{equation*}
    \sigma_{\ve}: (x_i,w_i,x_{s+1},w_{s+1}) \in \mathcal{S}_{\ve}^{-} \mapsto (x'_i,w'_i,x'_{s+1},w'_{s+1}) \in \mathcal{S}_{\ve}^{+},
\end{equation*}
where 
\begin{equation*}
    \begin{split}
        \mathcal{S}_{\ve}^{\pm} :=& \big\{ (x_i,w_i,x_{s+1},w_{s+1}) \in \mathbb{R}^{4\times 3} \, \big| \, |x_i-x_{s+1}|=\ve, \, \pm(x_i - x_{s+1})\cdot(w_i-w_{s+1}) >0  \big\},\\[2pt]
        x'_i :=& -x_i + \omega_{i,s+1}\cdot(x_i-x_{s+1})\omega_{i,s+1} + \frac{\ve \tau_*}{2}(w_i+w_{s+1}), \\[2pt]
        x'_{s+1} :=& -x_{s+1} - \omega_{i,s+1}\cdot(x_i-x_{s+1})\omega_{i,s+1} + \frac{\ve \tau_*}{2}(w_i+w_{s+1}),\\[2pt]
        w'_i :=& w_i - \omega_{i,s+1}\cdot(w_i - w_{s+1})\omega_{i,s+1}, \quad w'_{s+1} := w_{s+1} + \omega_{i,s+1}\cdot(w_i - w_{s+1})\omega_{i,s+1},
    \end{split}
\end{equation*}
with $\tau_*$ the microscopic interaction time \cite[(8.1.10)]{GST2013}.
And collision kernel $b$ has the same definition as in \cite[Definition 8.3.3 and Eq.~(8.3.5)]{GST2013}, or more specifically,
\begin{equation*}
    \nu^{i,s+1} \cdot (w_i-w_{s+1}) \,\rd \sigma_N^{i,s+1} = \frac{1}{\ve} (x_i - x_{s+1}) \cdot (w_i-w_{s+1}) \,\rd \sigma_N^{i,s+1} = \ve^{2} b(w_i - w_{s+1}, \omega_{i,s+1}) \,\rd \omega_{i,s+1}.
\end{equation*}



Taking the Boltzmann-Grad limit $N\ve^{2} \rightarrow O(1)$ (or in the case of general dimension $d$, $N\ve^{d-1} \rightarrow O(1)$ ) as $N \rightarrow \infty$ and $\ve \rightarrow 0$, the integral term involving $\mathcal{Q}'_{s,s+1}$ in Eq.~\eqref{PN2} vanishes, since $N\ve^{3} \rightarrow 0$ in such a limit. Therefore, formally $\tilde{P}^{(s)}(t,Z_s)$ satisfies the following equation in the weak sense: 
\begin{equation}\label{PsB}
\begin{split}
     &\frac{\partial \tilde{P}^{(s)}}{\partial t}  + \sum_{i=1}^s \Big( w_i\cdot \nabla_{x_i} \tilde{P}^{(s)} + [A(I+tA)^{-1}x_i] \cdot \nabla_{x_i}\tilde{P}^{(s)} - [A(I+tA)^{-1}w_i] \cdot \nabla_{w_i}\tilde{P}^{(s)}  \Big)\\
     & \qquad \qquad - \frac{1}{\ve} \sum_{\substack{i,j=1\\i \neq j}}^s \nabla_{x_i}U \left(\frac{|x_{i} - x_{j}|}{\varepsilon}\right) \cdot \nabla_{w_i}\tilde{P}^{(s)} = \mathcal{Q}_{s,s+1}(\tilde{P}^{(s+1)}).
\end{split}
\end{equation}

In particular, when $s=1$ in Eq.~\eqref{PsB}, we have the corresponding equation for one-particle distribution function $\tilde{P}^{(1)} (t, z_1)$:
\begin{multline}\label{p1t}
    \frac{\partial \tilde{P}^{(1)}(t,z_1)}{\partial t} + w_1\cdot \nabla_{x_1}\tilde{P}^{(1)}(t,z_1) + [A(I+tA)^{-1} x_1] \cdot \nabla_{x_1} \tilde{P}^{(1)}(t,z_1) 
    \\ -  [A(I+tA)^{-1} w_1] \cdot \nabla_{w_1} \tilde{P}^{(1)}(t,z_1) 
    = \mathcal{Q}_{1,2}(\tilde{P}^{(2)})(t,z_1),
\end{multline}
where the collision operator $\mathcal{Q}_{1,2}(P^{(2)})$ is
\begin{equation*}
    \mathcal{Q}_{1,2}(\tilde{P}^{(2)})(t,z_1):= \int_{\RR^3} \int_{\mathbb{S}^{2}} b(w_1-w_2,\omega_{1,2})  \Big[ \tilde{P}^{(2)}(t,x_1,w'_1,x_2,w'_2) - \tilde{P}^{(2)}(t,x_1,w_1,x_2,w_2) \Big] \,\rd \omega_{1,2} \,\rd w_{2} \,.
\end{equation*}
At this stage, one can find that Eq.~\eqref{p1t} above is still not solvable due to the existence of $\tilde{P}^{(2)}$ on the right-hand side. Hence, to close up the hierarchy, we introduce the following ``propagation of chaos" assumption \cite[p.~12]{Villani02},
\begin{equation*}
    \tilde{P}^{(2)}(t,x_1,w_1,x_2,w_2) =  \tilde{P}^{(1)}(t,x_1,w_1) \tilde{P}^{(1)}(t,x_2,w_2)\,,
\end{equation*}
which implies the un-correlation of velocities of two particles that are about to collide. Then, 
\begin{multline*}
    \mathcal{Q}_{1,2}(\tilde{P}^{(2)}) = \mathcal{Q}_{1,2}(\tilde{P}^{(1)}, \tilde{P}^{(1)}) \\
    = \int_{\RR^3} \int_{\mathbb{S}^{2}} b(w_1-w_2,\omega_{1,2})  \Big[ \tilde{P}^{(1)}(t,x_1,w'_1)  \tilde{P}^{(1)}(t,x_2,w'_2) - \tilde{P}^{(1)}(t,x_1,w_1)  \tilde{P}^{(1)}(t,x_2,w_2) \Big] \,\rd \omega_{1,2} \,\rd w_{2} \,.
\end{multline*}

Finally, after considering the homogeneity in $x$ and re-naming $w$ as $w_1$, $w_*$ as $w_2$, and $\omega_{1,2}$ as $\omega$, we arrive at the homo-energetic Boltzmann equation Eq.~\eqref{g}, where $g(t,w) = \tilde{P}^{(1)} (t,z_1) = \tilde{P}^{(1)} (t,x_1,w_1)$.

We mention that the above derivation is formal, and to make it rigorous, the main difficulty lies in the need to justify the propagation of chaos. Additionally, we need to 
construct an appropriate functional space and obtain some \textit{a priori} uniform estimates in the BBGKY hierarchy, especially with the additional term $L(t)w \cdot \nabla_w \tilde P^{(1)}$ in the homo-energetic equation. We refer to \cite[Part III]{GST2013} for the related work about the classical Boltzmann equation.

\subsection{Properties of the homo-energetic Boltzmann equation} \label{sec:prop}
In contrast to the classical Boltzmann equation Eq.~\eqref{BE}, the Eq.~\eqref{g} derived from OMD  has been effectively reduced in dimension due to spatial homogeneity, similar to the principles of microscopic molecular dynamics. Its solution, often called 
homo-energetic solution, can be regarded as a special type of solution to the full Boltzmann equation Eq.~\eqref{BE}.

The existence and uniqueness of the homo-energetic solution to Eq.~\eqref{g} in $L^1$ were initially established by C.~Cercignani in a specific instance of the deformation matrix $L(t)$ known as the shear affine flow, as discussed in \cite{Cercignani1989}. This analysis considered the collision operator with an angular cutoff cross-section, such as the hard-sphere model. In the case of the Maxwellian molecule, corresponding to $\gamma = 0$ in the kinetic part of the collision kernel $B$, the hyperbolic effect $L(t)w \cdot \nabla_{w}g$ and the collision effect $\mathcal{Q}(g,g)$ exhibit similar magnitudes. The well-posedness theory of the solution in the general case has been demonstrated within the class of Radon measures by James et al. \cite{JNV2019} or under the Fourier transform framework by Bobylev et al. \cite{BNV2020}.



One of the fundamental distinctions between the solution to the classical Boltzmann equation and the homo-energetic solution to Eq.~\eqref{g} lies in their behavior at large times. It is well known that the solution to the homogeneous Boltzmann equation converges to the global Maxwellian equilibrium, determined by the initial condition. However, for the homo-energetic equation Eq.~\eqref{g}, due to the presence of the deformation matrix $L(t)$ and its associated viscous heating effect, the equilibrium is no longer Maxwellian and the energy (or temperature) of the system steadily increases with time.

 In fact, the large-time behavior of the homo-energetic solution varies depending on the interplay between the hyperbolic term $L(t)w\cdot\nabla_{w}g$ and the collision term $\mathcal{Q}(g,g)$. In the case of the Maxwell molecule, where the collision kernel exhibits zero homogeneity, a distinct self-similar profile has been observed \cite{MT2019_Rescaled}. This self-similar distribution differs from the Maxwellian distribution and is characterized by a polynomial decay of velocity at the tails. This behavior has been numerically confirmed in previous studies \cite{GS2003}.

	More particularly, consider the self-similar transformation,
	\begin{equation}\label{self-similar-g}
	    g(t,w) = \e^{-3\beta t}G\left(\frac{w}{\e^{\beta t}}\right)\,,
	\end{equation}
	 Eq.~\eqref{g} can be re-written as
	\begin{equation}\label{steady1}
	    -\beta \nabla_{w}\cdot(wG) - \nabla_{w}\cdot (LwG) = \mathcal{Q}(G,G)\,,
	\end{equation}
	where $L \in M_{3\times 3}(\mathbb{R}), \beta \in \mathbb{R}$. Note that the intuition of self-similar scaling \eqref{self-similar-g} comes from the dimensionless analysis, see more details in \cite[p.~818]{JNV2019}. Then its well-posedness is established in the following theorem. 
 
	\begin{theorem}{\cite{JNV2019}}
	Let $B = b(\cos\theta)$ be the collision kernel for Maxwellian molecules and $\int_{-1}^{1}b(x)x^2(1-x^2)\,\rd x$ be strictly positive. There exists a sufficiently small $k_0>0$ such that, for any $\zeta \geq 0$ and any $L \in M_{3\times 3}(\mathbb{R})$ satisfying $\|L\| \leq k_0 b$, there exists $\beta \in \mathbb{R}$ and $G(w)$ that solve Eq.~\eqref{steady1} in the sense of measure and satisfy 
	\begin{equation*}
	    \int_{\mathbb{R}^3} G(w) \,\rd w = 1,\ \int_{\mathbb{R}^3} w_j G(w) \,\rd w = 0,\ 
	    \int_{\mathbb{R}^3} |w|^2 G(w) \,\rd w = \zeta\,.
	\end{equation*}
	\end{theorem}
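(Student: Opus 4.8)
The plan is to reduce the statement to a stationary problem for the profile $G$ alone, and then solve that problem by a fixed-point argument carried out in Fourier variables, exploiting the special algebraic structure of the Maxwellian collision operator. First I would record the self-similar reduction: substituting $g(t,w) = \e^{-3\beta t} G(\e^{-\beta t} w)$ into Eq.~\eqref{g} and using that for Maxwellian molecules the kernel $B = b(\cos\theta)$ is independent of $|v-v_*|$, the collision operator scales homogeneously of the same degree as the transport terms, so $Q(g,g)$ collapses to $Q(G,G)$ at the rescaled argument. This yields exactly Eq.~\eqref{steady1} (the $g$ on its right-hand side being read as $Q(G,G)$), in which $\beta \in \RR$ and the measure $G$ are the only unknowns, subject to the three normalizations $\int G = 1$, $\int w_j G = 0$, $\int |w|^2 G = \zeta$.

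Next I would pass to the characteristic function $\hat G(\xi) = \int_{\RR^3} \e^{-\im\,\xi\cdot w} G(w)\,\rd w$. By Bobylev's identity the collision term becomes a closed bilinear expression involving $\hat G(\xi^+)\hat G(\xi^-)$ integrated against $b$ over $\mathbb{S}^2$, while the two drift terms $-\beta\,\nabla_w\cdot(wG)$ and $-\nabla_w\cdot(LwG)$ transform into the first-order operators $\beta\,\xi\cdot\nabla_\xi \hat G$ and $(L^\top\xi)\cdot\nabla_\xi \hat G$. The moment constraints become local conditions at the origin, namely $\hat G(0)=1$, $\nabla_\xi\hat G(0)=0$, and $\Delta_\xi\hat G(0) = -\zeta$. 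Taking the $|w|^2$-moment of Eq.~\eqref{steady1} and using collisional energy conservation produces the scalar compatibility relation $\beta\,\zeta = -\int_{\RR^3} w^\top L w\, G(w)\,\rd w$, which determines $\beta$ as a functional of $G$ (with leading value $\beta \approx -\tfrac13\,\mathrm{Tr}\,L$). The hypothesis $\int_{-1}^{1} b(x)\,x^2(1-x^2)\,\rd x > 0$ enters precisely here: it guarantees that the linearized Maxwellian collision operator is strictly dissipative on the block of trace-free second moments, so the anisotropic part of $G$ forced by $L$ is uniquely recoverable.

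I would then set up the fixed point in the complete metric space of characteristic functions of probability measures carrying the prescribed first three moments, equipped with the Fourier-based distance $d_2(\mu,\nu) = \sup_{\xi\neq 0} |\hat\mu(\xi)-\hat\nu(\xi)|/|\xi|^2$, in which the Maxwellian collision operator is a strict contraction by the Gabetta--Toscani--Wennberg theory. Integrating the transport operator along the characteristics of the linear field $\xi \mapsto (\beta I + L^\top)\xi$ rewrites Eq.~\eqref{steady1} in a mild fixed-point form $\hat G = \Phi[\hat G]$, with $\beta$ slaved to $\hat G$ through the compatibility relation above. The smallness assumption $\|L\| \leq k_0 b$ with $k_0$ small controls simultaneously the drift of the characteristic flow and the magnitude of $\beta$, so that the perturbation of the collisional contraction is of size $O(k_0)$ and $\Phi$ remains a contraction; its unique fixed point gives $(\beta, G)$, and the Fourier representation transfers back to a measure solution of Eq.~\eqref{steady1}.

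The hard part will be ensuring that the fixed point is a genuine probability measure with exactly the finite prescribed energy $\zeta$: one must keep $\hat G$ within the class of valid characteristic functions (Bochner positivity) while controlling the transport flow, whose eigenvalues $\beta + \lambda_i(L)$ may have either sign and could, without a smallness restriction, produce resonances or destroy the uniform second-moment bound. I expect the smallness condition $\|L\|\leq k_0 b$ to be exactly what confines the self-consistent $\beta$ and the characteristic flow to the regime where the collisional contraction in $d_2$ dominates the drift, thereby closing both the invariance of the admissible set and the contraction estimate at once.
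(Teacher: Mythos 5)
You should first note that the paper contains no proof of this statement: the theorem is quoted from \cite{JNV2019}, so the comparison is against that source (and against \cite{BNV2020}, which the paper cites immediately afterwards as an alternative ``Fourier method'' proof). Your proposal is, in essence, a reconstruction of the Bobylev--Nota--Vel\'azquez route, not of the route in \cite{JNV2019}. The proof in \cite{JNV2019} works directly with probability measures in self-similar variables: one shows that the rescaled evolution preserves a convex, weak-$*$ compact set of measures with normalized mass, momentum and energy and uniformly bounded higher moments --- the positivity of $\int_{-1}^{1}b(x)x^{2}(1-x^{2})\,\rd x$ and the smallness $\|L\|\leq k_{0}b$ enter exactly to make collisional relaxation in the closed second- and fourth-moment equations dominate the forcing by $L$ --- and then applies a Schauder--Tychonoff-type fixed-point argument to the semigroup, with $\beta$ fixed self-consistently by stationarity of the energy. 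That argument is soft: it yields existence without uniqueness, but it never needs to verify that any iterate is a bona fide characteristic function. Your route is instead quantitative and constructive: Bobylev's identity, the mild formulation along the characteristics of $\xi\mapsto(\beta I+L^{\top})\xi$, and a fixed point in Fourier-based metrics; your identification of the compatibility relation $\beta\zeta=-\int_{\RR^3} w^{\top}Lw\,G(w)\,\rd w$ and of the role of the $x^{2}(1-x^{2})$ moment (dissipativity on the trace-free second-moment block) is correct, and the program is viable since it is essentially how \cite{BNV2020} proceed. One genuine technical correction, however: the normalized Maxwellian gain operator is only \emph{non-expansive} in $d_{2}$ (the Gabetta--Toscani--Wennberg bound gives Lipschitz constant exactly $1$ there); strict contraction holds only in $d_{2+\alpha}$, $\alpha>0$, and only on classes of measures whose full second-moment tensors coincide. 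Since the linear characteristic flow expands such distances by factors of order $\e^{O(\|L\|+|\beta|)s}$, you need that strict gain to absorb the drift, which forces you to solve the closed second-moment system (and determine $\beta$) first and to build the resulting anisotropic moment tensor into your admissible set; with that adjustment your sketch closes, but as stated the $d_{2}$ contraction claim does not.
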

	
	In addition to \cite{JNV2019}, the existence of the self-similar profile was also established by Bobylev, Nota, and Velazquez in \cite{BNV2020} using the Fourier method. Furthermore, in \cite{DL2021_Shear}, a smooth self-similar solution with $C^{\infty}$-regularity and dependence on a small shear force was demonstrated based on a perturbative approach.

	\begin{remark}
    It is important to note that the existence of self-similar solutions is not limited to the homo-energetic solution of Eq.~\eqref{g}, but also applies to the classical Boltzmann equation without deformation forces. However, in the absence of deformation forces, the existence of self-similar solutions is restricted by the energy conservation property. Specifically, self-similar solutions can only be demonstrated when they possess an infinite second-order moment \cite{BC2002selfsimilarapplication}. This condition holds for certain cases, such as the inelastic Boltzmann equation in granular materials \cite{BCT2003,BC2003}. Moreover, the dynamic stability of these infinite energy self-similar profiles has been established in \cite{MYZ2017convergence}.
	\end{remark}

	
\section{From Mesoscopic regime to macroscopic regime}
\label{sec:hydro}

In this section, our focus will be on investigating the hydrodynamic limit for the kinetic model induced by OMD, which bridges the gap between the mesoscopic regime and the macroscopic regime.

\subsection{Universal conservation laws}
   We begin by revisiting the macroscopic quantities that arise from classical fluid mechanics and re-formulate them within the context of homo-energetic flow \eqref{ansatz}. Notably, owing to the homogeneity in $x$ of $g(t,w)$, the macroscopic quantities will solely be time-dependent.

     Noting that in \eqref{ansatz} that if we begin with $\int_{\mathbb{R}^{3}} g(0,w) w \rd w=0$, then the first moment of $g$ remains zero, i.e., $\int_{\mathbb{R}^{3}} g(t,w) w \rd w=0$. This condition is maintained throughout the derivation, as explained in \cite[Remark 2.2]{JNV2020}. Therefore, in the subsequent analysis, we consistently assume that the first moment of $g$ is zero.

	\begin{itemize}
		\item Density $\rho(t,x)$:
		\begin{equation}\label{rho}
			\rho(t,x) = \int_{\mathbb{R}^{3}} f(t,x,v) \,\rd v = \int_{\mathbb{R}^{3}} g(t,w) \,\rd w  =: \rho(t)\,.
		\end{equation}
	
		\item The bulk velocity $u(t,x)$:
		\begin{equation*}
		\begin{split}
		    u(t,x) =& \frac{1}{\rho(t,x)}\int_{\mathbb{R}^{3}} f(t,x,v) v \,\rd v\\
		    =& \frac{1}{\rho(t)}\int_{\mathbb{R}^{3}} g(t,w) [w + L(t)x] \,\rd w \\
		    =& \frac{1}{\rho(t)}\int_{\mathbb{R}^{3}} g(t,w) w \,\rd w + [L(t)x]\frac{1}{\rho(t)}\int_{\mathbb{R}^{3}} g(t,w)   \,\rd w \\
		    =& L(t)x\,.
		\end{split}
		\end{equation*}
  

		\item Internal energy $ e(t,x) $ and temperature $\theta(t,x)$:
		\begin{equation*}
			\begin{split}
				\rho(t,x) e(t,x) =& \frac{1}{2}\int_{\mathbb{R}^{3}} f(t,x,v) |v-u(t,x)|^{2} \,\rd v \\
				 =& \frac{1}{2} \int_{\mathbb{R}^{3}} g(t,w) |w|^{2} \,\rd w\\
				 =: &\rho(t) e(t)
			\end{split}
		\end{equation*}
  
		
  Consider the equation of state for perfect gas in three dimensions that 
		\begin{equation*}
			e(t,x) = \frac{k_{B}\theta(t,x)}{\gamma_{a} - 1} = \frac{3}{2} \theta(t,x)\,,
		\end{equation*}
		where $ \theta $ is temperature, $ k_{B}=1 $ is Boltzmann constant, and $ \gamma_{a}=1+\frac{2}{d}=\frac{5}{3} $ is the adiabatic exponent. Then the temperature $ \theta (t,x) $ at position $ x $ and time $ t $ is
		\begin{equation}\label{theta}
			\begin{split}
				\rho(t,x)\theta(t,x) = & \frac{2}{3} \left( \frac{1}{2} \int_{\mathbb{R}^{3}} f(t,x,v) \left[ v-u(t,x) \right]^{2} \,\rd v \right) \\
				=& \frac{1}{3} \int_{\mathbb{R}^{3}} g(t,w) |w|^{2} \,\rd w\\
				=: &\rho(t) \theta(t)\,.
			\end{split}
		\end{equation}
  Note that the involvement of the equation of state is not an implicit assumption that makes our derivation only work for perfect gas; instead, it merely illustrates the relation between the internal energy $e$ and temperature $\theta$ to close the system, allowing us to write the macroscopic equation in the following Section \ref{subsec:hydro_boltzmann} as the time evolution for temperature for better comparison with previous results in \cite{PSJ_2022, PSJ_2023}.
	\end{itemize}
	
	\begin{itemize}
		\item The stress tensor $ S_{ij}(t,x) $. Denote $ c(t,x):=v-u(t,x) $ as the peculiar velocity, i.e., the deviation of the microscopic velocity of a molecule from the bulk velocity, then the stress tensor can be written as:
		\begin{equation}\label{Pij}
			\begin{split}
				S_{ij}(t,x) = &\int_{\mathbb{R}^{3}} c_{i}(t,x) c_{j}(t,x) f(t,x,v) \,\rd v \\
				= & \int_{\mathbb{R}^{3}} w_{i} w_{j} g(t,w) \,\rd w \\
				=: & S_{ij}(t)
			\end{split}
		\end{equation}
    for $i,j = 1,2,3$.

		\item The heat flux $ q_{i}(t,x) $:
		\begin{equation*}
			\begin{split}
				q_{i}(t,x) = &\int_{\mathbb{R}^{3}}  c_{j}(t,x) |c(t,x)|^{2} f(t,x,v) \,\rd v \\
				= & \int_{\mathbb{R}^{3}}  w_{j} |w|^{2} g(t,w) \,\rd w \\
				=: & q_{i}(t) 
			\end{split}
		\end{equation*}
    for $i= 1,2,3$.
	\end{itemize}
	

Equipped with the aforementioned notations, we can derive the conservation forms for homo-energetic flow by multiplying the collision invariants $1$, $w_j$, and $\frac{1}{2}|w|^2$ to both sides of Eq.~\eqref{g}.
	\begin{equation}\label{Macro}
		\left\{
		\begin{aligned}
			\frac{\rd }{\rd t}\rho(t) + \text{Tr}[L(t)] \rho(t)  &= 0\,, \\
			\rho(t) \left( \frac{\rd L(t)}{\rd t} + L^{2}(t) \right) &= 0\,,\\
			\rho(t)\frac{\rd e(t)}{\rd t} + \sum_{i=1}^{3}\sum_{j=1}^{3} S_{ij}(t) L_{ij}(t) &= \rho(t)\frac{\rd e(t)}{\rd t} + S(t):L(t) = 0\,,
		\end{aligned}
		\right.
	\end{equation}
where we use the standard tensor notation $S:L = \text{Tr}(S^TL) = \text{Tr}(SL^T)$. 

Eqs.~\eqref{Macro} is universally satisfied by the macroscopic quantities, irrespective of whether the scaling is mean-field or Boltzmann-Grad. The first equation represents the evolution of the density over time, while the second equation holds true for any $L(t) = A(I+tA)^{-1}$. The third equation describes the evolution of the internal energy or temperature.

	
\subsection{Hydrodynamic limit of the Boltzmann-type model}
\label{subsec:hydro_boltzmann}

Although Eqs.~\eqref{Macro} holds universally, it is not a closed system due to the absence of an explicit relationship between $e$ and $P$. In this subsection, we will address this issue by studying the hydrodynamic limit of the homo-energetic Boltzmann equation Eq.~\eqref{g}. Through asymptotic analysis, we can derive a constitutive relation that allows us to close the system.
	
Consider the homo-energetic Boltzmann equation in a dimensionless manner:
	\begin{equation}\label{gdimensionless}
		\text{St}\partial_{t} g(t,w) - \left[ L(t)w \right]\cdot \nabla_{w} g(t,w) = \frac{1}{\text{Kn}}\mathcal{Q}(g,g)(t,w),
	\end{equation}
 where $\text{Kn}$ is the Knudsen number, defined as the ratio of the mean free path to the macroscopic length scale, and $\text{St}$ is the Strouhal number, defined as the ratio between macroscopic velocity and thermal speed.
Throughout this section, we assume that $\mathcal{Q}(g,g)$ is equipped with the hard potential collision kernel $B$ under the cutoff assumption, and
	\begin{equation*}
		\text{Kn} =  \epsilon \ll 1 \quad \text{and} \quad \text{St} = 1\,,
	\end{equation*}
Then, Eq.~\eqref{gdimensionless} becomes
	\begin{equation}\label{gkn}
		\partial_{t} g(t,w) - \left[ L(t)w \right]\cdot \nabla_{w} g(t,w) = \frac{1}{\epsilon}\mathcal{Q}(g,g)(t,w).
	\end{equation}
	
\subsubsection{The Compressible Euler limit}
\label{subsec:EL}
	
We first derive the compressible Euler limit through the Hilbert expansion. Specifically, we seek the solution of Eq.~\eqref{gkn} in the form of a formal power series in $\epsilon$:
	\begin{equation}\label{hilbert}
		g_{\epsilon}(t,w) = \sum_{n\geq 0} \epsilon^{n} g_{n}(t,w) = g_{0}(t,w) + \epsilon g_{1}(t,w) + \cdots\,.
	\end{equation}
Then at $ O(\epsilon^{-1}) $, we have 
	\begin{equation*}
		\mathcal{Q}(g_{0},g_{0})(t,w ) = 0\,,
	\end{equation*}
which, by also considering the homogeneity of $\rho$, $\theta$ in $x$ from previous discussion, implies that $g_0(t,w)$ is in the form of Maxwellian distribution, i.e., 
	\begin{equation}\label{maxwellian}
		g_{0}(t,w) = \mathcal{M}_{[\rho(t), \theta(t)]} := \frac{\rho(t)}{[2\pi \theta(t)]^{\frac{3}{2}}} \e^{-\frac{|w|^{2}}{2\theta(t)}}, \quad \rho(t) > 0, ~~ \theta(t) > 0\,.
	\end{equation}
	
At $ O(\epsilon^{0}) $, we have the following equation
	\begin{equation} \label{0707}
		\Big( \partial_{t} - [L(t)w]\cdot \nabla_{w} \Big) g_{0}(t,w) = \mathcal{Q}(g_{0},g_{1})(t,w) + \mathcal{Q}(g_{1},g_{0})(t,w)\,.
	\end{equation}
 Define the linearized Boltzmann collision operator 
   \begin{equation}	 \label{0611}	\mathcal{L}_{\mathcal{M}_{[\rho,\theta]}} g := -2\mathcal{M}^{-1}_{_{[\rho,\theta]}}\mathcal{Q}(\mathcal{M}^{-1}_{_{[\rho,\theta]}},\mathcal{M}^{-1}_{_{[\rho,\theta]}}g)\,\,.
	\end{equation}
According to \cite[Theorem 3.11]{Golse2005}, it is stated that $\mathcal{L}_{{\mathcal{M}{[\rho,\theta]}}}$ is an unbounded self-adjoint nonnegative Fredholm operator. Furthermore, its null space is spanned by the collision invariants ${1, w_i, |w|^2}$, where $i = 1,2,3$. Moreover, by setting $W = \frac{w}{\sqrt{\theta(t)}}$ and referring to equation \cite[(3.64)]{Golse2005}, we can conclude that $A(W) \in \left(\text{Ker}\ \mathcal{L}_{g_0}\right)^{\perp}$, where
\begin{equation}\label{1V}
A(W) := W \otimes W - \frac{1}{3}|W|^{2}I = \frac{1}{\theta(t)} w\otimes w - \frac{1}{3}\frac{|w|^{2}}{\theta(t)}I\,.
\end{equation}

Then, Eq.~\eqref{0707} can be rewritten as 
	\begin{equation*}
		\mathcal{L}_{g_{0}} \left(\frac{g_{1}}{g_{0}}\right) = - \Big( \partial_{t} - [L(t)w]\cdot \nabla_{w} \Big) \ln g_{0}(t,w)\,.
	\end{equation*}
Upon a direct calculation, the right-hand side of the above equation can be expressed as follows:
	\begin{equation}\label{partialln}
		\begin{split}
			\Big( \partial_{t} - [L(t)w]\cdot \nabla_{w} \Big) \ln g_{0}(t,w) = &\frac{1}{\rho(t)} \Big( \partial_{t} - [L(t)w]\cdot \nabla_{w} \Big) \rho(t)  - \frac{3}{2\theta(t)} \Big( \partial_{t} - [L(t)w]\cdot \nabla_{w} \Big) \theta(t)\\
			& + \Big( \partial_{t} - [L(t)w]\cdot \nabla_{w} \Big) \left(-\frac{|w|^{2}}{2\theta(t)}\right)\\
			=& \frac{1}{\rho(t)} \partial_{t} \rho(t) - \frac{3}{2\theta(t)} \partial_{t} \theta(t) + [L(t)w]\cdot\frac{w}{\theta(t)} + \frac{|w|^{2}}{2\theta^{2}}\partial_{t} \theta(t).
		\end{split}
	\end{equation}
We can rearrange the right-hand side of Eq.~\eqref{partialln} and express it as a linear combination of ${1, w_i, |w|^2}$, where $i = 1,2,3$, and $A(W)$ in the following form:
	\begin{equation}\label{Loperator}
		\begin{split}
			-\mathcal{L}_{g_{0}}\left(\frac{g_{0}}{g_{1}}\right) = &\Big( \partial_{t} - [L(t)w]\cdot \nabla_{w} \Big) \ln g_{0}(t,w) \\
			= &\frac{1}{\rho(t)} \Big( \partial_{t}\rho(t) + \text{Tr}[L(t)] \rho(t)  \Big) + \frac{1}{2} \left( \frac{|w|^{2}}{\theta(t)}-3 \right)\frac{1}{\theta(t)} \Big( \partial_{t}\theta(t) + \frac{2}{3}\text{Tr}[L(t)] \theta(t) \Big)\\
			& +A(W):D \,,
		\end{split}
	\end{equation}
 where $ D $ is denoted as
	\begin{equation}
		D:= \frac{1}{2}\Big( L(t) + [L(t)]^{\top} - \frac{2}{3} \text{Tr}[L(t)] I \Big).
	\end{equation}
Clearly,  the last term on the right-hand side of Eq.~\eqref{Loperator} belongs to $ \left(\text{Ker}\mathcal{L}_{g_{0}}\right)^{\perp} $, while the first two terms are in $ \text{Ker}\mathcal{L}_{g_{0}} $.

Therefore, the solvability condition, as stated in \cite[(3.63)]{Golse2005}  for the Fredholm integral problem \eqref{Loperator} requires that the right-hand side of \eqref{Loperator} is perpendicular to $\text{Ker}\ \mathcal{L}_{g_0}$. This condition further implies that the coefficients of the functions $1$ and $\frac{1}{2}\left(|W|^{2} - 3\right)$ must vanish, i.e., 
	\begin{equation*}
		\left\{
		\begin{aligned}
			\partial_{t}\rho(t) + \text{Tr}[L(t)] \rho(t)  &= 0, \\
			\partial_{t}\theta(t) + \frac{2}{3}\text{Tr}[L(t)] \theta(t) & = 0\,.
		\end{aligned}
		\right.
	\end{equation*}
 Here the first equation reduces to the same equation in Eqs.~\eqref{Macro}, while the second one corresponds to the third equation in Eqs.~\eqref{Macro} with the relationship between the pressure law, internal energy and temperature given by:
    \begin{equation}\label{ideal}
    S = \rho(t)\theta(t)I \quad \text{and} \quad e(t) = \frac{3}{2}\theta(t)\,.
    \end{equation}
This system is recognized as the compressible Euler system Eqs.~\eqref{euler} in the case of the perfect monatomic gas.

\begin{remark}
We pointed out that the equilibrium \eqref{maxwellian} shares some similarities with the long-time behavior of \eqref{g}. As proposed in \cite[Section 6.1]{JNV2019}, in the collision-dominated scenario, $g(t,w)$ is expected to approach $\frac{1}{(2\pi)^{3/2}}\beta(t)^{3/2} e^{-\beta(t)|w|^2}$—a form also exhibiting Gaussian characteristics.  However, one should not confuse this conjecture with the equilibrium \eqref{maxwellian}  we use here to derive the hydrodynamic limit as they essentially represent different asymptotics.
\end{remark}


 
\subsubsection{The Compressible Navier-Stokes Limit} 
\label{subsec:NS}

We further derive the compressible Navier-Stokes equation by investigating the next order term in the asymptotic expansion. Here we follow \cite{Golse2005} and use a slightly different expansion for the solution:
	\begin{equation}\label{Chapman}
		g_{\epsilon}(t,w) = \sum_{n\geq 0} \epsilon^{n} g_{n}[\vec{P}(t)](w) = g_{0}[\vec{P}(t)](w) + \epsilon g_{1}[\vec{P}(t)](w) + \cdots\,.
	\end{equation}
 Compared to the Hilbert expansion \eqref{hilbert}, we require that $g_0$ has the same first five moments as $g_\epsilon$ by construction. That is,
	\begin{equation*}
		\int_{\mathbb{R}^{3}} g_{0} [\vec{P}(t)](w) \left(                
		\begin{array}{c}   
			1 \\  
			\frac{|w|^{2}}{2}   
		\end{array}
		\right) \,\rd w = \vec{P}(t)\,,
	\end{equation*}
 where $\vec{P}$ is a vector of conserved quantities. As a result, 
	\begin{equation}\label{gnnP}
		\int_{\mathbb{R}^{3}} g_{n} [\vec{P}(t)](w) \left(                
		\begin{array}{c}   
			1 \\  
			\frac{|w|^{2}}{2}   
		\end{array}
		\right) \,\rd w = \vec{0}\,, \quad \text{for all}\quad  n \geq 1\,.
	\end{equation}
 This expansion is termed as Chapman-Enskog expansion. 

 By taking the moments of Eq.~\eqref{gkn}, the conserved quantities satisfy a system of conservation laws:
	\begin{equation}\label{Phin}
		\partial_{t}\vec{P}(t) = \sum_{n\geq 0} \epsilon^{n}\Phi_{n}[\vec{P}](t) = \Phi_{0}(t) + \epsilon \Phi_{1}[\vec{P}](t) + \cdots\,,
	\end{equation}
where the flux term $ \Phi_{n}[\vec{P}](t) $ is denoted from the conservation law associated with Eq.~\eqref{gkn}
	\begin{equation*}
		\Phi_{n}[\vec{P}](t) = \int_{\mathbb{R}^{3}} \left(               
		\begin{array}{c}   
			1 \\  
			\frac{|w|^{2}}{2}   
		\end{array}
		\right) [L(t)w]\cdot\nabla_{w}g_{n}[\vec{P}(t)](w) \,\rd w,
	\end{equation*}
for $ n\geq 0 $.
	
As with the derivation in the previous section, at the leading order $ O(\epsilon^{0}) $, we obtain that 
	\begin{equation*}
		0 = \mathcal{Q}\left(g_{0}[\vec{P}(t)], g_{0}[\vec{P}(t)]\right)(w),
	\end{equation*}
which implies that $ g_{0}[\vec{P}(t)](w) $ is in the form of Maxwellian distribution as in Eq.~\eqref{maxwellian}. 

 At the next order $ O(\epsilon^{1}) $, we have that 
	\begin{equation}\label{g1}
		\Big( \partial_{t} - [L(t)w]\cdot \nabla_{w} \Big) g_{0}[\vec{P}(t)](w) = \mathcal{Q}\left(g_{0}[\vec{P}(t)],g_{1}[\vec{P}(t)]\right)(w) + \mathcal{Q}\left(g_{1}[\vec{P}(t)],g_{0}[\vec{P}(t)]\right)(w).
	\end{equation}
Using the form of $ g_{0}[\vec{P}(t)](w) $ in \eqref{maxwellian} and the fact that $\vec P$ solves \eqref{Phin}, the left hand side of \eqref{g1} becomes
	\begin{equation}\label{Pg0}
		\begin{split}
			\Big( \partial_{t} - [L(t)w]\cdot \nabla_{w} \Big) g_{0}[\vec{P}(t)](w)
			=\ & g_{0}[\vec{P}(t)](w) \left[ A(W):D \right] + O(\epsilon),
		\end{split}
	\end{equation}
 where the $O(\epsilon)$ term comes from the high order terms in \eqref{Phin}. 
 
 Substituting \eqref{Pg0} into \eqref{g1} and omitting the higher order term, and using the definition of linearization operator Eq.~\eqref{0611},
$ g_{1}[\vec{P}(t)](w) $ is determined by
	\begin{equation}\label{Lg1}
		\left\{
		\begin{aligned}
			\mathcal{L}_{g_{0}[\vec{P}(t)]}\left(\frac{g_{0}[\vec{P}(t)]}{g_{1}[\vec{P}(t)]}\right)  = -\left[ A(W):D \right],\\
			\int_{\mathbb{R}^{3}} g_{1} [\vec{P}(t)](w) \left(            
			\begin{array}{c}   
				1 \\  
				\frac{|w|^{2}}{2}   
			\end{array}
			\right) \,\rd w = \vec{0}.
		\end{aligned}
		\right.
	\end{equation}
and therefore $g_{1}[\vec{P}(t)]$ can be solved:
	\begin{equation*}
		\begin{split}
			 g_{1}[\vec{P}(t)] = -g_{0}[\vec{P}(t)](w) \left[ a(\theta,|W|) A(W):D \right],
		\end{split}
	\end{equation*}
	where the scalar quantity $ a(\theta,|W|) $ is denoted as $\mathcal{L}_{g_{0}[\vec{P}(t)]}(a(\theta,|W|)A(W)) = A(W)$.

	Hence, the first-order correction to the fluxes in the formal conservation law is 
	\begin{equation}\label{phi1}
		\begin{split}
			\Phi_{1}[\vec{P}(t)](w) =& \int_{\mathbb{R}^{3}} [L(t)w]\cdot \nabla_{w}g_{1} [\vec{P}(t)](w) \left(                 
			\begin{array}{c}   
				1 \\ [4pt]
				\frac{|w|^{2}}{2}   
			\end{array} 
			\right) \,\rd w\\
			 =& \left(                 
			\begin{array}{c}   
				0 \\  [4pt]
				\mu(\theta) \frac{1}{2}\Big( \text{Tr}[L^{2}(t)] + L(t):L(t) - \frac{2}{3} (\text{Tr}[L(t)])^2 \Big)
			\end{array}
			\right),
		\end{split}
	\end{equation}
	where the viscosity $ \mu(\theta) $ can be computed as in \cite[5.15]{Golse2005}:
 \begin{equation}\label{mu1}
    \mu(\theta) = \frac{2}{15}\theta \int_{0}^{\infty} a(\theta,r) r^6 \frac{1}{\sqrt{2\pi}}\e^{-r^2/2} \,\rd r\,.
 \end{equation}
	Recall Eq.~\eqref{Phin} and keeps only the first two order terms, we have 
	\begin{equation}\label{phi2}
		\partial_{t}\vec{P}(t)  =  \Phi_{0}[\vec{P}](t) + \epsilon \Phi_{1}[\vec{P}](t) \ \text{mod}\ O(\epsilon^{2})\,.
	\end{equation}
   Spelling out the flux terms, we have 
	\begin{equation} \label{NS2}
		\left\{
		\begin{aligned}
			\partial_{t}\rho(t) + \text{Tr}[L(t)] \rho(t)  &= 0, \\[4pt]
			\partial_{t}\theta(t) + \frac{2}{3}\text{Tr}[L(t)] \theta(t) & = \epsilon \mu(\theta)\frac{1}{2}\Big( \text{Tr}[L^{2}(t)] + L(t):L(t) - \frac{2}{3} (\text{Tr}[L(t)])^2 \Big),
		\end{aligned}
		\right.
	\end{equation}
    which recovers the compressible Navier-Stokes system Eqs.~\eqref{NS}. 
	This also corresponds to the Eq.~(7)-(10) in \cite{PSJ_2022} and Eq.~(29)-(30) in \cite{PSJ_2023}.


	\section*{Acknowledgement}
	\label{sec:ack}
	R.~D.~James work was supported by a Vannevar Bush Faculty Fellowship (N00014-19-1-2623)  and AFOSR (FA9550-23-1-0093). L.~Wang is partially supported by NSF grant DMS-1846854. K.~Qi is supported by grants from the School of Mathematics at the University of Minnesota. 


\appendix

\section{Preliminary results for the well-posedness of the mean-field equation}
\label{app:hypo}

\subsection{Estimates of the new characteristics}

Note that Eqs.~\eqref{XWG} can be written as the characteristic equation of the new variable $V:= (X, W)$,
\begin{equation*}
    \frac{\rd }{\rd t}V = \Psi_{\xi,\mathcal{H}[g]}(t,V),
\end{equation*}
where $\Psi_{\xi,\mathcal{H}[g]}(t,V): [0,T] \times \RR^3 \times \RR^3 \rightarrow \RR^3 \times \RR^3$ is the right-hand side of Eqs.~\eqref{XWG}. Then mean-field equation Eq.~\eqref{P1UR} becomes
\begin{equation*}
    \frac{\partial g(t,x,w)}{\partial t} + \textbf{div}(\Psi_{\xi,\mathcal{H}[g]}g)(t,x,w) = 0.
\end{equation*}

To prove the well-posedness of Eq.~\eqref{P1UR}, we first study the induced characteristic trajectories.

\begin{lemma}\label{XtWt}
    For the field $\xi(t,x,w)$ satisfying the Hypothesis~\ref{Hypo_xi} and the operator $\mathcal{H}(t,x,w) $ satisfying the Hypothesis~\ref{Hypo_H}. Given $(X_0, W_0) \in \RR^3 \times \RR^3$, there exists a unique solution $(X,W)$ to Eqs.~\eqref{XWG} in $C^1([0,T],\ \RR^3 \times \RR^3)$ with $X(0)=X_0$ and $W(0)=W_0$. In addition, there exists a constant $C_{0,T}$ depending only on $|X_0|, |W_0|, T$ such that
    \begin{equation*}
        |\big(X(t), W(t)\big)| \leq |\big(X_0, W_0\big)| \e^{tC_{0,T}}, \quad \forall t \in [0,T]. 
    \end{equation*}
\end{lemma}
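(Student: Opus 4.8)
The plan is to view Eqs.~\eqref{XWG} as a single ODE on $\RR^6$ and combine the Cauchy--Lipschitz theorem with an a priori bound obtained from Gr\"onwall's inequality. Setting $P=(X,W)$ and $\Psi(t,P):=\big(\xi(t,X,W),\ \mathcal{H}[g](t,X,W)\big)$, the system becomes $\tfrac{\rd}{\rd t}P=\Psi(t,P)$ with $P(0)=(X_0,W_0)$; here $g$ is frozen, so $\mathcal{H}[g]$ is a prescribed time-dependent field. First I would verify that $\Psi$ is continuous in $t$ (Hypotheses~\ref{Hypo_xi}(i) and~\ref{Hypo_H}(i)) and locally Lipschitz in $P$ on every compact set: for the $X$--component this is exactly Hypothesis~\ref{Hypo_xi}(iii), while for the $W$--component it follows from the explicit structure $\mathcal{H}[g]=E[g]+\eta$, the force $E[g](t,\cdot)$ being locally Lipschitz in $x$ (through the regularity of $U$, i.e.\ Hypothesis~\ref{Hypo_E}) and $\eta(t,w)=-A(I+tA)^{-1}w$ being linear, hence globally Lipschitz, in $w$. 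The classical Cauchy--Lipschitz (Picard--Lindel\"of) theorem then furnishes a unique maximal $C^1$ solution on a subinterval $[0,T^\ast)$, and uniqueness on $[0,T]$ will follow once $T^\ast>T$ is established.

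The key quantitative step is a global-in-$(x,w)$ linear growth estimate for $\Psi$. Using Hypothesis~\ref{Hypo_xi}(ii) for $\xi$ together with the decomposition $\mathcal{H}[g]=E[g]+\eta$ --- where $\sup_{t\in[0,T]}\|E[g](t,\cdot)\|_{L^\infty(\RR^3)}$ is finite for compactly supported $g$ and $|\eta(t,w)|\le\big(\sup_{t\in[0,T]}\|A(I+tA)^{-1}\|\big)|w|$ --- one obtains constants $a,b>0$ depending only on $T$, $C_\xi$ and $U$ such that $|\Psi(t,P)|\le a+b|P|$ for all $(t,P)\in[0,T]\times\RR^6$. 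Along the trajectory this yields $\big|\tfrac{\rd}{\rd t}|P(t)|\big|\le|\Psi(t,P(t))|\le a+b|P(t)|$ for a.e.\ $t$, and Gr\"onwall's inequality gives $|P(t)|\le|P_0|\,\e^{bt}+\tfrac{a}{b}(\e^{bt}-1)$. Since $\e^{bt}-1\le bt\,\e^{bt}$ and $1+s\le\e^{s}$, this can be rewritten as $|P(t)|\le(|P_0|+at)\,\e^{bt}\le|P_0|\,\e^{(b+a/|P_0|)t}$, so the claimed bound holds with $C_{0,T}:=b+a/|(X_0,W_0)|$ (the degenerate case $(X_0,W_0)=0$ being covered by the intermediate additive estimate).

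Finally, I would close the argument with the standard continuation principle: were $T^\ast\le T$ finite, the a priori bound would keep $P(t)$ inside a fixed compact set as $t\uparrow T^\ast$, allowing the solution to be extended past $T^\ast$ and contradicting maximality. Hence $T^\ast>T$, the solution exists, is unique and $C^1$ on all of $[0,T]$, and satisfies the stated exponential bound. I expect the only genuine obstacle to be the second paragraph: Hypothesis~\ref{Hypo_H}(ii) controls $\mathcal{H}[g]$ merely on the support ball of $g$, so the honest derivation of a linear growth bound valid for \emph{all} $(x,w)$ must route through the splitting $\mathcal{H}[g]=E[g]+\eta$ and the global boundedness of the force $E[g]$; everything else is routine ODE theory.
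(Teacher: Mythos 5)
Your proposal is correct and follows essentially the same route as the paper's own proof, which is a two-sentence appeal to standard ODE theory (local Lipschitz continuity from Hypotheses~\ref{Hypo_xi} and~\ref{Hypo_H} giving existence and uniqueness) together with the linear-growth estimates \eqref{xi_growth} and \eqref{H_growth} and Gronwall's inequality for the exponential bound. Your additional observation --- that \eqref{H_growth} only controls $\mathcal{H}[g]$ on the support ball $B_R$, so an honest globally valid linear-growth bound must pass through the splitting $\mathcal{H}[g]=E[g]+\eta$ and the growth of $\nabla U$ --- together with your remark that the multiplicative bound must be read via the additive Gronwall estimate when $(X_0,W_0)=0$, fills in precisely the details that the paper's one-line citation glosses over.
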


\begin{proof}
    Considering the field $\xi(t,x,w)$ satisfying the Hypothesis~\ref{Hypo_xi} and the operator $\mathcal{H}(t,x,w)$ satisfying the Hypothesis~\ref{Hypo_H}, where the Lipschtz continuity holds for the dynamic equation of $X(t)$ and $W(t)$, the system Eqs.~\eqref{XWG} admits a unique solution on $[0, T)$ for each initial condition $(X(0), W(0)) \in \RR^3 \times \RR^3$ by applying the standard argument of ordinary differential equations. On the other hand, the bound can be obtained from the at-most linear growth estimate \eqref{xi_growth} and \eqref{H_growth}.
\end{proof}


\begin{lemma}[Regularity of the characteristic equation]\label{RCE}
    For any $T > 0$, assume that the field $\xi(t,x,w)$ satisfies the Hypothesis~\ref{Hypo_xi} and the operator $\mathcal{H}(t,x,w)$ satisfies the Hypothesis~\ref{Hypo_H}. Then, for any closed ball $B_R \subset \RR^3 \times \RR^3$ with $R > 0$, \\
    (i) $\Psi_{\xi,\mathcal{H}}(V)$ is bounded in the compact sets: for $V=(X,W) \in B_R$ and $t \in [0,T]$,
    \begin{equation*}
        |\Psi_{\xi,\mathcal{H}}(V)| \leq C_V, \quad \forall \, V \in B_R,
    \end{equation*}
    where the constant $C_V>0$ depends on $R$, $T$, $C_{\xi}$ and $C_{\mathcal{H}}$.\\
    (ii) $\Psi_{\xi,\mathcal{H}}(V)$ is locally Lipschitz with respect to $x,w$: for all $V_1=(X_1,W_1), V_2=(X_2,W_2)$ in $B_R$ and $ t \in [0,T]$,
    \begin{equation*}
        |\Psi_{\xi,\mathcal{H}}(V_1) - \Psi_{\xi,\mathcal{H}}(V_2)| \leq L_V |V_1 - V_2|, \quad \forall \, V_1, V_2 \in B_R,
    \end{equation*}
    where the constant $L_V > 0$ depends on $R$, $T$, $L_{\xi}$ and $L_{\mathcal{H}}$.
\end{lemma}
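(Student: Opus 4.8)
The plan is to exploit the product structure of the right-hand side of Eqs.~\eqref{XWG}: by definition $\Psi_{\xi,\mathcal{H}}(t,P) = \big(\xi(t,X,W),\, \mathcal{H}[g](t,X,W)\big)$ for $P=(X,W)$, so that $|\Psi_{\xi,\mathcal{H}}(t,P)|^2 = |\xi(t,X,W)|^2 + |\mathcal{H}[g](t,X,W)|^2$. Hence both assertions follow once the corresponding bound is established separately for each of the two components, the first being governed by Hypothesis~\ref{Hypo_xi} and the second by Hypothesis~\ref{Hypo_H} (together with the regularity of the force encoded in Hypothesis~\ref{Hypo_E}). Throughout, $g$ is a fixed element of $\mathcal{G}$ whose time slices $g(t,\cdot,\cdot)$ are compactly supported, so that Hypothesis~\ref{Hypo_H}(ii)--(iii) apply.

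For part (i), fix $P=(X,W)\in B_R$, so that $|X|\le R$ and $|W|\le R$. The growth bound \eqref{xi_growth} gives $|\xi(t,X,W)|\le C_{\xi}(1+|X|+|W|)\le C_{\xi}(1+2R)$, while the $L^\infty$ bound \eqref{H_growth} gives $|\mathcal{H}[g](t,X,W)|\le C_{\mathcal{H}}$ uniformly in $t\in[0,T]$. Combining through the Euclidean norm yields $|\Psi_{\xi,\mathcal{H}}(t,P)|\le C_{\xi}(1+2R)+C_{\mathcal{H}} =: C_P$, a constant depending only on $R$, $T$, $C_{\xi}$ and $C_{\mathcal{H}}$, which is exactly (i).

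For part (ii), the first component is immediate from the local Lipschitz property in Hypothesis~\ref{Hypo_xi}(iii): $|\xi(t,P_1)-\xi(t,P_2)|\le L_{\xi}|P_1-P_2|$ with $L_{\xi}=L_{\xi}(B_R)$. The second component is the crux. I would decompose $\mathcal{H}[g](t,x,w)=E[g](t,x)+\eta(t,w)$ as in \eqref{XWG} and treat the two pieces separately. The inertial piece $\eta(t,w)=-A(I+tA)^{-1}w$ is linear in $w$, so $|\eta(t,w_1)-\eta(t,w_2)|\le \big(\sup_{t\in[0,T]}\|A(I+tA)^{-1}\|\big)|w_1-w_2|$, the supremum being finite on $[0,T]$ (and equal to $|K|$ in the simple-shear case \eqref{simple_shear}). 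The force piece $E[g](t,x)=-\nabla U*\rho_g(t,x)$ must be shown to be Lipschitz in $x$; since $\rho_g(t,\cdot)$ is a probability measure of unit mass, $\nabla_x E[g](t,x)=-\nabla^2U*\rho_g(t,x)$ is controlled by $\|\nabla^2 U\|_{L^\infty}$, giving $|E[g](t,x_1)-E[g](t,x_2)|\le \|\nabla^2U\|_{L^\infty}|x_1-x_2|$. Collecting the two estimates produces a constant $L_{\mathcal{H}}$, depending on $U$ through $\nabla^2 U$ and on $\sup_{t\in[0,T]}\|A(I+tA)^{-1}\|$, such that $|\mathcal{H}[g](t,P_1)-\mathcal{H}[g](t,P_2)|\le L_{\mathcal{H}}|P_1-P_2|$; adding it to the $\xi$-bound gives (ii) with $L_P=L_{\xi}+L_{\mathcal{H}}$.

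The one step requiring genuine care---and the main obstacle---is the spatial Lipschitz continuity of the nonlocal force $E[g]$, because it is not among the items literally listed in Hypothesis~\ref{Hypo_H}, which control continuity, growth, and the Lipschitz dependence on the \emph{measure} $g$ rather than on the phase-space point $(x,w)$. It must instead be drawn from the regularity assumptions on the potential in Hypothesis~\ref{Hypo_E}, specifically a bounded second derivative $\nabla^2 U$, which upgrades the convolution $\nabla U*\rho_g$ to a Lipschitz function of $x$ uniformly in the probability measure $\rho_g$. Everything else---the boundedness of $A(I+tA)^{-1}$ on the finite interval $[0,T]$ and the componentwise assembly of the estimates---is routine.
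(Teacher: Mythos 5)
Your proposal is correct, but there is an important contextual point: the paper states Lemma~\ref{RCE} \emph{without any proof} (it is imported in spirit from \cite{CCR2011}), so there is no argument of the authors to compare against line by line; what you have written is essentially the proof the authors leave implicit. Part (i) is exactly the routine estimate one expects: the growth bound \eqref{xi_growth} for the $\xi$-component and the uniform bound \eqref{H_growth} for the $\mathcal{H}$-component (note only that the lemma's ball $B_R$ must contain ${\rm supp}\, g_t$, or one enlarges $R$, so that \eqref{H_growth} applies). For part (ii), the $\xi$-component is indeed immediate from Hypothesis~\ref{Hypo_xi}(iii). The issue you single out as the crux is a genuine ambiguity in the paper: in \eqref{H-hypo31}, and again in the stability part of the proof of Theorem~\ref{well-posed_P1UR}, the quantity ${\rm Lip}_R[\mathcal{H}]$ multiplies $W_1(g,h)$, i.e.\ it measures Lipschitz dependence on the \emph{measure}; yet the lemma asserts that $L_P$ depends on $L_{\mathcal{H}}$, which only makes sense if the final display of Hypothesis~\ref{Hypo_H}(iii), $\max_{t\in[0,T]}{\rm Lip}_R[\mathcal{H}(t,\cdot,\cdot)]\le L_{\mathcal{H}}$, is read as a bound on the spatial Lipschitz constant of $(x,w)\mapsto\mathcal{H}[g](t,x,w)$ on $B_R$. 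Under that reading, part (ii) follows directly from the hypotheses with $L_P=L_\xi+L_{\mathcal{H}}$; under your (also defensible) reading, the abstract statement is not provable from Hypothesis~\ref{Hypo_H} alone, and your fallback---deriving the spatial Lipschitz property from the concrete structure $\mathcal{H}[g]=E[g]+\eta$---is precisely what the remark following Hypothesis~\ref{Hypo_H}, Hypothesis~\ref{Hypo_E}, and the appendix lemmas envisage. Either way your conclusion stands.

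Two refinements would tighten the argument. First, you do not need $\nabla^2U\in L^\infty(\RR^3)$: since $\rho_g(t,\cdot)$ is a probability measure supported in a fixed compact set and $E[g]$ is only evaluated on $B_R$, local Lipschitz continuity of $\nabla U$ suffices, giving $|E[g](t,x_1)-E[g](t,x_2)|\le {\rm Lip}_{2R'}(\nabla U)\,|x_1-x_2|$ for a suitable $R'$; this is exactly the estimate ${\rm Lip}_R(E[g])\le{\rm Lip}_{2R}(\nabla U)$ quoted in the appendix from \cite[Lemma 3.14]{CCR2011}, and it is the paper's actual standing assumption on $U$. Second, the finiteness of $\sup_{t\in[0,T]}\|A(I+tA)^{-1}\|$ used for the $\eta$-piece requires $\det(I+tA)\neq 0$ on all of $[0,T]$; the paper assumes this implicitly (and it is automatic for the simple shear \eqref{simple_shear}, where the matrix is constant in time), but since the lemma claims a constant valid on the whole interval $[0,T]$, you should state this non-degeneracy explicitly.
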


\begin{lemma}[Dependence of characteristic equation on $\xi$ and $\mathcal{H}$]\label{DCE}
    Assume that there are two fields $\xi_1, \xi_2$ satisfying the Hypothesis~\ref{Hypo_xi} and two operators $\mathcal{H}_1, \mathcal{H}_2$ satisfying the Hypothesis~\ref{Hypo_H}.\\
    For any point $V^0 \in \RR^3 \times \RR^3$ and $R > 0$, we assume that,
    \begin{equation*}
        |\mathcal{T}^t_{\xi_1, \mathcal{H}_1}(V^0)| \leq R, \quad |\mathcal{T}^t_{\xi_2, \mathcal{H}_2}(V^0)| \leq R, \quad \forall t \in [0,T].
    \end{equation*}
    Then, for $t \in [0,T]$, it holds that 
    \begin{equation*}
        \left| \mathcal{T}_{\xi_1, \mathcal{H}_1}(V^0) - \mathcal{T}_{\xi_2, \mathcal{H}_2}(V^0) \right| \leq \frac{\e^{tL_V}-1}{L_V} \left(  |L_{\xi_1}-L_{\xi_2}|R + \sup_{\tau \in (0,T)} \|\mathcal{H}_1(\tau, \cdot,\cdot)  - \mathcal{H}_2(\tau, \cdot,\cdot) \|_{L^\infty(B_R)} \right)\,,
    \end{equation*}
    where the constant $L_V > 0$ depends on $R$, $T$, $L_{\xi_1}$ and $L_{\mathcal{H}_1}$.
\end{lemma}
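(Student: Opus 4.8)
The plan is to treat this as a standard comparison estimate between the integral curves of two non-autonomous ODE systems sharing the same initial point $P^0$, closed by Gronwall's inequality. Writing $P_k(t) := \mathcal{T}^t_{\xi_k,\mathcal{H}_k}(P^0)$ for $k=1,2$, each trajectory solves $\frac{\rd}{\rd t}P_k = \Psi_{\xi_k,\mathcal{H}_k}(t,P_k)$ with $P_1(0)=P_2(0)=P^0$, where $\Psi_{\xi_k,\mathcal{H}_k}=(\xi_k,\mathcal{H}_k)$ is the right-hand side of Eqs.~\eqref{XWG}. By hypothesis both curves remain in the ball $B_R$ for all $t\in[0,T]$, which is exactly what is needed to invoke the \emph{local} Lipschitz bound of Lemma~\ref{RCE} along the two trajectories.

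First I would subtract the two equations and insert the intermediate term $\Psi_{\xi_1,\mathcal{H}_1}(t,P_2)$, obtaining the splitting
\[
\frac{\rd}{\rd t}\big(P_1 - P_2\big) = \underbrace{\big[\Psi_{\xi_1,\mathcal{H}_1}(t,P_1) - \Psi_{\xi_1,\mathcal{H}_1}(t,P_2)\big]}_{\text{same field, two points}} + \underbrace{\big[\Psi_{\xi_1,\mathcal{H}_1}(t,P_2) - \Psi_{\xi_2,\mathcal{H}_2}(t,P_2)\big]}_{\text{two fields, same point}}.
\]
The first bracket is controlled by Lemma~\ref{RCE}(ii) applied to $\Psi_{\xi_1,\mathcal{H}_1}$, giving $L_P|P_1(t)-P_2(t)|$ with $L_P$ depending on $R,T,L_{\xi_1},L_{\mathcal{H}_1}$ (hence precisely the dependence advertised in the statement, which is why I add and subtract $\Psi_{\xi_1,\mathcal{H}_1}$ rather than $\Psi_{\xi_2,\mathcal{H}_2}$). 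For the second bracket I would estimate its two components separately. The $w$-component is the operator discrepancy $\mathcal{H}_1(t,P_2)-\mathcal{H}_2(t,P_2)$, bounded on $B_R$ by $\sup_{\tau\in(0,T)}\|\mathcal{H}_1(\tau,\cdot,\cdot)-\mathcal{H}_2(\tau,\cdot,\cdot)\|_{L^\infty(B_R)}$. The $x$-component is the field discrepancy $\xi_1(t,P_2)-\xi_2(t,P_2)$; since the relevant fields are affine, $\xi_k(t,x,w)=w+L_{\xi_k}(t)x$, this equals $(L_{\xi_1}-L_{\xi_2})x$ and is bounded on $B_R$ by $|L_{\xi_1}-L_{\xi_2}|R$. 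Denoting by $C$ the sum of these two bounds (a constant in $t$), the second bracket contributes at most $C$.

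Combining the two estimates and using $\frac{\rd}{\rd t}|P_1-P_2|\le|\frac{\rd}{\rd t}(P_1-P_2)|$ (valid a.e., or argued through $\frac{1}{2}\frac{\rd}{\rd t}|P_1-P_2|^2$), I arrive at the scalar differential inequality $\frac{\rd}{\rd t}|P_1-P_2|\le L_P|P_1-P_2|+C$ with $|P_1(0)-P_2(0)|=0$. Multiplying by the integrating factor $\e^{-L_P t}$, integrating over $[0,t]$, and using $\int_0^t \e^{L_P(t-s)}\,\rd s=\frac{\e^{L_P t}-1}{L_P}$ yields exactly the claimed bound. The routine parts are the two applications of the hypotheses; the only point requiring genuine care is ensuring that the local Lipschitz constant $L_P$ — valid a priori only on the compact ball — may legitimately be used along the whole trajectory, and this is guaranteed precisely by the standing assumption that both flows stay inside $B_R$. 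A secondary subtlety is the harmless notational overlap between the matrix $L_{\xi_k}$ generating the affine field and the Lipschitz constant of Hypothesis~\ref{Hypo_xi}, which I would defuse by making the affine structure of $\xi_k$ explicit at the step where the $\xi$-discrepancy is estimated.
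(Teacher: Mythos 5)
Your proposal matches the paper's proof essentially step for step: the same insertion of the intermediate term $\Psi_{\xi_1,\mathcal{H}_1}(\tau,P_2(\tau))$, the same application of Lemma~\ref{RCE}(ii) to the first bracket and of the hypotheses to the field/operator discrepancies, and the same Gronwall closure yielding $\frac{\e^{tL_P}-1}{L_P}$ (the paper uses the integral form of Gronwall where you use the differential form, a cosmetic difference). Your remark on interpreting the $|L_{\xi_1}-L_{\xi_2}|R$ term via the affine structure of $\xi_k$ is a fair clarification of a point the paper leaves implicit, but it does not change the argument.
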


\begin{proof}
    We denote $V_i(t) = \mathcal{T}^t_{\xi_i,\mathcal{H}_i}(V^0) = (X_i(t), W_i(t))$ for $i=1,2$ and $t \in [0,T]$. These functions satisfy the characteristic system Eqs.~\eqref{XWG}: for $i=1,2$, 
    \begin{equation*}
		\left\{
		\begin{aligned}
			\frac{\rd }{\rd t} V_i(t) =&\ \Psi_{\xi_i,\mathcal{H}_i}(t,V_i(t)),\\[6pt]
			V_i(0) =&\ V^0.
		\end{aligned}
		\right.
    \end{equation*}
    Then for $t \in [0,T]$, 
    \begin{equation*}
        \begin{split}
            &|V_1(t) - V_2(t)| \\
            \leq & \int_0^t \left|\Psi_{\xi_1,\mathcal{H}_1}(\tau,V_1(\tau)) - \Psi_{\xi_2,\mathcal{H}_2}(\tau,V_2(\tau))  \right| \,\rd\tau \\
            \leq & \int_0^t \left|\Psi_{\xi_1,\mathcal{H}_1}(\tau,V_1(\tau)) - \Psi_{\xi_1,\mathcal{H}_1}(\tau,V_2(\tau))  \right| \,\rd\tau + \int_0^t \left|\Psi_{\xi_1,\mathcal{H}_1}(\tau,V_2(\tau)) - \Psi_{\xi_2,\mathcal{H}_2}(\tau,V_2(\tau))  \right| \,\rd\tau\\
            \leq & L_V\int_0^t \left|V_1(\tau) - V_2(\tau) \right| \,\rd\tau + \int_0^t |L_{\xi_1}-L_{\xi_2}|R + \|\mathcal{H}_1(\tau, \cdot,\cdot)  - \mathcal{H}_2(\tau, \cdot,\cdot) \|_{L^\infty(B_R)} \,\rd\tau.
        \end{split}
    \end{equation*}
    Finally, by the Gronwall's inequality, we obtain
    \begin{equation*}
        \begin{split}
            |V_1(t) - V_2(t)| \leq \ \frac{\e^{tL_V}-1}{L_V} \left(  |L_{\xi_1}-L_{\xi_2}|R + \sup_{\tau \in (0,T)} \|\mathcal{H}_1(\tau, \cdot,\cdot)  - \mathcal{H}_2(\tau, \cdot,\cdot) \|_{L^\infty(B_R)} \right).
        \end{split}
    \end{equation*}
\end{proof}

\begin{lemma}[Regularity of characteristics with respect to time]\label{CR_time}
    For any $T > 0$, assume that the field $\xi(t,x,w)$ satisfies the Hypothesis~\ref{Hypo_xi} and the operator $\mathcal{H}(t,x,w)$ satisfies the Hypothesis~\ref{Hypo_H}. For any initial condition $V^0 = \RR^3 \times \RR^3$ and $R > 0$ such that
    \begin{equation*}
        |\mathcal{T}^t_{\xi,\mathcal{H}}(V^0)| \leq R, \quad  \forall \ t \in [0,T]\,,
    \end{equation*}
    it holds that 
    \begin{equation*}
        |\mathcal{T}^t_{\xi,\mathcal{H}}(V^0) - \mathcal{T}^s_{\xi,\mathcal{H}}(V^0)| \leq C |t-s|, \quad \forall \ s,t \in [0,T],
    \end{equation*}
    where the constant $C > 0$ depends only on $R$, $C_{\xi}$ and $C_{\mathcal{H}}$.
\end{lemma}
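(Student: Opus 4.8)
The plan is to recognize the flow $\mathcal{T}^t_{\xi,\mathcal{H}}(P^0)$ as the solution of the characteristic ODE and then exploit a uniform bound on its velocity field along the trajectory. Write $P(\tau) := \mathcal{T}^\tau_{\xi,\mathcal{H}}(P^0)$ for $\tau \in [0,T]$; by Lemma~\ref{XtWt} this is the unique $C^1$ solution of $\frac{\rd}{\rd \tau} P(\tau) = \Psi_{\xi,\mathcal{H}}(\tau, P(\tau))$ with $P(0) = P^0$, where $\Psi_{\xi,\mathcal{H}}$ is the $\RR^3 \times \RR^3$-valued map whose components are $\xi$ and $\mathcal{H}[g]$. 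The standing assumption $|\mathcal{T}^\tau_{\xi,\mathcal{H}}(P^0)| \leq R$ for all $\tau \in [0,T]$ means precisely that the whole trajectory remains in the closed ball $B_R$.

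First I would establish a time-uniform and $T$-independent bound on the velocity field along the trajectory. Since $P(\tau) = (X(\tau), W(\tau)) \in B_R$, we have $|X(\tau)|, |W(\tau)| \leq R$, so the growth estimate \eqref{xi_growth} gives $|\xi(\tau, X(\tau), W(\tau))| \leq C_{\xi}(1 + 2R)$, while \eqref{H_growth} gives $|\mathcal{H}[g](\tau, X(\tau), W(\tau))| \leq C_{\mathcal{H}}$ because the argument lies in $B_R$. These two bounds combine to
\begin{equation*}
    |\Psi_{\xi,\mathcal{H}}(\tau, P(\tau))| \leq C_{\xi}(1 + 2R) + C_{\mathcal{H}} =: C,
\end{equation*}
with $C$ depending only on $R$, $C_{\xi}$ and $C_{\mathcal{H}}$, and notably not on $T$, which matches the claimed dependence.

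Then the conclusion follows from the fundamental theorem of calculus. For any $s,t \in [0,T]$, say $s \leq t$, integrating the characteristic equation yields
\begin{equation*}
    \mathcal{T}^t_{\xi,\mathcal{H}}(P^0) - \mathcal{T}^s_{\xi,\mathcal{H}}(P^0) = P(t) - P(s) = \int_s^t \Psi_{\xi,\mathcal{H}}(\tau, P(\tau)) \,\rd\tau,
\end{equation*}
and taking norms together with the uniform bound above gives $|\mathcal{T}^t_{\xi,\mathcal{H}}(P^0) - \mathcal{T}^s_{\xi,\mathcal{H}}(P^0)| \leq C|t-s|$, as desired.

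There is no genuine obstacle in this argument; it is a routine a priori estimate resting on the existence theory of Lemma~\ref{XtWt} and the linear-growth and $L^\infty$ hypotheses. The only point requiring minor care is to derive the bound on $\Psi_{\xi,\mathcal{H}}$ \emph{directly} from the growth hypotheses \eqref{xi_growth} and \eqref{H_growth}, rather than routing it through the constant $C_P$ of Lemma~\ref{RCE}(i), so as to keep $C$ independent of $T$ exactly as the statement asserts.
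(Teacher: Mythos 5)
Your proof is correct and follows exactly the paper's own (one-line) argument: the paper proves this lemma as "a direct consequence of the definition of $\mathcal{T}^t_{\xi,\mathcal{H}}(P^0)$ and point (ii) of Hypothesis~\ref{Hypo_xi} and point (ii) of Hypothesis~\ref{Hypo_H}," which is precisely your integration of the characteristic ODE combined with the growth bounds \eqref{xi_growth} and \eqref{H_growth} on the ball $B_R$. Your spelled-out version, including the care to invoke the hypotheses directly rather than the constant $C_P$ of Lemma~\ref{RCE}, is a faithful expansion of what the paper leaves implicit.
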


\begin{proof}
    This is a direct consequence of the definition of $\mathcal{T}^t_{\xi,\mathcal{H}}(V^0)$ and the point (ii) in the Hypothesis \ref{Hypo_xi} and point (ii) in the Hypothesis \ref{Hypo_H}.
\end{proof}

\begin{lemma}[Regularity of characteristics with respect to initial condition]\label{CR_initial}
    For any $T > 0$, assume that the field $\xi(t,x,w)$ satisfies the Hypothesis~\ref{Hypo_xi} and the operator $\mathcal{H}(t,x,w)$ satisfies the Hypothesis~\ref{Hypo_H}. For two initial conditions $V^0_1, V^0_2 \in \RR^3 \times \RR^3$ and $R > 0$ such that
    \begin{equation*}
        |\mathcal{T}^t_{\xi,\mathcal{H}}(V^0_1)| \leq R, \quad  |\mathcal{T}^t_{\xi,\mathcal{H}}(V^0_2)| \leq R, \quad  \forall \ t \in [0,T]\,,
    \end{equation*}
   it holds that 
    \begin{equation*}
        |\mathcal{T}^t_{\xi,\mathcal{H}}(V^0_1) - \mathcal{T}^t_{\xi,\mathcal{H}}(V^0_2)| \leq |V^0_1 - V^0_2| \e^{t L_V}, \quad \forall \ s,t \in [0,T],
    \end{equation*}
    where the constant $L_V > 0$ depends only on $R$, $C_{\xi}$ and $C_{\mathcal{H}}$.
\end{lemma}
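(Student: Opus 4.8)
The plan is to run the same Gronwall argument that powers Lemma~\ref{DCE}, but in the complementary regime: here the field $\xi$ and the operator $\mathcal{H}$ are held fixed and only the initial data vary, so the ``parameter-difference'' terms $|L_{\xi_1}-L_{\xi_2}|R$ and $\|\mathcal{H}_1-\mathcal{H}_2\|_{L^\infty(B_R)}$ that appear in Lemma~\ref{DCE} are absent, and only the initial discrepancy $|P^0_1-P^0_2|$ survives to be amplified by the flow. Concretely, I would set $P_i(t):=\mathcal{T}^t_{\xi,\mathcal{H}}(P^0_i)=(X_i(t),W_i(t))$ for $i=1,2$; by Lemma~\ref{XtWt} each $P_i$ is the unique $C^1$ solution of the characteristic system Eqs.~\eqref{XWG}, i.e.\ of $\tfrac{\rd}{\rd t}P_i=\Psi_{\xi,\mathcal{H}}(t,P_i)$ with $P_i(0)=P^0_i$.

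The first step is to pass to the integral formulation, writing $P_i(t)=P^0_i+\int_0^t \Psi_{\xi,\mathcal{H}}(\tau,P_i(\tau))\,\rd\tau$, and then subtract the two identities to obtain
\begin{equation*}
    |P_1(t)-P_2(t)| \leq |P^0_1-P^0_2| + \int_0^t \big|\Psi_{\xi,\mathcal{H}}(\tau,P_1(\tau))-\Psi_{\xi,\mathcal{H}}(\tau,P_2(\tau))\big|\,\rd\tau .
\end{equation*}
The second step invokes the local Lipschitz bound of Lemma~\ref{RCE}(ii): the standing assumption $|\mathcal{T}^t_{\xi,\mathcal{H}}(P^0_i)|\leq R$ for all $t\in[0,T]$ and $i=1,2$ guarantees that both trajectories remain inside the closed ball $B_R$, so the single Lipschitz constant $L_P=L_P(R,T,L_\xi,L_{\mathcal{H}})$ applies simultaneously to both arguments and yields $|\Psi_{\xi,\mathcal{H}}(\tau,P_1)-\Psi_{\xi,\mathcal{H}}(\tau,P_2)|\leq L_P|P_1(\tau)-P_2(\tau)|$ along the trajectories. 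This reduces the inequality to $|P_1(t)-P_2(t)|\leq |P^0_1-P^0_2| + L_P\int_0^t|P_1(\tau)-P_2(\tau)|\,\rd\tau$.

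The final step is a direct application of Gronwall's inequality, which gives exactly $|P_1(t)-P_2(t)|\leq |P^0_1-P^0_2|\,\e^{tL_P}$ for all $t\in[0,T]$, as claimed. I do not expect a genuine obstacle here, since this is the textbook continuous-dependence estimate for ODEs; the only point requiring care—and the reason the a priori confinement hypothesis $|\mathcal{T}^t_{\xi,\mathcal{H}}(P^0_i)|\leq R$ is stated—is that the Lipschitz constant in Hypothesis~\ref{Hypo_xi}(iii) and Lemma~\ref{RCE}(ii) is only \emph{local}, so one must know in advance that both characteristics stay in the common compact set $B_R$ before the uniform constant $L_P$ can legitimately be used inside the integral.
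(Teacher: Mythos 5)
Your proposal is correct and follows essentially the same route as the paper's own proof: write both trajectories as solutions of the characteristic system, subtract in integral form, apply the local Lipschitz bound of Lemma~\ref{RCE}(ii) (valid because the a priori confinement in $B_R$ keeps both trajectories in a common compact set), and conclude by Gronwall's inequality. Your remark about why the confinement hypothesis is needed for the local Lipschitz constant $L_P$ to apply uniformly is exactly the right point of care, and nothing further is missing.
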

\begin{proof}
    We denote $V_i(t) = \mathcal{T}^t_{\xi,\mathcal{H}}(V^0_i) = (X_i(t), W_i(t))$ for $i=1,2$ and $t \in [0,T]$. These functions satisfy the characteristic system Eqs.~\eqref{XWG}: for $i=1,2$, 
    \begin{equation*}
		\left\{
		\begin{aligned}
			\frac{\rd }{\rd t} V_i(t) =&\ \Psi_{\xi,\mathcal{H}}(t,V_i(t)),\\[6pt]
			V_i(0) =&\ V^0_i.
		\end{aligned}
		\right.
    \end{equation*}
    Hence, by Lemma \ref{RCE}, we have
    \begin{equation*}
    \begin{split}
        |V_1(t) - V_2(t)| \leq& \ |V^0_1 - V^0_2| + \int_0^t \big|\Psi_{\xi,\mathcal{H}}(t,V_1(\tau)) - \Psi_{\xi,\mathcal{H}}(t,V_2(\tau)) \big| \,\rd \tau \\
        \leq & \ |V^0_1 - V^0_2| + L_V  \int_0^t \big|V_1(\tau) - V_2(\tau) \big| \,\rd \tau\,.
    \end{split}
    \end{equation*}
    Then the Gronwall inequality leads to
    \begin{equation*}
        |V_1(t) - V_2(t)| \leq |V^0_1 - V^0_2| \e^{t L_V}.
    \end{equation*}
    In other words, $\mathcal{T}^t_{\xi,\mathcal{H}}$ is actually Lipschitz continuous on the ball $B_R \subset \RR^3 \times \RR^3$ with the associated Lipschitz constant ${\rm{Lip}}_R\big[\mathcal{T}^t_{\xi,\mathcal{H}}\big] \leq \e^{tL_V}$ for $t \in [0,T]$.
\end{proof}




\subsection{Some preliminary lemmas}

This subsection is dedicated to presenting some preliminary lemmas and hypotheses that will be utilized to establish the well-posedness of Eq.~\eqref{P1UR}.
The first part focuses on the transport of probability measures along the characteristic trajectory, as demonstrated in the previous subsection.

\begin{lemma}\label{Lemma3.11}\cite[Lemma 3.11]{CCR2011}
    Let $V_1, V_2: \RR^3 \rightarrow \RR^3$ be two Borel measurable functions, and let $g \in \mathcal{V}_1(\RR^3)$. Then, 
    \begin{equation*}
        W_1(V_1 \# g, V_2 \# g) \leq \| V_1 - V_2\|_{L^{\infty}({\rm{supp}}g)}.
    \end{equation*}
\end{lemma}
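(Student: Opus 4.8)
The plan is to prove this directly from the Kantorovich--Rubinstein dual characterization of $W_1$ recorded in the paper's definition, together with the elementary change-of-variables formula for pushforward measures. Since the distance is expressed as a supremum over $1$-Lipschitz test functions, it suffices to estimate $\big|\int \varphi \, \rd(P_1 \# g) - \int \varphi \, \rd(P_2 \# g)\big|$ uniformly in $\varphi$ with $\|\varphi\|_{\rm Lip} \leq 1$, and then take the supremum.

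First I would fix such a $\varphi$ and invoke the defining property of the pushforward, namely $\int \varphi \, \rd(P_i \# g) = \int \varphi(P_i(x)) \, \rd g(x)$ for $i = 1, 2$, which holds for any Borel measurable $P_i$ and any bounded continuous (hence Lipschitz) $\varphi$. Subtracting the two identities, the quantity of interest becomes $\big|\int [\varphi(P_1(x)) - \varphi(P_2(x))] \, \rd g(x)\big|$, so the whole estimate is reduced to a pointwise bound on the integrand.

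Next I would estimate that integrand using the $1$-Lipschitz property: for $g$-a.e.\ $x \in {\rm supp}\, g$ one has $|\varphi(P_1(x)) - \varphi(P_2(x))| \leq |P_1(x) - P_2(x)| \leq \|P_1 - P_2\|_{L^{\infty}({\rm supp}\, g)}$. Integrating against $g$ and using that $g$ is a probability measure of total mass $1$ gives $\big|\int \varphi \, \rd(P_1 \# g) - \int \varphi \, \rd(P_2 \# g)\big| \leq \|P_1 - P_2\|_{L^{\infty}({\rm supp}\, g)}$. Taking the supremum over all admissible $\varphi$ yields the claim.

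I do not expect a genuine obstacle here: the heart of the argument is a one-line Lipschitz estimate once the dual formula and the change of variables are in place. The only points needing minor care are the measurability and integrability bookkeeping, namely confirming that $P_i \# g \in \mathcal{P}_1$ so that $W_1$ is finite and well defined, and justifying the change-of-variables identity for merely Borel (rather than continuous) maps $P_i$. As a cross-check one may instead argue via the coupling definition of $W_1$: the map $x \mapsto (P_1(x), P_2(x))$ pushes $g$ forward to a transport plan $\pi$ with marginals $P_1 \# g$ and $P_2 \# g$, whence $W_1(P_1 \# g, P_2 \# g) \leq \int |y - z| \, \rd\pi(y,z) = \int |P_1(x) - P_2(x)| \, \rd g(x) \leq \|P_1 - P_2\|_{L^{\infty}({\rm supp}\, g)}$, recovering the same bound.
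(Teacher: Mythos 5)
Your proof is correct. Note that the paper itself states this lemma without proof, quoting it directly from the cited reference \cite{CCR2011}; your argument via the Kantorovich--Rubinstein dual formula (pushforward change of variables, then the pointwise $1$-Lipschitz estimate $|\varphi(P_1(x))-\varphi(P_2(x))|\leq \|P_1-P_2\|_{L^{\infty}(\mathrm{supp}\, g)}$ for $g$-a.e.\ $x$, using that a Borel probability measure on $\RR^3$ gives full mass to its support) is exactly the standard proof of that result, and your coupling argument via $x\mapsto (P_1(x),P_2(x))$ is an equally valid alternative.
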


\begin{lemma}\label{Lemma3.13}\cite[Lemma 3.13]{CCR2011}
Take a locally Lipschitz map $\mathcal{T}: \RR^3 \rightarrow \RR^3 $ and $f,g \in \mathcal{P}_1(\RR^3)$ with compact support contained in the ball $B_R$. Then,
\begin{equation*}
    W_1(\mathcal{T}\# f, \mathcal{T}\# g) \leq L W_1(f,g).
\end{equation*}
where $L$ is the Lipschitz constant of $\mathcal{T}$ on the ball $B_R$.
\end{lemma}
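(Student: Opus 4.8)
The plan is to use the Kantorovich--Rubinstein dual characterization of $W_1$ supplied in the definition, together with the elementary fact that precomposition with a Lipschitz map multiplies Lipschitz constants. Starting from
\[
W_1(\mathcal{T}\# f, \mathcal{T}\# g) = \sup_{\|\varphi\|_{\rm{Lip}} \leq 1} \left| \int_{\RR^3} \varphi \, \rd(\mathcal{T}\# f) - \int_{\RR^3} \varphi \, \rd(\mathcal{T}\# g) \right|,
\]
I would first invoke the defining property of the pushforward, $\int \varphi \, \rd(\mathcal{T}\# f) = \int (\varphi\circ\mathcal{T}) \, \rd f$, to transfer each integral onto $f$ and $g$ themselves. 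This reduces the claim to comparing $\int (\varphi\circ\mathcal{T}) \, \rd(f-g)$ with $W_1(f,g)$ for an arbitrary $1$-Lipschitz $\varphi$.

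The key observation is that $\varphi\circ\mathcal{T}$ is Lipschitz on $B_R$ with constant at most $L$: for $x,y \in B_R$, since $\|\varphi\|_{\rm{Lip}} \le 1$ and $\mathcal{T}$ has Lipschitz constant $L$ on $B_R$,
\[
|\varphi(\mathcal{T}(x)) - \varphi(\mathcal{T}(y))| \le |\mathcal{T}(x) - \mathcal{T}(y)| \le L |x - y|,
\]
so $\tfrac{1}{L}(\varphi\circ\mathcal{T})$ is $1$-Lipschitz on $B_R$. The subtle point, and the step I expect to require the most care, is that $\mathcal{T}$ is only \emph{locally} Lipschitz, so $\varphi\circ\mathcal{T}$ need not be globally Lipschitz and is thus not immediately an admissible test function in the dual formula for $W_1(f,g)$. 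I would resolve this by a McShane--Whitney extension: extend $\tfrac{1}{L}(\varphi\circ\mathcal{T})$ from $B_R$ to a globally $1$-Lipschitz function $\psi$ on $\RR^3$ agreeing with $\tfrac{1}{L}(\varphi\circ\mathcal{T})$ on $B_R$ (for instance $\psi(x) = \inf_{s\in B_R}\big(\tfrac{1}{L}\varphi(\mathcal{T}(s)) + |x-s|\big)$). Because ${\rm{supp}}\, f \cup {\rm{supp}}\, g \subset B_R$, the integral $\int \psi \, \rd(f-g)$ is unaffected by the extension.

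Assembling the pieces, for each admissible $\varphi$,
\[
\left| \int (\varphi\circ\mathcal{T}) \, \rd(f-g) \right| = L \left| \int \psi \, \rd(f-g) \right| \le L\, W_1(f,g),
\]
where the inequality holds because $\psi$ is $1$-Lipschitz and hence admissible in the dual formula defining $W_1(f,g)$. Taking the supremum over all $\varphi$ with $\|\varphi\|_{\rm{Lip}} \le 1$ yields $W_1(\mathcal{T}\#f, \mathcal{T}\#g) \le L\, W_1(f,g)$, as claimed. As an alternative one could argue on the primal side: take a near-optimal coupling $\pi$ of $f$ and $g$ (a probability measure on $\RR^3\times\RR^3$ with marginals $f$ and $g$), whose support lies in $B_R \times B_R$, and push it forward by $\mathcal{T}\times\mathcal{T}$ to obtain a coupling of $\mathcal{T}\#f$ and $\mathcal{T}\#g$; then $W_1(\mathcal{T}\#f,\mathcal{T}\#g) \le \int |\mathcal{T}(x)-\mathcal{T}(y)| \, \rd\pi \le L \int |x-y|\,\rd\pi$, and optimizing over $\pi$ gives the bound, trading the extension argument for the coupling form of $W_1$.
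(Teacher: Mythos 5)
Your proof is correct. Note that the paper itself offers no proof of this statement: it is imported verbatim from \cite[Lemma 3.13]{CCR2011}, so there is no in-paper argument to compare against; your write-up actually supplies what the paper leaves as a citation. Both of your routes are sound. The dual argument correctly identifies the one genuinely delicate point: since $\mathcal{T}$ is only \emph{locally} Lipschitz, $\varphi\circ\mathcal{T}$ is $L$-Lipschitz only on $B_R$ and is not directly admissible in the Kantorovich--Rubinstein supremum, and your McShane--Whitney extension $\psi(x)=\inf_{s\in B_R}\bigl(\tfrac{1}{L}\varphi(\mathcal{T}(s))+|x-s|\bigr)$, combined with the fact that $f-g$ charges only $B_R$, closes that gap cleanly. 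The coupling alternative is equally valid and arguably more economical, since pushing a (near-)optimal coupling forward by $\mathcal{T}\times\mathcal{T}$ only ever evaluates $\mathcal{T}$ on $\mathrm{supp}\,f\times\mathrm{supp}\,g\subset B_R\times B_R$, so the local-Lipschitz issue never arises; its only extra ingredient is the Kantorovich duality linking the coupling formulation to the dual formula the paper uses as its definition of $W_1$.
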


\begin{lemma}[Continuity with respect to time]\label{con_t_g}
    For any $T > 0$, assume that the field $\xi(t,x,w)$ satisfies the Hypothesis~\ref{Hypo_xi} and the operator $\mathcal{H}(t,x,w)$ satisfies the Hypothesis~\ref{Hypo_H}.\\
    For any probability measure $g \in \mathcal{P}_c(\RR^3 \times \RR^3)$ with compact support in the ball $B_R$, there exists $C >0$ depending only on $R$, $C_{\xi}$ and $C_{\mathcal{H}}$ such that, for any $t,s \in [0,T]$,
    \begin{equation*}
        W_1 \Big(\mathcal{T}^t_{\xi,\mathcal{H}}\# g, \mathcal{T}^s_{\xi,\mathcal{H}}\#g\Big) \leq C |t-s|,
    \end{equation*}
    where $\mathcal{T}^t_{\xi,\mathcal{H}}$ is defined as in \eqref{DefT}.
\end{lemma}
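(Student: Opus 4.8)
The plan is to reduce the statement directly to the pointwise temporal regularity of the characteristic flow established in Lemma~\ref{CR_time}, using the elementary fact that the Monge--Kantorovich distance between two pushforwards of the \emph{same} measure is controlled by the sup-norm distance of the two transport maps restricted to the support. That fact is precisely Lemma~\ref{Lemma3.11}, so the proof will be a short composition of two previously established lemmas rather than a fresh estimate.

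First I would fix $g \in \mathcal{P}_c(\RR^3\times\RR^3)$ with $\operatorname{supp} g \subset B_R$ and apply Lemma~\ref{Lemma3.11} with the two Borel maps $P_1 = \mathcal{T}^t_{\xi,\mathcal{H}}$ and $P_2 = \mathcal{T}^s_{\xi,\mathcal{H}}$, which gives
\begin{equation*}
    W_1\big(\mathcal{T}^t_{\xi,\mathcal{H}}\# g, \mathcal{T}^s_{\xi,\mathcal{H}}\# g\big) \leq \big\| \mathcal{T}^t_{\xi,\mathcal{H}} - \mathcal{T}^s_{\xi,\mathcal{H}} \big\|_{L^\infty(\operatorname{supp} g)}.
\end{equation*}
It then suffices to bound the right-hand side uniformly over initial points $P^0 \in \operatorname{supp} g$. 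The next step is to supply the hypothesis required by Lemma~\ref{CR_time}, namely that each trajectory remains inside a fixed ball on $[0,T]$. This is exactly what Lemma~\ref{XtWt} delivers: for $P^0 \in B_R$ the at-most-linear growth of $\xi$ together with the boundedness of $\mathcal{H}$ on balls yields $|\mathcal{T}^\tau_{\xi,\mathcal{H}}(P^0)| \leq R'$ for all $\tau \in [0,T]$, with $R'$ depending only on $R$, $T$, $C_\xi$ and $C_\mathcal{H}$. With this enlarged radius $R'$ in hand, Lemma~\ref{CR_time} furnishes $|\mathcal{T}^t_{\xi,\mathcal{H}}(P^0) - \mathcal{T}^s_{\xi,\mathcal{H}}(P^0)| \leq C|t-s|$ for every $P^0$, where $C = C(R', C_\xi, C_\mathcal{H}) = C(R,T,C_\xi,C_\mathcal{H})$; taking the supremum over $P^0 \in \operatorname{supp} g$ and inserting the result into the display above closes the argument.

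The only point demanding care—there is no genuine obstacle—is the bookkeeping of radii: I must verify that the ball $B_{R'}$ controlling all the trajectories is uniform in $t\in[0,T]$ and independent of the particular starting point $P^0 \in \operatorname{supp} g$, so that the final constant $C$ depends solely on $R$, $T$, $C_\xi$ and $C_\mathcal{H}$ and not on the measure $g$ itself. Since $R'$ is obtained from the worst case $|P^0| = R$ in the Gronwall estimate of Lemma~\ref{XtWt}, this uniformity is automatic, and no additional hypothesis on $g$ beyond compact support in $B_R$ is needed.
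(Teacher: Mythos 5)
Your proposal is correct and follows essentially the same route as the paper's own proof: Lemma~\ref{Lemma3.11} reduces the $W_1$ bound to the sup-norm distance of the two flow maps on $\operatorname{supp} g$, and Lemma~\ref{CR_time} gives the Lipschitz-in-time bound on that distance. The only difference is that you explicitly verify the confinement hypothesis of Lemma~\ref{CR_time} via Lemma~\ref{XtWt} (with the enlarged radius $R'$), a bookkeeping step the paper's proof leaves implicit; this is a welcome bit of extra care, not a different argument.
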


\begin{proof}
    Thanks to the Lemma \ref{Lemma3.11}, and the Lemma \ref{CR_time} about the continuity of characteristics with respect to time, we have
    \begin{equation*}
        W_1 \Big(\mathcal{T}^t_{\xi,\mathcal{H}}\# g(\cdot,\cdot), \mathcal{T}^s_{\xi,\mathcal{H}}\#g(\cdot,\cdot) \Big) \leq \|\mathcal{T}^t_{\xi,\mathcal{H}} - \mathcal{T}^s_{\xi,\mathcal{H}} \|_{L^{\infty}(\text{supp}f)} \leq C|t-s|,
    \end{equation*}
    where the constant $C > 0$ depends on $R$, $C_{\xi}$ and $C_{\mathcal{H}}$ as in the Lemma \ref{CR_time}
\end{proof}


Additionally, it is worth mentioning that the operator $\mathcal{H}g$ is constructed as $\mathcal{H}g = Eg - \eta(t,w) = -\nabla U*\rho_g(t,X) - A(I+tA)^{-1} W$. In order for $\mathcal{H}g$ to sufficiently satisfy Hypothesis \ref{Hypo_H}, we will refer to the following hypothesis and lemmas concerning $E[g]$ and $U$.

\begin{hypothesis}{\cite[Hypothesis 3.1]{CCR2011}}\label{Hypo_E}
    (i) $E(t,x)$ is continuous on $[0,T] \times \RR^3$.\\
    (ii) For some $C_E > 0$,
    \begin{equation}\label{E_growth}
        |E(t,x)| \leq C_E (1+|x|), \quad \forall t,x \in [0,T] \times \RR^3.
    \end{equation}
    (iii) $E$ is locally Lipschitz with respect to $x$, i.e., for any compact support set $D \subset \RR^3$, there is $L_D$ such that
    \begin{equation*}
        |E(t,x) - E(t,y)| \leq L_D|x-y|, \quad \text{for}\ t \in [0,T] \ \text{and} \ x,y \in D.
    \end{equation*}
\end{hypothesis}

More particularly, since $E(t,x)$ takes the form $E(t,x) = E[g](t,x) = \nabla U * \rho_g (t,x)$, we have the following properties.

\begin{lemma}{\cite[Lemma 3.14]{CCR2011}}
    Consider a potential $U \in C^1: \RR^3 \rightarrow \RR$ such that $\nabla U$ is locally Lipschitz and there is some constant $C>0$,
    \begin{equation*}
        |\nabla U(x)| \leq C (1+|x|), \quad \forall x \in \RR^3\,.
    \end{equation*}
    Let $g \in \mathcal{P}_1(\RR^3 \times \RR^3)$ be a probability measure with support in a ball $B_R$. Then,
    \begin{equation*}
        \| E[g] \|_{L^\infty(B_R)} \leq \|\nabla U\|_{L^\infty(B_{2R})},
    \end{equation*}
    and 
    \begin{equation*}
        {\rm{Lip}}_{R}(E[g]) \leq {\rm{Lip}}_{2R}(\nabla U).
    \end{equation*}
\end{lemma}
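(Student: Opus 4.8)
The plan is to estimate the convolution $E[g](x) = \nabla U * \rho_g(x) = \int_{\RR^3} \nabla U(x-y)\, \rho_g(\rd y)$ pointwise, where $\rho_g$ is the spatial marginal of $g$ obtained by integrating out the $w$-variable. Two structural facts drive the argument: first, $\rho_g$ is itself a probability measure on $\RR^3$, so $\int_{\RR^3} \rho_g(\rd y) = 1$; second, its support is contained in the projection of $\mathrm{supp}(g) \subset B_R$ onto the $x$-space, hence in $B_R \subset \RR^3$. All the estimates below are direct applications of the triangle inequality together with this normalization and support bookkeeping.

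For the sup-norm bound I would fix any $x \in B_R$ and note that for every $y \in \mathrm{supp}(\rho_g) \subset B_R$ the triangle inequality gives $|x-y| \leq |x| + |y| \leq 2R$, so the argument $x-y$ of $\nabla U$ always lies in $B_{2R}$. Consequently $|E[g](x)| \leq \int_{\RR^3} |\nabla U(x-y)|\, \rho_g(\rd y) \leq \|\nabla U\|_{L^\infty(B_{2R})} \int_{\RR^3} \rho_g(\rd y) = \|\nabla U\|_{L^\infty(B_{2R})}$, which is the first claim. For the Lipschitz estimate I would take $x_1, x_2 \in B_R$ and write $E[g](x_1) - E[g](x_2) = \int_{\RR^3} \big( \nabla U(x_1-y) - \nabla U(x_2-y) \big)\, \rho_g(\rd y)$. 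Since $y \in B_R$ forces both $x_1 - y$ and $x_2 - y$ into $B_{2R}$, the local Lipschitz hypothesis on $\nabla U$ yields $|\nabla U(x_1-y) - \nabla U(x_2-y)| \leq \mathrm{Lip}_{2R}(\nabla U)\, |x_1 - x_2|$ uniformly in $y$; pulling this constant out and invoking the unit mass of $\rho_g$ once more gives $|E[g](x_1) - E[g](x_2)| \leq \mathrm{Lip}_{2R}(\nabla U)\, |x_1 - x_2|$, hence $\mathrm{Lip}_R(E[g]) \leq \mathrm{Lip}_{2R}(\nabla U)$.

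There is no genuine obstacle here: the whole proof is a two-line estimate once the geometry is set up. The only point that requires mild care is the \emph{ball-doubling} bookkeeping — tracking that convolving against a measure supported in $B_R$ moves the relevant arguments of $\nabla U$ from $B_R$ into $B_{2R}$ — combined with the normalization $\int \rho_g = 1$ inherited from $g \in \mathcal{P}_1(\RR^3 \times \RR^3)$. It is worth remarking that the linear growth hypothesis $|\nabla U(x)| \leq C(1+|x|)$ is not actually invoked for either of these two bounds; the compactness of the support does all the work, and the growth condition is only needed to guarantee that $E[g]$ is well-defined and satisfies the growth estimate of Hypothesis~\ref{Hypo_E}(ii) for measures without compact support.
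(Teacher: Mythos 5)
Your proof is correct: the paper states this lemma without proof, citing \cite[Lemma 3.14]{CCR2011}, and the argument given there is precisely your direct estimate — bound the convolution pointwise using that $\rho_g$ is a unit-mass measure supported in $B_R$, so that the argument of $\nabla U$ stays in $B_{2R}$. Your closing remark is also accurate: the linear growth bound on $\nabla U$ plays no role for compactly supported $g$ and is only needed to make $E[g]$ well-defined and to verify the growth condition of Hypothesis~\ref{Hypo_E} in the general setting.
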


\begin{lemma}{\cite[Lemma 3.15]{CCR2011}}
    For $g,h \in \mathcal{P}_1(\RR^3 \times \RR^3)$ and $ R>0$, it holds that
    \begin{equation*}
        \| E[g] - E[h] \|_{L^{\infty}(B_R)} \leq {\rm{Lip}}_{2R}(\nabla U) W_1(g,h).
    \end{equation*}
\end{lemma}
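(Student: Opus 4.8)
The plan is to reduce the estimate to the dual (Kantorovich--Rubinstein) characterization of $W_1$ that this paper takes as the very definition of $W_1$, combined with the local Lipschitz control of $\nabla U$ on $B_{2R}$. First I would write out the convolution: since $E[g](x) = \nabla U * \rho_g(x) = \int \nabla U(x-y)\,g(y,w)\,\rd y\,\rd w$ and similarly for $h$, one has, for every $x \in \RR^3$,
\begin{equation*}
    E[g](x) - E[h](x) = \int_{\RR^3 \times \RR^3} \nabla U(x-y)\,\big(g - h\big)(y,w)\,\rd y\,\rd w.
\end{equation*}
Thus the difference of the fields is the integral of the single (vector-valued) function $\psi_x(y,w) := \nabla U(x-y)$ against the signed measure $g-h$, where crucially $\psi_x$ depends only on the spatial slot $y$, not on $w$.

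Next I would fix $x \in B_R$ and control the Lipschitz constant of $\psi_x$ on the supports of $g,h$, which are contained in $B_R \subset \RR^3\times\RR^3$. For $|y_1|,|y_2|\le R$ we have $x-y_1,\,x-y_2 \in B_{2R}$, so
\begin{equation*}
    |\psi_x(y_1,w_1) - \psi_x(y_2,w_2)| = |\nabla U(x-y_1) - \nabla U(x-y_2)| \le {\rm{Lip}}_{2R}(\nabla U)\,|y_1-y_2|,
\end{equation*}
and since $|y_1-y_2| \le |(y_1,w_1)-(y_2,w_2)|$, the function $\psi_x$ is Lipschitz on $B_R$ with constant at most ${\rm{Lip}}_{2R}(\nabla U)$. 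To feed this into the dual definition of $W_1$, which tests against globally $1$-Lipschitz \emph{scalar} functions, I would pair $E[g](x)-E[h](x)$ with an arbitrary unit vector $e$: the scalar $(y,w)\mapsto e\cdot\nabla U(x-y)$ satisfies the same Lipschitz bound, so after extending it to a globally Lipschitz function with unchanged constant (a McShane extension, which does not alter the integral because $g-h$ is supported in $B_R$) and normalizing by ${\rm{Lip}}_{2R}(\nabla U)$, the duality gives $|e\cdot(E[g](x)-E[h](x))| \le {\rm{Lip}}_{2R}(\nabla U)\,W_1(g,h)$. Taking the supremum over unit $e$ and then over $x \in B_R$ yields the claim. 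Equivalently, one may bound $\int |\psi_x(P)-\psi_x(Q)|\,\rd\pi$ along an optimal coupling $\pi$ of $g$ and $h$, which handles the vector-valued $\psi_x$ directly.

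The one genuine subtlety, and the main obstacle, is reconciling the merely \emph{local} Lipschitz control of $\nabla U$ on $B_{2R}$ with the \emph{global} Lipschitz requirement in the definition of $W_1$. This is exactly where the confinement of the supports of $g,h$ to $B_R$ enters (as already imposed in the preceding lemma): it forces the transported mass, and all arguments $x-y$, to remain in $B_{2R}$, so that the single constant ${\rm{Lip}}_{2R}(\nabla U)$ suffices, and it guarantees that extending or truncating the test function outside $B_R$ neither changes the integral nor enlarges its Lipschitz constant. Everything else is routine. As an optional simplification, one may first push forward under the $1$-Lipschitz projection $(y,w)\mapsto y$ to obtain $W_1(\rho_g,\rho_h)\le W_1(g,h)$ and then run the entire argument with the spatial marginals $\rho_g,\rho_h$ on $\RR^3$.
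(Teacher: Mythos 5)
Your proof is correct, but note that there is nothing in the paper to compare it against: this lemma is imported verbatim, with a citation, from \cite[Lemma 3.15]{CCR2011}, and no proof is given here. What you wrote is essentially the standard argument from that reference: express $E[g](x)-E[h](x)$ as the integral of the kernel $\nabla U(x-\cdot)$ against $g-h$, observe that for $x\in B_R$ and arguments confined to $B_R$ this kernel is Lipschitz with constant ${\rm{Lip}}_{2R}(\nabla U)$, and conclude by Kantorovich--Rubinstein duality (the unit-vector pairing plus McShane extension, or more directly the optimal-coupling bound, which avoids the scalar/vector issue altogether). Your closing remark --- pushing forward under the $1$-Lipschitz projection $(y,w)\mapsto y$ to get $W_1(\rho_g,\rho_h)\le W_1(g,h)$ and then working on $\RR^3$ --- is exactly how the cited reference chains this estimate with its Lemma 3.13, quoted in this paper as Lemma \ref{Lemma3.13}.

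One point you handled well deserves emphasis: as transcribed in the paper, the statement omits the hypothesis ${\rm{supp}}\, g \cup {\rm{supp}}\, h \subset B_R$, and without it the claim is false. Indeed, take $\nabla U$ vanishing on $B_{2R}$ but nonconstant far away (still locally Lipschitz with linear growth); then the right-hand side is zero while the left-hand side can be positive if $g$ and $h$ place their mass far from the origin. You correctly reinstated this hypothesis from the preceding lemma and used it in precisely the two places where it is needed: to keep all arguments $x-y$ inside $B_{2R}$ so that the single constant ${\rm{Lip}}_{2R}(\nabla U)$ suffices, and to guarantee that extending the test function beyond $B_R$ changes neither the integral against $g-h$ nor the relevant Lipschitz constant. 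So your argument proves the corrected statement, which is the one actually invoked in the well-posedness proof of Theorem \ref{well-posed_P1UR}.
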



\section{Derivation of the mean-field limit using empirical measure}
\label{app:delta}

Here we provide an alternative derivation of the mean-field limit using empirical measures. 
Let 
	\begin{equation*}
		g^{N}(t,x,w) = \frac{1}{N} \sum_{i=1}^{N} \delta(w-w_{i}(t)) \delta(x-x_{i}(t))
	\end{equation*}
	be the empirical measure associated with $ N $ molecules, where $\delta$ is the Dirac delta function. Then for any suitable test function $ \p(x,w) $, we have that 
	\begin{equation*}
		\begin{split}
			&\frac{\rd}{\rd t} \left\langle g^{N}(t,x,w), \p(x,w) \right\rangle_{x,w}\\ 
			= & \frac{1}{N}\sum_{i=1}^{N} \frac{\rd}{\rd t} \p(x_i(t),w_{i}(t))\\
			= & \frac{1}{N}\sum_{i=1}^{N}\nabla_{x} \p(x_i(t),w_{i}(t)) \cdot \dot{x}_{i}(t) +  \frac{1}{N}\sum_{i=1}^{N}\nabla_{w} \p(x_i(t),w_{i}(t)) \cdot \dot{w}_{i}(t) \\
			= & \underbrace{\frac{1}{N}\sum_{i=1}^{N}\nabla_{x} \p(x_i(t),w_{i}(t)) \cdot \left[ w_{i}(t) + A(I+tA)^{-1}x_{i}(t) \right]}_{:=(I)}\\
           &\underbrace{-\frac{1}{N} \sum_{i=1}^{N} \nabla_{w} \p(x_i(t),w_{i}(t)) \cdot \frac{1}{N} \sum_{l=1}^{N} \nabla_{x}U(|x_{i}(t) - x_{l}(t)|)}_{:=(II)} \\
			& \underbrace{-\frac{1}{N}\sum_{i=1}^{N} \nabla_{w} \p(x_i(t),w_{i}(t)) \cdot A(I+tA)^{-1} w_{i}(t)}_{:=(III)}\,,
		\end{split}
	\end{equation*}
    where the dynamical system Eqs.~\eqref{simu_dynamic_xw_1N} about $(\dot{x}_{i}(t), \dot{w}_{i}(t))$ is substituted in the last equality above.

   For the first term $(I)$, we have 
   \begin{equation*}
      \begin{split}
          (I) =& \frac{1}{N}\sum_{i=1}^{N}\nabla_{x} \p(x_i(t),w_{i}(t)) \cdot \left[ w_{i}(t) + A(I+tA)^{-1}x_{i}(t) \right]\\
          =& \left\langle g^{N}(t,x,w),~~ \left[ w(t) + A(I+tA)^{-1}x(t) \right] \nabla_{x}\phi(x,w)\right\rangle_{x,w}. \\
      \end{split}
   \end{equation*}
   Similarly, the third term $(III)$ rewrites as
	\begin{equation*}
	    \begin{split}
	        (III) =& - \frac{1}{N}\sum_{i=1}^{N} \nabla_{w}\p(x_i(t),w_{i}(t)) \cdot A(I+tA)^{-1} w_{i}(t)\\
	        =& -\left\langle g^{N}(t,x,w), ~~ [A(I+tA)^{-1} w] \cdot \nabla_{w}\p(x,w) \right\rangle_{x,w}. 
	    \end{split}
	\end{equation*}
    The second term $(II)$ is a bit more involved:
	\begin{equation*}
		\begin{split}
			(II) =& - \frac{1}{N} \sum_{i=1}^{N} \left[\frac{1}{N} \sum_{j=1}^{N} \nabla_{x}U(|x_{i}(t) - x_{j}(t)|) \cdot \nabla_{w}\p(x_i(t), w_{i}(t))\right]\\[2pt]
			=& -\left\langle g^{N}(t,x,w),~~ \frac{1}{N} \sum_{j=1}^{N} \nabla_{x}U(|x - x_{j}(t)|) \cdot \nabla_{w} \varphi(x,w)\right\rangle_{x,w} \\[2pt]
			=& -\left\langle g^{N}(t,x,w), ~~ \left\langle \nabla_{x}U(|x - y|),~ \frac{1}{N} \sum_{l=1}^{N}\delta(y-x_{j}(t)) \right\rangle_{y} \cdot \nabla_{w} \varphi(x,w) \right\rangle_{x,w}\\[2pt]
			=& -\left\langle g^{N}(t,x,w), ~~ \left\langle \nabla_{x}U(|x - y|), \rho_{g^{N}}(t,y) \right\rangle_{y}(t,x) \cdot \nabla_{w} \varphi(x,w) \right\rangle_{x,w}\\[4pt]
			=& -\left\langle g^{N}(t,x,w), ~~ [\nabla_{x}U*\rho_{g^{N}}](t,x)  \cdot \nabla_{w}\varphi(x,w) \right\rangle_{x,w},\\
		\end{split}
	\end{equation*}
	where $\rho_{g^N}(t,y) := \int_{\RR^3} g^N(t,y,w) \,\rd w$.
 
    Combining all terms together, we obtain the following weak form of the evolution equation for $g^N$:
    \begin{multline*}
 \Big\langle \frac{\partial g^{N}(t,x,w)}{\partial t}  - [\nabla_{x}U*\rho_{g^{N}}](t,x,w) \cdot \nabla_{w}g^{N}(t,x,w) - \nabla_w \cdot [A(I+tA)^{-1} w g^{N}(t,x,w)] \\
    + \nabla_x \cdot \Big(\left[ w+ A(I+tA)^{-1}x \right]g^{N}(t,x,w)\Big) ,\ \p(x, w) \Big\rangle_{x,w}  = 0 \,.
    \end{multline*}
    
    In the strong form, it becomes
    \begin{multline*}
    \frac{\partial g^{N}(t,x,w)}{\partial t} + \left[ w+ A(I+tA)^{-1}x \right] \cdot \nabla_{x}g^{N}(t,x,w) -  [A(I+tA)^{-1} w] \cdot \nabla_w g^{N}(t,x,w)\\
     = [\nabla_{x}U*\rho_{g^{N}}](t,x,w) \cdot \nabla_{w}g^{N}(t,x,w).
    \end{multline*}

    Then, if further considering that $g^N$ is homogeneous in $x$, it reduces to 
    \begin{align*}
        \frac{\partial g^{N}(t,w)}{\partial t}  -  [A(I+tA)^{-1} w] \cdot \nabla_w g^{N}(t,w) = 0\,.
    \end{align*}
    since the non-linear term will vanish due to the symmetry of the potential $U$,
	\begin{equation*}
	    [\nabla_{x}U*\rho_{g^{N}}](t,x,w) =  \rho_{g^{N}}(t) \int_{\mathbb{R}^3} \nabla_{x}U(|x - y|) \,\rd y = 0.
	\end{equation*}

\bibliographystyle{siam}
\bibliography{Homoenergetic_bib.bib}

\end{document}